\newcommand{\rNum}[1]{\lowercase\expandafter{\romannumeral #1\relax}}
\tikzstyle{vecArrow} = [thick, decoration={markings,mark=at position
\tikzstyle{decision} = [diamond, draw, fill=white]
\tikzstyle{line} = [draw, -stealth, thick]
\tikzstyle{elli}=[draw, ellipse, fill=gray!20, minimum height=8mm, text width=5em, text centered]
\tikzstyle{block} = [draw, rectangle, fill=white, text width=8em, text centered, minimum height=15mm, node distance=10em]
\definecolor{Gray}{gray}{0.95}
\newbox\xrat@below
\newbox\xrat@above
\newcommand{\xrightarrowtail}[2][]{%
  \setbox\xrat@below=\hbox{\ensuremath{\scriptstyle #1}}%
  \setbox\xrat@above=\hbox{\ensuremath{\scriptstyle #2}}%
  \pgfmathsetlengthmacro{\xrat@len}{max(\wd\xrat@below,\wd\xrat@above)+.6em}%
  \mathrel{\tikz [>->,baseline=-.75ex]
                 \draw (0,0) -- node[below=-2pt] {\box\xrat@below}
                                node[above=-2pt] {\box\xrat@above}
                       (\xrat@len,0) ;}}
\newcommand{\xdbheadrightarrow}[2][]{%
  \ext@arrow 0099\xdbheadfill@{#1}{#2}}%
\newcommand{\xdbheadfill@}{%
  \arrowfill@\relbar\relbar{\mathrel{\vphantom{\rightarrow}\smash{\twoheadrightarrow}}}}
\newcommand{\Ho}{\mathrm{Ho}} 
\newcommand{\C}{\mathcal{C}} 
\newcommand{\D}{\mathcal{D}} 
\newcommand{\V}{\mathcal{V}}
\newcommand{\Sp}{\mathrm{Sp}} 
\newcommand{\Ch}{\mathrm{Ch}_{\mathbb{Z}}} 
\newcommand{\sAb}{\mathrm{sAb}} 
\newcommand{\Mod}{\text{-}\mathrm{mod}}  
\newcommand{\rmod}{\mathrm{mod}\text{-}}
\newcommand{\Alg}{\text{-}\mathrm{alg}}  
\newcommand{\A}{\mathcal{A}} 
\newcommand{\EML}{\mathrm{EML}} 
\newcommand{\s}{\mathbb{S}} 
\newcommand{\Hom}{\mathrm{Hom}}
\newcommand{\bbZ}{\mathbb{Z}} 
\newcommand{\E}{\mathcal{E}} 
\newcommand{\colim}{\mathrm{colim}}
\newcommand{\hend}{\mathrm{hEnd}}
\def\rin{\rotatebox[origin=c]{-90}{$\in$}}
\newcommand{\kos}[2]{{#1}/\!\!/{#2}} 
\newcommand{\tick}{\ding{51}}
\newcommand{\cross}{\ding{56}}
\newcommand{\tw}{\mathrm{Ch}^{(T,N)}(\A)}
\title{Levels of algebraicity in stable homotopy theories}
\author{Jocelyne Ishak}
\address[J. Ishak]{Edinburgh, UK}
\email{jocelyne\_ishak@hotmail.com}
\author{Constanze Roitzheim}
\address[C. Roitzheim]{University of Kent \\ School of Mathematics, Statistics and Actuarial Science\\ Sibson building \\ Canterbury, CT2 7FS, UK}
\email{c.roitzheim@kent.ac.uk}
\author{Jordan Williamson}
\address[J. Williamson]{Department of Algebra, Faculty of Mathematics and Physics, Charles University in Prague, Sokolovsk\'{a} 83, 186 75 Praha, Czech Republic}
\email{williamson@karlin.mff.cuni.cz}
\subjclass[2010]{}
\numberwithin{equation}{section}
\theoremstyle{theorem}
\newtheorem{thm}[equation]{Theorem}
\newtheorem{lemma}[equation]{Lemma}
\newtheorem{thm-defi}[equation]{Theorem-Definition}
\newtheorem{prop}[equation]{Proposition}
\newtheorem{cor}[equation]{Corollary}
\newtheorem*{thm*}{Theorem}
\newtheorem*{rigidity thm*}{Rigidity Theorem}
\newtheorem*{main thm*}{Main Theorem}
\newtheorem*{lemma*}{Lemma}
\newtheorem*{cor*}{Corollary}
\newtheorem{conj}[equation]{Conjecture}
\theoremstyle{definition}
\newtheorem{defi}[equation]{Definition}
\newtheorem*{conv*}{Conventions}
\newtheorem{exam}[equation]{Example}
\newtheorem{rem}[equation]{Remark}
\newtheorem*{remarks*}{Remarks}
\theoremstyle{remark}
\newcommand{\changelocaltocdepth}[1]{%
  \addtocontents{toc}{\protect\setcounter{tocdepth}{#1}}%
  \setcounter{tocdepth}{#1}%
}
\def\l@subsection{\@tocline{2}{0pt}{2.5pc}{5pc}{}}
\begin{document}
\subjclass[2020]{55P42, 55U35}
\begin{abstract}
We study several different notions of algebraicity in use in stable homotopy theory and prove implications between them. The relationships between the different meanings of algebraic are unexpectedly subtle, and we illustrate this with several interesting examples arising from chromatic homotopy theory.
\end{abstract}

\maketitle

{\small\tableofcontents}

\section{Introduction}
When studying problems of a homotopical nature, a useful strategy is to reduce to algebra and use insight there to make deductions in topology. Such a reduction often loses a lot of information, but in some cases, the homotopy theory of interest can be completely modelled algebraically. For example, Serre's theorem shows that the homotopy theory of rational spectra is equivalent to the category of graded $\mathbb{Q}$-vector spaces. As such, one might say that the category of rational spectra is algebraic. However, there are many different meanings of algebraic in use in the community. One goal of this project is to clarify the relationship between these different notions. 

In this paper, we study several definitions of algebraicity, give some alternative characterizations of these definitions, and moreover explore the relations between them. Furthermore, we illustrate our results with many examples of interest, arising primarily from chromatic homotopy theory.

\subsection*{Notions of algebraicity} 
Firstly we give an overview of the different definitions and what behaviour they seek to capture. We will discuss some examples later on in this introduction.

Given a stable model category $\C$, there are many levels on which one can measure how algebraic it is. The strictest notion is to require that it is algebraic up to Quillen equivalence; we say that a stable model category $\C$ is \emph{algebraic} if it is Quillen equivalent to a $\Ch$-enriched model category. This ensures that all higher homotopical information (for example, Toda brackets) is determined by algebraic data. If $\C$ has a compact generator, then using Morita theory~\cite{ss_module, dugger_spectral} and machinery from~\cite{ds_enriched}, one can establish that $\C$ is algebraic if and only if $\C$ is Quillen equivalent to modules over a DGA, see~\cref{thm:algebraic}.

This notion of algebraicity is relatively well established but also technically quite strong, 
so in order to get a richer, fuller picture we will weaken this notion to study other kinds of algebraicity. Instead of requiring that all higher homotopical information is determined by algebra, one can ask for only the triangulated structure of the homotopy category $\Ho(\C)$ to be. This amounts to asking that $\Ho(\C)$ is triangulated equivalent to the derived category of modules over a DGA (or a DG-category). If this is the case, we say that $\C$ is \emph{triangulated algebraic}. 

Any stable model category carries a homotopical enrichment in spectra~\cite{lenhardt2012stable}, and therefore associated to any two objects $X,Y \in \C$ there is a homotopy mapping spectrum $R\Hom(X,Y)$. One can ask for this homotopical enrichment to be determined in algebra; this amounts to asking that the action of $\Ho(\Sp)$ on $\Ho(\C)$ factors over the derived category $\mathsf{D}(\bbZ)$, see~\cref{prop:dz}. In this case, we say that $\C$ has a \emph{$\mathsf{D}(\bbZ)$-action.} This furthermore implies that 
$R\Hom(X,Y)$ is an $H\bbZ$-module for all $X,Y \in \C$. 

The action of $\Ho(\Sp)$ on $\Ho(\C)$ also induces an action of the stable homotopy groups of spheres. The less complex this action is, the closer to algebra one might consider $\C$. One can then examine the extremal case when the action of $\pi_{>0}(\mathbb{S})$ is trivial; in this case, we say that $\C$ has a \emph{trivial $\Ho(\Sp)$-action}. 

In the course of studying these notions, we give some alternative characterizations of them. Some of these characterizations have theoretical value in relating the different kinds of algebraicity as we discuss later in this introduction, but some also give valuable criteria for determining how algebraic a given example is. For example, if $\C$ has a single compact generator, we can test if a $\Ho(\Sp)$-action on $\Ho(\C)$ is trivial just on this compact generator, see \cref{prop:generators}. Furthermore, we examine to what extent the triviality of an action can be tested against only a small number of elements in $\pi_{>0}(\mathbb{S})$. We confirm this for a special case in \cref{thm:klocalcohen} and we conjecture that it holds more generally, see \cref{conj:actions}.

Our first main result gives some equivalent characterizations of the aforementioned definitions. For simplicity, in this introduction we only state the results for the category of modules over a ring spectrum, and we refer the reader to the relevant sections in this paper for further details and the more general statements.

\begin{thm*} \leavevmode
Let $R$ be a ring spectrum.
\begin{enumerate}[(i)]
\item (\ref{prop:algebraicEMLHZalg}) The category $R\Mod$ is algebraic if and only if $R$ is weakly equivalent to an $H\bbZ$-algebra as a ring spectrum. 
\item (\ref{prop:dz}) If the category $R\Mod$ has $\mathsf{D}(\bbZ)$-action, the homotopy mapping spectra \linebreak $\Hom_R(M,N)$ are $H\bbZ$-modules for all $M,N \in R\Mod$.
\item (\ref{thm:klocalcohen}) If $R$ is $E(1)$-local at an odd prime $p$, then $R\Mod$ has trivial $\Ho(\Sp)$-action if and only if $R \wedge^L \alpha_1 = 0$ where $\alpha_1 \in \pi_{2p-3}(L_1\mathbb{S})$ denotes the Hopf element.
\item (\ref{lem:nilpotence}) If $R$ is an element of a set of ring spectra which detects nilpotence in the sense of \cref{nilpotence}, then $R\Mod$ has trivial $\Ho(\Sp)$-action. 
\end{enumerate}
\end{thm*}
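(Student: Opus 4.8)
I would prove the four parts separately; part (i) is Morita-theoretic, (ii) is formal, and (iii)--(iv) share a common reduction fed by chromatic input. For part (i), the implication ``$\Leftarrow$'' is immediate: if $R$ is weakly equivalent as a ring spectrum to an $H\bbZ$-algebra $B$, then $R\Mod$ is Quillen equivalent to $B\Mod$, which is enriched in $H\bbZ\Mod$, and the latter is Quillen equivalent to $\Ch$ by Shipley's theorem, so $R\Mod$ is algebraic. For ``$\Rightarrow$'' the plan is to start from the characterization recalled in \cref{thm:algebraic}: since $R\Mod$ has the compact generator $R$, algebraicity means $R\Mod$ is Quillen equivalent to $A\Mod$ for some DGA $A$; applying Shipley's equivalence $\Ch\simeq H\bbZ\Mod$ and its multiplicative refinement produces a Quillen equivalence $R\Mod\simeq(HA)\Mod$ for the associated $H\bbZ$-algebra $HA$. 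Under this equivalence the generator $R$ corresponds to a compact generator $G$ of $(HA)\Mod$, and Morita theory~\cite{ss_module} identifies $R$ with the endomorphism ring spectrum $\End_{(HA)\Mod}(G)$. The remaining point is that the spectral enrichment of the $H\bbZ$-linear model category $(HA)\Mod$ refines compatibly to an $H\bbZ\Mod$-enrichment, so that $\End_{(HA)\Mod}(G)$ is naturally an $H\bbZ$-algebra; making this compatibility precise, via the machinery of \cite{ds_enriched}, is the main obstacle in this part.

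Part (ii) I expect to be formal. By the characterization of a $\mathsf{D}(\bbZ)$-action in \cref{prop:dz}, having one means the tensoring of $\Ho(\Sp)$ on $\Ho(R\Mod)$ factors, compatibly with the $\mathbb{S}$-action, through $\mathsf{D}(\bbZ)=\Ho(H\bbZ\Mod)$; that is, $R\Mod$ is homotopically tensored over $H\bbZ\Mod$. The homotopy mapping spectrum is characterized by $[E,\Hom_R(M,N)]_{\Sp}\cong[E\wedge M,N]_{R\Mod}$ naturally in $E$, so once the right-hand side depends on $E$ only through $H\bbZ\wedge E$, the enriched adjunction exhibits $\Hom_R(M,N)$ as the underlying spectrum of an $H\bbZ\Mod$-valued mapping object, i.e.\ as an $H\bbZ$-module. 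The only care needed is that this homotopical $H\bbZ\Mod$-tensoring genuinely produces an $H\bbZ\Mod$-enrichment representing the $\Sp$-enrichment of \cite{lenhardt2012stable}.

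For (iii) and (iv) I would first establish the common reduction: by \cref{prop:generators} the $\Ho(\Sp)$-action on $\Ho(R\Mod)$ is trivial iff it is trivial on the compact generator $R$; and since $R$ is a ring spectrum with unit $\eta\colon\mathbb{S}\to R$, the action of $\alpha\in\pi_n(\mathbb{S})$ on $R$ is the class $\eta_*(\alpha)\in\pi_n(R)=\Hom_{\Ho(R\Mod)}(\Sigma^nR,R)$, which vanishes iff $\eta_*(\alpha)=0$. Hence $R\Mod$ has trivial $\Ho(\Sp)$-action iff $\eta_*\colon\pi_{>0}(\mathbb{S})\to\pi_{>0}(R)$ is zero. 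For (iii), $R$ being $E(1)$-local at $p$ makes $\eta$ factor through $L_1\mathbb{S}$, so it suffices to kill $\eta_*$ on $\pi_{>0}(L_1\mathbb{S})$, and the hypothesis $R\wedge^L\alpha_1=0$ unwinds to $\eta_*(\alpha_1)=0$ in $\pi_{2p-3}(R)$. To propagate, I would use that, $p$ being odd, $\mathbb{S}/p$ is a homotopy ring spectrum carrying an Adams self-map realized by $v_1\in\pi_{2p-2}(\mathbb{S}/p)$, with $\alpha_k=\partial(v_1^k)$ for the boundary $\partial\colon\pi_*(\mathbb{S}/p)\to\pi_{*-1}(\mathbb{S})$, and that by the classical computation of the image of $J$ these classes together with the analogous self-maps of the $\mathbb{S}/p^n$ generate $\pi_{>0}(L_1\mathbb{S})$. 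Smashing with $R$, the ring map $i\colon\pi_*(R)\to\pi_*(R/p)$ sends a lift $x$ of $\bar v_1$ (which exists because $\eta_*(\alpha_1)=\partial(\bar v_1)=0$) to $\bar v_1$, so $\bar v_1^k=i(x^k)$ and $\eta_*(\alpha_k)=\partial(\bar v_1^k)=\partial i(x^k)=0$; an inductive elaboration with the ring spectra $R/p^n$, using the compatibility of the self-maps of $\mathbb{S}/p^n$ and $\mathbb{S}/p$ up to powers of $v_1$, then kills the classes of higher $p$-torsion, and $\eta_*$ vanishes on all of $\pi_{>0}(L_1\mathbb{S})$. The hard part here---and the reason the statement is confined to odd $p$, where $\mathbb{S}/p^n$ is a ring and $\alpha_1$ is the only relevant generator---is exactly this propagation through the tower of Moore spectra; an equivalent way to organize it is via the $E(1)$-local form of Cohen's theorem, that $\pi_{>0}(L_1\mathbb{S})$ is generated under iterated Toda brackets by $\alpha_1$, together with an inductive argument showing that a bracket whose entries have already been killed in $\pi_*(R)$ has image $\{0\}$ there.

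Finally, for (iv), the common reduction again leaves us to show $\eta_*\colon\pi_{>0}(\mathbb{S})\to\pi_{>0}(R)$ is zero. Since $\pi_{>0}(\mathbb{S})$ is all torsion by Serre's theorem, it suffices that the relevant Hurewicz-type map has torsion-free target in positive degrees, and membership of $R$ in a set detecting nilpotence in the sense of \cref{nilpotence} is what supplies this: concretely, such $R$ is complex-orientable, so $\eta$ factors as $\mathbb{S}\to MU\to R$ and $\pi_{>0}(\mathbb{S})\to\pi_{>0}(MU)$ is zero because $\pi_*(MU)$ is torsion-free. (Alternatively, Nishida's theorem gives that each $\eta_*(\alpha)$ is nilpotent, and one invokes \cref{nilpotence} to conclude that nilpotent positive-degree classes in $\pi_*(R)$ vanish.) The one thing to check is that the precise definition adopted in \cref{nilpotence} really does force the complex-orientability---or directly the vanishing of $\pi_{>0}(\mathbb{S})\to\pi_{>0}(R)$---used in this argument.
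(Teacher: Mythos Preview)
Parts (i) and (ii) follow the paper's arguments. In (iv) there is a gap: membership in a nilpotence-detecting set does not obviously force complex orientability, and the paper makes no such claim. The correct argument is essentially your parenthetical alternative, but applied to $\alpha$ itself rather than to $\eta_*(\alpha)$: by Nishida every $\alpha\in\pi_{>0}(\mathbb{S})$ is nilpotent \emph{as a self-map of the finite spectrum $\mathbb{S}$}, and the very definition of ``detects nilpotence'' (with $A=\mathbb{S}$, $f=\alpha$) then gives $R\wedge^L\alpha=0$ directly. Your phrasing ``nilpotent positive-degree classes in $\pi_*(R)$ vanish'' is not what \cref{nilpotence} provides---it concerns self-maps of finite spectra, not elements of $\pi_*(R)$.

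For (iii) your route genuinely differs from the paper's. Your lifting argument---pull $\bar v_1$ back to $x\in\pi_{2p-2}(R)$ using $\partial(\bar v_1)=\eta_*(\alpha_1)=0$, then use that $i\colon\pi_*(R)\to\pi_*(R/p)$ is a ring map to kill $\partial(\bar v_1^{\,k})=\eta_*(\alpha_k)$---is a nice idea and does handle the order-$p$ classes, but the extension to the higher $p$-power torsion in $\pi_{>0}(L_1\mathbb{S})$ is only gestured at. More importantly, your proposed alternative via an $E(1)$-local Cohen decomposition is exactly the approach the paper explicitly rules out in the remark following \cref{conj:actions}: even when every entry of a higher Toda bracket is annihilated by $X\wedge^L-$, the bracket can have nontrivial indeterminacy (the paper notes $\eta^2\in\langle 2,0,0,2\rangle$), so one cannot conclude $X\wedge^L\theta=0$. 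The paper's argument instead uses $v_1$-periodicity on $[M,\mathbb{S}]^{L_1\Sp}_*$ with $M=\mathbb{S}/p$: it verifies by hand, from the single hypothesis $X\wedge^L\alpha_1=0$, that $X\wedge^L-$ vanishes on these groups in the range $0\le i\le 2p-3$, and then propagates to all degrees using that precomposition with the $v_1$-self map (an $E(1)$-local equivalence on $M$) is an isomorphism.
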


One might wonder if it is enough to require that the ring $R$ is an $H\bbZ$-module in part (i) to detect algebraicity. We show in \cref{notHZalg} that this condition is not sufficient, by constructing a certain endomorphism ring spectrum which is an $H\bbZ$-module but not an $H\bbZ$-algebra. Moreover, we use the above theorem to study several examples from chromatic homotopy theory such as $K(n)\Mod$ and $MU\Mod$; we refer the reader to \cref{sec:examples} for the full details, and to \cref{table:examples} for a summary of the examples we treat. For now, we give a quick overview of some important motivating examples. 

The simplest (non-trivial) example one might consider is the category of modules over Morava $K$-theory $K(n)$ for some $0 < n < \infty$, see~\cref{sec:kn}. Since $K(n)_*$ is a graded field, the universal coefficient spectral sequence collapses showing that there is a triangulated equivalence $\Ho(K(n)\Mod) \simeq \mathsf{D}(K(n)_*)$. This shows that $K(n)\Mod$ is triangulated algebraic. However, this triangulated equivalence does not preserve higher homotopical information, and one can show that $K(n)\Mod$ is not algebraic, and also that it does not have a $\mathsf{D}(\bbZ)$-action. However, it does have a trivial $\Ho(\Sp)$-action by \cref{lem:nilpotence}. Moreover, we use an obstruction theory in the case of $K(1)$ to illustrate more directly why it does not have a $\mathsf{D}(\bbZ)$-action.

We study several other examples in detail too. In particular, we study the category $L_1\Sp$ of $E(1)$-local spectra and its exotic models, see Examples~\ref{ex:L1Sp},~\ref{ex:fr} and~\ref{ex:E1exotic}. If we work at an odd prime, then Franke~\cite{franke1996uniqueness} as well as Patchkoria and Pstr\k{a}gowski \cite{patchkoriapstragowski} have shown that the category of $E(1)$-local spectra has an exotic model; that is, a category which is triangulated equivalent to $\Ho(L_1\Sp)$ but for which there is no Quillen equivalence to $L_1\Sp$. We show that Franke's exotic model is algebraic in each of the senses described above. However, it is not known whether Franke's exotic model is unique; therefore, we also consider how algebraic a general exotic model for $L_1\Sp$ must be.

We also provide a detailed study of the category of modules over the endomorphism ring spectrum $\E = \Hom_{ku}(H\bbZ, H\bbZ)$ where $ku$ denotes the connective complex $K$-theory spectrum, see \cref{sec:kuHZ}.  We show that the ring spectrum $\E$ is an $H\bbZ$-module, but it is not an $H\bbZ$-algebra, nor is it weakly equivalent to an $H\bbZ$-algebra as a ring spectrum. 
This result is the key to unlocking further counterexamples; for instance, the category of $\E$-modules has a $\mathsf{D}(\bbZ)$-action, but is not algebraic. 

\subsection*{Relating the different notions}
As demonstrated by the examples discussed above, there is a surprising amount of subtlety present in understanding the relations between these different notions of algebraicity. The other main result for this paper is the following theorem which relates the different notions of algebraicity. 

\begin{thm*} Let $\C$ be a stable model category.
\begin{enumerate}[(i)]
\item (\ref{cor:dz} and \ref{ex:ku}) If $\C$ is algebraic, then $\Ho(\C)$ possesses a $\mathsf{D}(\bbZ)$-action. The converse does not hold. 
\item (\ref{cor:dz_trivial} and \ref{exam:moravaK}) If $\Ho(\C)$ has a $\mathsf{D}(\bbZ)$-action, then it also has a trivial $\Ho(\Sp)$-action. The converse does not hold. 
\item (\ref{prop:alg_triang} and \ref{exam:moravaK}) If $\C$ is algebraic, then it is also triangulated algebraic. The converse does not hold. 
\item (\ref{ex:L1Sp}) Being triangulated algebraic does not imply $\mathsf{D}(\bbZ)$-action or trivial $\Ho(\Sp)$-action in general.
\end{enumerate}
\end{thm*}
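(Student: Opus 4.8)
The plan is to assemble the four parts from results proved elsewhere in the paper, handling the positive implications and the separating examples in turn; I indicate below where the real work lies.

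\emph{The positive implications.} For (i), if $\C$ is algebraic then it admits a Quillen equivalence to a $\Ch$-enriched model category $\D$. The homotopy category $\Ho(\D)$ is then a module over $\mathsf{D}(\bbZ) = \Ho(\Ch)$, and one checks that the canonical spectral enrichment of $\D$ — which depends only on the underlying stable model structure by \cite{lenhardt2012stable} — is recovered from this $\mathsf{D}(\bbZ)$-action by restriction along the symmetric monoidal functor $\Ho(\Sp) \to \mathsf{D}(\bbZ)$, $X \mapsto X \wedge H\bbZ$; this is the content of $\cref{prop:dz}$ and $\cref{cor:dz}$. Since a Quillen equivalence respects the canonical spectral enrichment and so induces a $\Ho(\Sp)$-linear equivalence on homotopy categories, $\Ho(\C)$ inherits the $\mathsf{D}(\bbZ)$-action. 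For (ii), having a $\mathsf{D}(\bbZ)$-action means precisely that the $\Ho(\Sp)$-action factors through $\mathsf{D}(\bbZ)$, and since $\pi_*(\s) \to \pi_*(H\bbZ) = \bbZ$ vanishes in positive degrees the induced action of $\pi_{>0}(\s)$ is already trivial; this is $\cref{cor:dz_trivial}$. For (iii), the same Quillen equivalence $\C \simeq \D$ yields a triangulated equivalence $\Ho(\C) \simeq \Ho(\D)$, and Morita theory for $\Ch$-enriched model categories \cite{ss_module, dugger_spectral} identifies $\Ho(\D)$ with the derived category of a DGA (if there is a compact generator) or of a DG-category in general, so $\C$ is triangulated algebraic; this is $\cref{prop:alg_triang}$.

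\emph{The separating examples.} For the failure of the converse in (i) I take $\C = \E\Mod$ with $\E = \Hom_{ku}(H\bbZ,H\bbZ)$, analysed in $\cref{sec:kuHZ}$: there $\E$ is shown to be an $H\bbZ$-module, which makes $\E\Mod$ carry a $\mathsf{D}(\bbZ)$-action, while $\E$ is not weakly equivalent to an $H\bbZ$-algebra as a ring spectrum, so $\E\Mod$ is not algebraic by $\cref{prop:algebraicEMLHZalg}$ ($\cref{ex:ku}$). For the failures in (ii) and (iii) I take $\C = K(n)\Mod$ ($\cref{exam:moravaK}$): since $K(n)_*$ is a graded field the universal coefficient spectral sequence collapses, giving a triangulated equivalence $\Ho(K(n)\Mod) \simeq \mathsf{D}(K(n)_*)$, so $K(n)\Mod$ is triangulated algebraic, and it has a trivial $\Ho(\Sp)$-action by $\cref{lem:nilpotence}$ since $K(n)$ lies in a set of ring spectra detecting nilpotence; but $K(n)$ is not a generalized Eilenberg--MacLane spectrum, hence not an $H\bbZ$-module and a fortiori not weakly equivalent to an $H\bbZ$-algebra, so $K(n)\Mod$ is neither algebraic (by $\cref{prop:algebraicEMLHZalg}$) nor possessed of a $\mathsf{D}(\bbZ)$-action (else $\cref{prop:dz}$ would make $K(n) = \Hom_{K(n)}(K(n),K(n))$ an $H\bbZ$-module). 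Finally, for (iv) I take $\C = L_1\Sp$ at an odd prime $p$ ($\cref{ex:L1Sp}$): Franke's construction \cite{franke1996uniqueness} exhibits $\Ho(L_1\Sp)$ as triangulated equivalent to the homotopy category of a $\Ch$-enriched model category, so $L_1\Sp$ is triangulated algebraic, whereas $\alpha_1$ is a nonzero element of $\pi_{2p-3}(L_1\s)$, so $L_1\s \wedge^L \alpha_1 \neq 0$ and $L_1\Sp$ fails to have trivial $\Ho(\Sp)$-action by $\cref{thm:klocalcohen}$ — hence, by part (ii), it has no $\mathsf{D}(\bbZ)$-action either.

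\emph{The main obstacle.} Granting $\cref{prop:dz}$, $\cref{cor:dz}$, $\cref{cor:dz_trivial}$, $\cref{prop:alg_triang}$, $\cref{prop:algebraicEMLHZalg}$, $\cref{lem:nilpotence}$ and $\cref{thm:klocalcohen}$, the implications and all but one of the separating examples are essentially formal, the only other genuine input being the standard fact that $K(n)$ is not a generalized Eilenberg--MacLane spectrum. The real work is concentrated in $\cref{ex:ku}$: proving that $\E = \Hom_{ku}(H\bbZ,H\bbZ)$ is an $H\bbZ$-module but is not weakly equivalent to an $H\bbZ$-algebra. Exhibiting the $H\bbZ$-module structure and then obstructing every possible $H\bbZ$-algebra structure — which forces one to compute inside $\E$, for instance via its homotopy ring together with a $k$-invariant or Toda-bracket argument — is the delicate step on which this theorem ultimately rests.
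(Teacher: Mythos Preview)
Your assembly of the implications and separating examples matches the paper's structure, and the citations are correct. Two points of divergence are worth noting.

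First, a small imprecision: for the converse failure in (i) you say that $\E$ being an $H\bbZ$-module ``makes $\E\Mod$ carry a $\mathsf{D}(\bbZ)$-action''. A module structure alone does not give this; what the paper actually uses in \cref{ex:ku} is the \emph{ring} map $H\bbZ \to \E$ (adjoint to the multiplication $H\bbZ \wedge_{ku} H\bbZ \to H\bbZ$), so that $\Ho(\rmod\E)$ is enriched over $\mathsf{D}(\bbZ)$ via restriction and extension of scalars. Your argument for (ii) is in fact more direct than the paper's: the paper routes through \cref{prop:dz} and \cref{cor:emlaction} (mapping spectra are generalized Eilenberg--Mac~Lane), whereas you simply observe that $\pi_{>0}(\mathbb{S}) \to \pi_{>0}(H\bbZ)$ is zero, which is cleaner.

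Second, and more substantively, your diagnosis of ``the main obstacle'' is off. You anticipate that obstructing an $H\bbZ$-algebra structure on $\E$ requires computing inside $\E$ via its homotopy ring, $k$-invariants, or Toda brackets. The paper's argument (\cref{notHZalg}) is entirely different and avoids all such computation: it uses the \emph{double centralizer}. If $\E$ were an $H\bbZ$-algebra then so would $\End_\E(H\bbZ)$ be; but the double centralizer map $c\colon ku \to \End_\E(H\bbZ)$ is an equivalence (\cref{dccomplete}, via identification with $\beta$-completion and the observation that $ku_* = \bbZ[\beta]$ is already $\beta$-adically complete for degree reasons), so the unit $\mathbb{S} \to ku$ would factor through $H\bbZ$. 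This is impossible since $[H\bbZ, ku]_0 = 0$ (\cref{lem:hzku}). No internal analysis of $\E$ is needed.
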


For simplicity we illustrate the above theorem in the following diagram demonstrating which implications between the different notions hold and which fail. The complexity of this diagram highlights the importance of reconciling the different types of algebraicity which appear in the field.
\begin{center}
\begin{tikzpicture}
        \node [decision, yshift=5em] (Algebraic) {Algebraic};
    \node [block, below left of=Algebraic, xshift=-5em] (DZ) {$\mathsf{D}(\mathbb{Z})$-action};
    \node [block, below right of=Algebraic, xshift=5em] (tria) {Triangulated algebraic};
    \node[draw,
    decision,
    below=3cm of Algebraic,
    minimum width=2.5cm,
    minimum height=1cm,] (Trivial action) {Trivial action};
    \draw [vecArrow,red] (DZ) to[]+ (0,1)  |-  node[pos=0.25,fill=white,inner sep=2pt]{\cross} (Algebraic);
    \draw [vecArrow, red] (tria) to[]+ (0,1)  |-  node[pos=0.25,fill=white,inner sep=2pt]{\cross} (Algebraic);

   \draw [vecArrow] (Algebraic) -- (DZ)node[pos=0.5,fill=white,inner sep=1pt]{\tick};
      \draw [vecArrow] (Algebraic) -- (tria)node[pos=0.5,fill=white,inner sep=1pt]{\tick};

        \draw [vecArrow] (DZ) -- (Trivial action)node[pos=0.45,fill=white,inner sep=1pt]{\tick};
   \draw [vecArrow,red] (tria) -- (Trivial action)node[pos=0.45,fill=white,inner sep=2pt]{\cross};

    \draw [vecArrow, red] (Trivial action) -|  node[pos=0.25,fill=white,inner sep=1pt]{\cross} (DZ);
   \draw [vecArrow, dashed]  (Trivial action) -| node[pos=0.25,fill=white,inner sep=1pt]{\textbf{?}} (tria);
      \draw [vecArrow,red] (Trivial action) -- (Algebraic) node[pos=0.5,fill=white,inner sep=4pt]{} node[pos=0.25,fill=white,inner sep=2pt]{\cross};
          \draw [vecArrow,red] (tria) -- (DZ) node[pos=0.25,fill=white,inner sep=2pt]{\cross};
\end{tikzpicture}
\end{center}
In general, it is extremely difficult to determine whether $\Ho(\C)$ is triangulated algebraic. Schwede's notion of $n$-order \cite{schwedealg} can sometimes give a measure to what extent this is \emph{not} the case but remains inconclusive for some examples, see \cref{MU}. In some cases, there is a general machinery to produce a triangulated equivalence to an algebraic model, see \cref{ex:irakli}, but failure to apply this machinery does still not rule out the existence of another triangulated equivalence in general. Therefore, we can only conclude that a characterization of triangulated algebraic model categories is likely to remain an open problem for some time.

\subsection*{Conventions}
A object $X$ of a triangulated category $\mathsf{T}$ is said to be \emph{compact} if $[X,-]^\mathsf{T}$ commutes with arbitrary coproducts. A full triangulated subcategory $\mathcal{L}$ of $\mathsf{T}$ is said to be \emph{localizing} if it is closed under arbitrary coproducts.  An object $X \in \mathsf{T}$ is said to be a \emph{generator} if the smallest localizing subcategory of $\mathsf{T}$ containing $X$ is the whole of $\mathsf{T}$. If $X$ is a compact object in $\mathsf{T}$ then it is a generator if and only if the corepresentable $[X,-]^\mathsf{T}$ detects trivial objects, see for example~\cite[Lemma 2.2.1]{ss_module}. When $\mathsf{T}$ has a compact generator (rather than a set of compact generators) we say that $\mathsf{T}$ is \emph{monogenic}.

We write $\Sp$ to denote a suitable monoidal model category of spectra such as symmetric spectra or orthogonal spectra. We write $\mathbb{S}$ for the sphere spectrum.

\subsection*{Acknowledgements} The authors thank Scott Balchin, David Barnes and Anna Marie Bohmann for helpful discussions. We furthermore thank the referee of the final version for their helpful comments. 
The second author thanks the London Mathematical Society for an Emmy Noether Fellowship.
The third author was supported by the grant GA~\v{C}R 20-02760Y from the Czech Science Foundation. 

\section{Shipley's algebraicization theorem}\label{sec:shipley}
In~\cite{shipley_hz}, Shipley constructs a passage between ring spectra and differential graded algebras. In this section, we give a summary of some of the main results of~\cite{shipley_hz} that will be needed in later sections. Furthermore, we recall the construction of the Eilenberg-Mac Lane spectrum associated to a DGA, which will be crucial for the rest of this paper.

\subsection{Modules, monoids and adjoint lifting}
Given a monoidal model category $\C$, we denote the category of monoid objects in $\C$ by $\mathrm{Ring}(\C)$. Given $S \in \mathrm{Ring}(\C)$, we write $S\Mod(\C)$ for the category of $S$-modules in $\C$. If the underlying category is evident from the context, we will instead write $S\Mod$. Similarly, given a commutative monoid $S$ in $\C$, we write $S\Alg(\C)$ for the category of $S$-algebras in $\C$. Under mild hypotheses, the categories of monoids, modules over a monoid, and algebras over a commutative monoid admit model structures in which the weak equivalences and fibrations are created by the forgetful functors to $\C$, see~\cite[Theorem 4.1]{ss_algebras}.

Suppose we have a weak monoidal Quillen adjunction $L: \D \rightleftarrows \C :R$ 
in the sense of~\cite[Definition 3.6]{ss_equivalences}.
Since the right adjoint $R$ is lax monoidal it preserves monoid objects. Therefore by the adjoint lifting theorem (see also~\cite[\S 3.3]{ss_equivalences}) for every monoid $A$ in $\mathcal{C}$, the Quillen adjunction $(L,R)$ lifts to a Quillen adjunction
 \[\begin{tikzcd}
A\Mod \arrow[r, "R"', yshift=-1mm] & R(A)\Mod \arrow[l, "L^A"', yshift=1mm]
 \end{tikzcd}\]
 between the categories of modules. We note that the functor $L^A$ is different to the underlying functor $L$. When $L$ is strong monoidal, the functor $L^A$ takes the form
 \[L^A(M)=L(M)\wedge_{L(R(A))}A\] for all $M \in R(A)\Mod.$

\subsection{Change of rings}
Recall that given a map of commutative ring spectra $\theta\colon S \to R$ the \emph{restriction of scalars }$\theta^*\colon R\Mod \to S\Mod$ has a left adjoint $\theta_* = R \wedge_S -$ called \emph{extension of scalars.} Together these form a strong monoidal Quillen adjunction, and so given an $R$-algebra $A$ (i.e., a monoid in $R\Mod$), there is an induced Quillen adjunction \[ \theta_*^A: \theta^{\ast}(A)\Mod(S\Mod) \rightleftarrows A\Mod(R\Mod) : \theta^{\ast}\] where $\theta_*^A(X) =\theta_*(X)\wedge_{\theta_*\theta^{\ast}(A)} A$ as described above.
\begin{lemma}\label{rest-mod}
Let $\theta\colon S \to R$ be a map of commutative ring spectra, and let $A$ be an $R$-algebra. The adjunction \[ \theta_*^A: \theta^{\ast}(A)\Mod(S\Mod) \rightleftarrows A\Mod(R\Mod) : \theta^{\ast}\] is a Quillen equivalence.
\end{lemma}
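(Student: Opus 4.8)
The plan is to prove something a little stronger than the assertion: that the right adjoint $\theta^\ast$ is in fact already an equivalence of the underlying categories, and then to read off that the Quillen adjunction is a Quillen equivalence essentially for free.

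The key observation is the following. For any commutative ring spectrum $T$ and any $T$-algebra $B$, the forgetful functor from $B\Mod(T\Mod)$ to the category of modules in $\Sp$ over the underlying ring spectrum of $B$ is an equivalence of categories (indeed an isomorphism): a $B$-module structure on a spectrum $N$ forces a $T$-module structure via the unit map $T\to B$, namely the action $T\wedge N \to B\wedge N \to N$, and the $B$-action then descends along the quotient $B\wedge N \to B\wedge_T N$ precisely because $B$ is an algebra over the \emph{commutative} ring spectrum $T$, so that the image of $T$ in $B$ is central. One checks directly that $N\mapsto (N, \text{forced }T\text{-structure})$ is inverse to the forgetful functor. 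I would carry out this verification first; it is the only step with any real content.

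Applying this observation with $(T,B)=(R,A)$ and with $(T,B)=(S,\theta^\ast(A))$ — and noting that $A$ and $\theta^\ast(A)$ have the same underlying ring spectrum, since restriction of scalars does not alter underlying spectra — one obtains a commutative triangle of functors in which both legs landing in the category of $A$-modules in $\Sp$ are equivalences, and the third side is $\theta^\ast$. Hence $\theta^\ast\colon A\Mod(R\Mod)\to\theta^\ast(A)\Mod(S\Mod)$ is an equivalence of categories. Its left adjoint $\theta_\ast^A$ is then automatically a quasi-inverse, so the unit and counit of the adjunction $(\theta_\ast^A,\theta^\ast)$ are natural isomorphisms.

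Finally, by~\cite[Theorem 4.1]{ss_algebras} the model structures on both $\theta^\ast(A)\Mod(S\Mod)$ and $A\Mod(R\Mod)$ are created by the forgetful functors to $\Sp$ (factoring through $S\Mod$, resp.\ $R\Mod$), and $\theta^\ast$ is compatible with these forgetful functors under the identifications above; therefore $\theta^\ast$ both preserves and reflects weak equivalences. A right Quillen functor which is an equivalence of categories and reflects weak equivalences — equivalently, one whose adjunction unit is a natural isomorphism and which reflects weak equivalences — is a Quillen equivalence, so we are done. The main obstacle is precisely the monoidal bookkeeping in the second paragraph (checking that the $T$-module structure is uniquely and functorially recovered, using centrality of the image of $T$); everything after that is formal.
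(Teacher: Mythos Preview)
Your argument is correct and in fact proves a stronger point-set statement than the paper needs. The paper instead argues entirely at the homotopical level: it notes that $\theta^\ast$ reflects weak equivalences, so by~\cite[Corollary 1.3.16]{hovey2007model} it suffices to check that the derived unit $M\to\theta^\ast\theta_\ast^A M$ is an isomorphism in $\Ho(\theta^\ast(A)\Mod)$; the class of $M$ for which this holds is a localizing subcategory containing the generator $\theta^\ast(A)$ (by a one-line computation $\theta^\ast\theta_\ast^A(\theta^\ast(A))\simeq\theta^\ast(A)$), hence is everything.

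The two approaches trade off in the expected way. Your route establishes that $\theta^\ast$ is an honest equivalence of categories---a fact of independent interest and arguably the ``right'' explanation for the lemma---at the cost of the monoidal bookkeeping you flag (checking that the $T$-action is recovered and that the $B$-action descends to $B\wedge_T N$, which uses associativity of the $B$-action together with centrality of the image of $T$). The paper's route avoids that bookkeeping entirely and instead relies on the standard localizing-subcategory trick, which only ever touches the generator; this is quicker to write down but gives a weaker conclusion and obscures the underlying categorical reason the result is true. Both arguments are standard and either would be acceptable here.
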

\begin{proof}
Since the restriction of scalars $\theta^*$ reflects weak equivalences, by \cite[Corollary 1.3.16]{hovey2007model}, it is sufficient to show that the derived unit 
$$\eta_M\colon M \to \theta^*\theta_*^AM$$ is an isomorphism for all $M \in \Ho(\theta^*(A)\Mod)$. Consider the full subcategory $\mathcal{L}$ of $\Ho(\theta^{\ast}(A)\Mod)$
consisting of the objects $M$ for which the derived unit $\eta_M$ is an isomorphism. Since $\theta^*$ and $\theta_*^A$ are both exact, coproduct-preserving functors, the subcategory $\mathcal{L}$ is localizing. The subcategory $\mathcal{L}$ also contains $\theta^*(A)$ since by definition of $\theta_*^A$ we have
\[
\theta^{\ast}\theta_*^{A}({\theta^{\ast}(A)})=\theta^{\ast}\left(\theta_*\theta^{\ast}(A)\wedge_{\theta_*\theta^{\ast}(A)} A\right)\simeq \theta^{\ast}(A).\]
Since $\theta^*(A)$ is a generator for $\theta^*(A)\Mod$, the localizing subcategory $\mathcal{L}$ is the whole of $\Ho(\theta^{\ast}(A)\Mod)$ as required.
\end{proof}

\subsection{Eilenberg-Mac~Lane ring spectra associated to a DGA}\label{subsection_EML}
We can associate to any given DGA $\mathcal{A}$ a ring spectrum $\EML(\A)$ called the Eilenberg-Mac~Lane ring spectrum associated to $\A$. In this subsection, we give a brief summary of the construction of this ring spectrum. This is based on \cite{shipley_hz}, and we refer the reader there for more details.

Let $\Ch^{+}$ denote the category of non-negatively graded chain complexes of abelian groups, and $\sAb$ denote the category of simplicial abelian groups. Let $\widetilde{\bbZ}S^1$ be the reduced free simplicial abelian group on the simplicial circle $S^1=\Delta[1]/ \partial \Delta[1]$, and let $\mathbb{Z}[1]$ be the chain complex which contains a single copy of $\mathbb{Z}$ in degree one. We can form the categories $\Sp^{\Sigma}(\Ch^{+},\mathbb{Z}[1])$ and $\Sp^{\Sigma}(\sAb, \widetilde{\bbZ}S^1)$ of symmetric spectra over $\Ch^{+}$ and $\sAb$ as in \cite{hovey2001spectra}. To ease notation we will denote these categories as $\Sp^{\Sigma}(\Ch^{+})$ and $\Sp^{\Sigma}(\sAb)$ respectively.

There are Quillen equivalences~\cite[Proposition 2.10]{shipley_hz}
\begin{equation}\label{eq:modulefunctors}
\begin{tikzcd}
H\bbZ\Mod \arrow[r, "Z", yshift=1mm] & \Sp^\Sigma(\sAb) \arrow[l, "U", yshift=-1mm] \arrow[r, "\phi^*N"', yshift=-1mm] & \Sp^\Sigma(\Ch^+) \arrow[l, "L"', yshift=1mm] \arrow[r, "D", yshift=1mm] & \Ch \arrow[l, "R", yshift=-1mm] 
\end{tikzcd}
\end{equation}
which we briefly describe now. The functor $U$ is induced by the forgetful functor from simplicial abelian groups to simplicial sets and has a left adjoint $Z$. The pair $(Z,U)$ is a strong monoidal Quillen equivalence. The adjunction $(L, \phi^*N)$ is a weak monoidal Quillen equivalence and is a stabilized version of the Dold-Kan correspondence. The right adjoint is given by first applying the normalization functor $$N\colon \sAb \to \Ch^+$$ levelwise, and then restricting scalars along the ring map $$\phi\colon \mathrm{Sym}_{\Ch^+}(\mathbb{Z}[1]) \to \mathcal{N}$$ where $\mathcal{N}=N(\mathrm{Sym}_{\sAb}(\widetilde{\bbZ}S^1))$ and $\mathrm{Sym}_\C\colon \C \to \mathrm{Fun}(\Sigma, \C)$ denotes the free commutative monoid in the category of symmetric sequences. The functor $$R: \Ch \rightarrow \Sp^{\Sigma}(\Ch^{+})$$ is defined by setting $(RY)_m=C_0(Y \otimes \bbZ[m] )$, where $C_0$ is the connective cover, and $\mathbb{Z}[m]$ is the chain complex with a single copy of $\mathbb{Z}$ in degree $m$. This has a left adjoint $D$, and the pair $(D,R)$ forms a strong monoidal Quillen equivalence.

By taking monoid objects in the Quillen equivalences of~(\ref{eq:modulefunctors}) and applying~\cite[Theorem 3.12]{ss_equivalences} one obtains Quillen equivalences
\begin{equation}\label{eq:algebrafunctors}
\begin{tikzcd}
H\bbZ\Alg \arrow[r, "Z", yshift=1mm] & \mathrm{Ring}(\Sp^\Sigma(\sAb)) \arrow[l, "U", yshift=-1mm] \arrow[r, "\phi^*N"', yshift=-1mm] & \mathrm{Ring}(\Sp^\Sigma(\Ch^+))\arrow[l, "L^\mathrm{mon}"', yshift=1mm] \arrow[r, "D", yshift=1mm] & \mathrm{DGA}_\bbZ. \arrow[l, "R", yshift=-1mm] 
\end{tikzcd}
\end{equation}
Taking composites of derived functors in~(\ref{eq:algebrafunctors}) we obtain functors 
\[\mathbb{H}\colon \mathrm{DGA}_\bbZ \to H\bbZ\Alg \quad \mathrm{and} \quad \Theta\colon H\bbZ\Alg \to \mathrm{DGA}_\bbZ\]
defined by $\mathbb{H} = UL^\mathrm{mon}cR$ and $\Theta = Dc\phi^*NZc$, where $c$ denotes cofibrant replacement in the appropriate category of monoids. Note that no fibrant replacements are necessary since each of the right adjoints preserves all weak equivalences. 

We now have the necessary background to recall the following important theorem of Shipley.
\begin{thm}[{\cite[Corollary 2.15]{shipley_hz}}]\label{thm:shipley}\leavevmode
\begin{itemize}
\item[(i)] Let $R$ be an $H\bbZ$-algebra. Then $R\Mod(H\bbZ\Mod) \simeq_Q \Theta(R)\Mod(\Ch)$.
\item[(ii)] Let $\A$ be a DGA. Then $\A\Mod(\Ch) \simeq_Q \mathbb{H}\A\Mod(H\bbZ\Mod).$
\end{itemize}
\end{thm}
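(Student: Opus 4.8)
The plan is to deduce this from the chain of Quillen equivalences in~(\ref{eq:modulefunctors}) via the adjoint lifting machinery recalled above. For part~(i), given an $H\bbZ$-algebra $R$, I would observe that $R$ is a monoid in $H\bbZ\Mod$, and that each of the four categories in~(\ref{eq:modulefunctors}) is a monoidal model category connected by weak monoidal Quillen equivalences whose right adjoints are lax monoidal. Thus, applying the adjoint lifting theorem as discussed in~\S\ref{sec:shipley}, the Quillen equivalence $H\bbZ\Mod \simeq_Q \Sp^\Sigma(\sAb)$ lifts to a Quillen equivalence between $R$-modules and $ZR$-modules (or rather $cZR$-modules, matching the cofibrant replacement implicit in the definition of $\Theta$); similarly for the remaining two steps. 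Composing, and tracking that the monoid appearing at the $\Ch$-end is precisely $\Theta(R) = Dc\phi^*NZc(R)$ by definition, gives $R\Mod(H\bbZ\Mod) \simeq_Q \Theta(R)\Mod(\Ch)$. The one subtlety is that the lifted adjunctions are between modules over a \emph{cofibrant} monoid in each intermediate category, so one should insert the observation that modules over a monoid and modules over its cofibrant replacement are Quillen equivalent (e.g.\ via~\cite[Theorem 4.3]{ss_equivalences} or the invariance of module categories under weak equivalence of monoids), which is why the cofibrant replacements $c$ are built into $\Theta$.

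For part~(ii), given a DGA $\A$, I would run the same argument in the other direction: $\A$ is a monoid in $\Ch$, and reading~(\ref{eq:modulefunctors}) from right to left — using that $R$, $\phi^*N$, and $U$ are the right adjoints and are lax monoidal — lift each Quillen equivalence to the module categories. The monoid that appears at the $H\bbZ\Mod$-end after the three liftings (and after cofibrant-replacing in each category of monoids, as encoded in $c$) is exactly $\mathbb{H}\A = UL^\mathrm{mon}cR(\A)$, giving $\A\Mod(\Ch) \simeq_Q \mathbb{H}\A\Mod(H\bbZ\Mod)$. Here one should be slightly careful about the direction of the monoidal structure: the weak monoidal Quillen equivalence $(L,\phi^*N)$ has its strong(er) monoidal properties on the side of the \emph{left} adjoint only up to the comparison maps of~\cite[Definition 3.6]{ss_equivalences}, so the lifting to modules is the content of~\cite[Theorem 3.12]{ss_equivalences} applied levelwise to the three adjunctions, exactly as was done to obtain~(\ref{eq:algebrafunctors}).

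Since this theorem is quoted verbatim as~\cite[Corollary 2.15]{shipley_hz}, the cleanest exposition is simply to reduce to that reference rather than reproduce Shipley's argument. I expect the only real work — and the main potential obstacle — is bookkeeping: making sure that the composite of the three lifted adjunctions genuinely has $\Theta(R)$ (resp.\ $\mathbb{H}\A$) as defined above as the relevant base monoid, rather than some zig-zag-equivalent but not-literally-equal monoid, and that the necessary cofibrancy hypotheses (so that the adjoint lifting theorem and~\cite[Theorem 4.1]{ss_algebras} apply) are in force at each stage. Once the definitions of $\mathbb{H}$ and $\Theta$ are unwound, both statements follow formally by transitivity of Quillen equivalence.
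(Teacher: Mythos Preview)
The paper gives no proof of this statement at all; it is stated purely as a recalled result of Shipley, introduced with ``We now have the necessary background to recall the following important theorem of Shipley'' and accompanied only by the citation~\cite[Corollary 2.15]{shipley_hz}. Your proposal correctly sketches Shipley's own argument (lift the weak monoidal Quillen equivalences of~(\ref{eq:modulefunctors}) to module categories via~\cite[Theorem 3.12]{ss_equivalences}, inserting cofibrant replacements of monoids as needed), and you yourself observe that the cleanest route is simply to defer to the reference --- which is precisely what the paper does.
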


\begin{defi} The Eilenberg-Mac Lane ring spectrum associated to a DGA $\A$ is defined as $\EML (\A):= \theta^{\ast} (\mathbb{H}\A)$ where $\theta^{\ast}$ denotes the restriction of scalars along the unit map $\theta\colon \s \to H\bbZ$.
\end{defi}

In particular, if $\A$ is concentrated in degree 0, the spectrum $\EML(\A)$ is the ``classical'' Eilenberg-Mac Lane spectrum $H\A$ whose homotopy groups are 
\[
\pi_0(H\A)\cong \A\,\,\,\mbox{and}\,\,\,\pi_i(H\A)=0\,\,\,\mbox{for}\,\,\,i \neq 0.
\]

\begin{prop}\label{prop:Amod} Let $\theta\colon \mathbb{S} \to H\bbZ$ be the unit map, and $\theta^*$ denote the restriction of scalars functor along $\theta$.
\begin{itemize}
\item[(i)] Let $R$ be an $H\bbZ$-algebra. Then $(\theta^*R)\Mod(\Sp) \simeq_Q \Theta(R)\Mod(\Ch)$.
\item[(ii)] Let $\A$ be a DGA. Then $\A\Mod(\Ch) \simeq_Q \EML(\A)\Mod(\Sp).$
\end{itemize}
\end{prop}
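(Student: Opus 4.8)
The plan is to deduce this proposition directly from Shipley's comparison theorem (\cref{thm:shipley}) together with the change-of-rings Quillen equivalence of \cref{rest-mod}, applied to the unit map $\theta\colon \s \to H\bbZ$, which is a map of commutative ring spectra since $H\bbZ$ is $E_\infty$.

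For part (i), let $R$ be an $H\bbZ$-algebra, so in particular $R$ is a monoid in $H\bbZ\Mod$. Applying \cref{rest-mod} with the map of commutative ring spectra taken to be $\theta\colon \s \to H\bbZ$ and the algebra $A$ taken to be $R$ itself, we obtain a Quillen equivalence
\[ \theta_*^R\colon (\theta^*R)\Mod(\s\Mod) \rightleftarrows R\Mod(H\bbZ\Mod)\colon \theta^*. \]
Since $\s$ is the monoidal unit of $\Sp$, the category $\s\Mod(\Sp)$ is $\Sp$, so the left-hand side is $(\theta^*R)\Mod(\Sp)$. Composing this with the Quillen equivalence $R\Mod(H\bbZ\Mod) \simeq_Q \Theta(R)\Mod(\Ch)$ of \cref{thm:shipley}(i) gives the statement.

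For part (ii), let $\A$ be a DGA. By \cref{thm:shipley}(ii) we have $\A\Mod(\Ch) \simeq_Q \mathbb{H}\A\Mod(H\bbZ\Mod)$, and since $\mathbb{H}\A$ is an $H\bbZ$-algebra, part (i) (equivalently, \cref{rest-mod} applied with $A = \mathbb{H}\A$) yields $\mathbb{H}\A\Mod(H\bbZ\Mod) \simeq_Q (\theta^*\mathbb{H}\A)\Mod(\Sp)$. By definition $\EML(\A) = \theta^*(\mathbb{H}\A)$, so composing gives $\A\Mod(\Ch) \simeq_Q \EML(\A)\Mod(\Sp)$.

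The argument is essentially bookkeeping and there is no serious obstacle; the points that need a little care are the identification $\s\Mod(\Sp) = \Sp$ (which holds on the nose for modules over the monoidal unit), checking that the hypotheses of \cref{rest-mod} genuinely apply here, and confirming that the restriction functor $\theta^*$ of \cref{rest-mod} agrees with the one in the statement of the proposition — which it does, since both are the underlying-spectrum functor along $\theta$. One should also record that the relevant module model structures exist, which is guaranteed for the standard monoidal models of spectra.
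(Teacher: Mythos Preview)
Your proof is correct and follows essentially the same approach as the paper: both combine \cref{thm:shipley} with \cref{rest-mod} applied to the unit map $\theta\colon \s \to H\bbZ$. The paper's version is terser (it treats (ii) with ``the proof is similar''), while you spell out the identifications and sanity checks more explicitly, but the argument is the same.
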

\begin{proof}
For part (i), by \cref{thm:shipley} we have $R\Mod(H\bbZ\Mod) \simeq_Q \Theta(R)\Mod(\Ch)$, and by \cref{rest-mod} we have $R\Mod(H\bbZ\Mod) \simeq_Q (\theta^*R)\Mod(\Sp)$. The proof of (ii) is similar.
\end{proof}

We end this section by recalling the relationship between $H\bbZ$-modules and generalized Eilenberg-Mac Lane spectra in the sense of~\cite{CG}.
\begin{defi}\label{defi:GEML}
A spectrum $X$ is a \emph{generalized Eilenberg-Mac Lane spectrum} if $X$ is weakly equivalent (as a spectrum) to $\vee_{i \in \bbZ} \Sigma^i HA_i$ where each $A_i$ is an abelian group. 
\end{defi}

\begin{prop}[{\cite[Proposition 5.3]{CG}}]\label{prop:CG}
A spectrum is a generalized Eilenberg-Mac~Lane spectrum if and only if it is weakly equivalent to an $H\bbZ$-module.
\end{prop}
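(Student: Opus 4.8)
The statement to prove: a spectrum is a generalized Eilenberg–Mac Lane spectrum (i.e. weakly equivalent to $\vee_{i\in\bbZ}\Sigma^i HA_i$) if and only if it is weakly equivalent to an $H\bbZ$-module.

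The plan is to prove each implication separately. For the ``only if'' direction, I would first observe that each Eilenberg–Mac Lane spectrum $HA$ for an abelian group $A$ is an $H\bbZ$-module: indeed $A$ has a free resolution $0\to F_1\to F_0\to A\to 0$, each $HF_j$ is a wedge of copies of $H\bbZ$ and hence an $H\bbZ$-module, and $HA$ is the cofibre of a map of $H\bbZ$-modules, hence an $H\bbZ$-module. (Alternatively, $HA\simeq H\bbZ\wedge_{\mathbb S}\cdots$ via the Moore spectrum, but the resolution argument is cleanest.) Then $\Sigma^i HA_i$ is an $H\bbZ$-module for all $i$, and since the category of $H\bbZ$-modules is closed under arbitrary coproducts (wedges), $\vee_{i\in\bbZ}\Sigma^i HA_i$ is an $H\bbZ$-module. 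Finally, being an $H\bbZ$-module is invariant under weak equivalence of spectra (replace by a weakly equivalent $H\bbZ$-module via the forgetful functor being part of a Quillen adjunction, or simply note that the property of admitting an $H\bbZ$-module structure up to weak equivalence is what is being asserted), so any spectrum weakly equivalent to such a wedge is weakly equivalent to an $H\bbZ$-module.

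For the ``if'' direction — which I expect to be the substantive part — let $M$ be an $H\bbZ$-module. The key structural input is that the derived category of $H\bbZ$-modules is equivalent to $\mathsf{D}(\bbZ)$: concretely, by Shipley's theorem (the case $\A=\bbZ$ of~\cref{thm:shipley}, or directly~\cite{shipley_hz}) there is a Quillen equivalence $H\bbZ\Mod(\Sp)\simeq_Q \Ch$, and under this equivalence $M$ corresponds to some chain complex $C_*$ of abelian groups. Now over $\bbZ$ every chain complex is formal — it is quasi-isomorphic to its homology $\bigoplus_i H_i(C_*)[i]$ regarded as a complex with zero differential — because $\bbZ$ has global dimension one and one can build an explicit splitting using projective resolutions of the homology groups (this is the standard fact that $\mathsf{D}(\bbZ)$ is equivalent to the category of graded abelian groups). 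Transporting this quasi-isomorphism back across the Quillen equivalence, $M$ becomes weakly equivalent, \emph{as an $H\bbZ$-module}, to the image of $\bigoplus_i H_i(C_*)[i]$. It remains to identify that image: the equivalence sends $\bbZ[i]$ to $\Sigma^i H\bbZ$, and is additive (preserves coproducts and suspensions), so $\bigoplus_i A_i[i]$ (with $A_i=H_i(C_*)$, realised via a free resolution as a complex of free abelian groups) maps to a spectrum weakly equivalent to $\vee_i \Sigma^i HA_i$. Forgetting the $H\bbZ$-module structure, $M$ is weakly equivalent as a spectrum to $\vee_{i\in\bbZ}\Sigma^i HA_i$, which is the definition of a generalized Eilenberg–Mac Lane spectrum.

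The main obstacle is making the formality argument precise and functorial enough to transport cleanly across the Quillen equivalence: one must ensure that the chain-level splitting of $C_*$ into its homology can be chosen compatibly so that, after cofibrant/fibrant replacement, it represents an honest weak equivalence of $H\bbZ$-modules rather than merely an abstract isomorphism in the homotopy category — though since we only need a weak equivalence of \emph{spectra} in the end, any zig-zag in $\Ho(H\bbZ\Mod)$ suffices and this subtlety largely evaporates. A secondary point to handle carefully is the indexing/convergence of the (possibly infinite) wedge, but since coproducts in spectra and in $H\bbZ$-modules agree and both the homology decomposition and the functor $D$ commute with arbitrary coproducts, there is no genuine difficulty there.
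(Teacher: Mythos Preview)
The paper does not supply its own proof of this proposition; it is stated with a bare citation to~\cite[Proposition 5.3]{CG} and used as a black box. Your proposal is correct: the ``only if'' direction is immediate, and for the ``if'' direction the passage through Shipley's equivalence $H\bbZ\Mod\simeq_Q\Ch$ together with the formality of chain complexes over the hereditary ring $\bbZ$ gives exactly the required splitting.

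For comparison, the original argument in~\cite{CG} predates~\cite{shipley_hz} and so cannot invoke that Quillen equivalence. Instead it proceeds more directly at the level of spectra: an $H\bbZ$-module $M$ is a homotopy retract of the free module $H\bbZ\wedge M$, and for any spectrum $X$ the smash product $H\bbZ\wedge X$ is a generalized Eilenberg--Mac~Lane spectrum (one builds a map $\bigvee_i \Sigma^i H(H\bbZ_iX)\to H\bbZ\wedge X$ degree by degree using that the relevant $k$-invariants vanish, and checks it is a $\pi_*$-isomorphism); a retract of a generalized Eilenberg--Mac~Lane spectrum is again one. Your route is more conceptual and fits naturally with the machinery already assembled in \cref{sec:shipley}, whereas the original is elementary and self-contained but requires a small obstruction-theoretic argument by hand.
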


Using the previous characterization of generalized Eilenberg-Mac Lane spectra one obtains the following result.
\begin{prop}\label{prop:GEML}
Let $\A$ be a DGA. The spectrum $\mathbb{H}\A$ is a generalized Eilenberg-Mac Lane spectrum. In other words, there is an equivalence of underlying spectra \[\mathbb{H}\A \simeq \bigvee_{i \in \bbZ} \Sigma^i HA_i\] where $A_i = H_i(\A)$, the $i$th homology group of $\A$. 
\end{prop}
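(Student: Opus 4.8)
The plan is to combine the Quillen equivalences of~(\ref{eq:modulefunctors}) at the level of underlying modules with the characterization of generalized Eilenberg-Mac~Lane spectra in~\cref{prop:CG}. First I would observe that it suffices to show that the underlying spectrum of $\mathbb{H}\A$ is weakly equivalent to an $H\bbZ$-module, since then \cref{prop:CG} identifies it with a wedge $\bigvee_i \Sigma^i HA_i$, and the identification of the $A_i$ will follow by computing homotopy groups. Now $\mathbb{H}\A = UL^{\mathrm{mon}}cR(\A)$ by definition, and forgetting the multiplicative structure, the underlying spectrum of $\mathbb{H}\A$ is obtained by applying the composite of the \emph{underlying} derived functors $U, L, R$ from~(\ref{eq:modulefunctors}) to $\A$ regarded merely as an object of $\Ch$ (one needs here that the forgetful functors from monoids commute with these adjoints up to the relevant natural transformations, which is exactly the content of~\cite[\S 3.3]{ss_equivalences} used to build~(\ref{eq:algebrafunctors}); I would cite this rather than reprove it). In particular the underlying spectrum of $\mathbb{H}\A$ lies in the image of the forgetful functor $H\bbZ\Mod \to \Sp$, i.e. it is (weakly equivalent to) an $H\bbZ$-module, so \cref{prop:CG} applies.

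Next I would pin down the homotopy groups. Since the zig-zag~(\ref{eq:modulefunctors}) is a chain of Quillen equivalences, the derived composite $\mathbb{H}\colon \Ch \to H\bbZ\Mod$ (underlying) is an equivalence of homotopy categories, and in particular it is compatible with the triangulated/suspension structure, so $\pi_*(\mathbb{H}\A) \cong \pi_*(\A)$ where on the right $\pi_*$ denotes the homotopy groups of $\A$ viewed as an object of $\Ho(\Ch) = \mathsf{D}(\bbZ)$ — that is, $\pi_i(\mathbb{H}\A) \cong H_i(\A)$. On the other hand, for a wedge of shifted Eilenberg-Mac~Lane spectra $\bigvee_i \Sigma^i HA_i$ we have $\pi_i\big(\bigvee_j \Sigma^j HA_j\big) \cong A_i$ (using that $\Sigma^i HA_i$ has homotopy concentrated in degree $i$ and that the wedge, which here agrees with the product in each degree since only finitely many — in fact one — summand contributes to $\pi_i$, computes homotopy groups degreewise). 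Combining these isomorphisms gives $A_i \cong H_i(\A)$, as claimed.

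The one point that requires a little care — and which I expect to be the main obstacle to write cleanly — is the first step: justifying that the \emph{underlying} spectrum of $\mathbb{H}\A = UL^{\mathrm{mon}}cR(\A)$ agrees (up to weak equivalence) with what one gets by applying $U$, $L$, $R$ to the underlying chain complex of $\A$. The subtlety is that $L^{\mathrm{mon}}$ is the lift of $L$ to monoids and need not commute strictly with the forgetful functors, and that a cofibrant replacement $c$ is taken in the category of monoids rather than in $\Ch$. However, the right adjoints $\phi^*N$ and $R$ in~(\ref{eq:modulefunctors}) preserve all weak equivalences (as noted in the excerpt, which is why no fibrant replacements appear), and by~\cite[\S 3.3, Theorem 3.12]{ss_equivalences} the relevant squares of left adjoints commute up to natural weak equivalence on cofibrant objects; applying this to the cofibrant monoid $cR(\A)$ shows its underlying spectrum is cofibrant and that $L^{\mathrm{mon}}$ computes the same homotopy type as $L$ on underlying objects. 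Once this bookkeeping is in place, everything else is formal, and the proof reduces to the two displayed isomorphisms $\pi_i(\mathbb{H}\A)\cong H_i(\A)$ and $\pi_i\big(\bigvee_j \Sigma^j HA_j\big)\cong A_i$ together with \cref{prop:CG}.
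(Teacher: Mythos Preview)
Your argument is correct, but it is far more laborious than needed, and the ``main obstacle'' you identify is not actually an obstacle at all. The paper's proof is one line: by construction the functor $\mathbb{H}$ of~(\ref{eq:algebrafunctors}) has codomain $H\bbZ\Alg$, so $\mathbb{H}\A$ is literally an $H\bbZ$-algebra, hence an $H\bbZ$-module, and \cref{prop:CG} finishes. You never need to compare $L^{\mathrm{mon}}$ with the underlying $L$, nor worry about whether cofibrant replacement in monoids yields a cofibrant underlying object: the $H\bbZ$-module structure on $\mathbb{H}\A$ is visible from the target category of $\mathbb{H}$, not from tracking the underlying object through the zig-zag.

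Where your extra work does buy something is in the identification $A_i = H_i(\A)$, which the paper leaves implicit. Your argument for this is fine, but again it can be streamlined: the weak equivalences in each category of monoids in~(\ref{eq:algebrafunctors}) are \emph{created} by the forgetful functor to the underlying module category (this is how the model structures are built in~\cite{ss_algebras}), so the derived functors in~(\ref{eq:algebrafunctors}) automatically compute the same homotopy types on underlying objects as those in~(\ref{eq:modulefunctors}). Hence $\pi_*(\mathbb{H}\A) \cong H_*(\A)$ without any of the bookkeeping about lifted left adjoints you flag as delicate.
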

\begin{proof}
The spectrum $\mathbb{H}\A$ is an $H\bbZ$-algebra and hence an $H\bbZ$-module, so the claim follows from \cref{prop:CG}.
\end{proof}

We emphasize that the equivalence in the previous proposition is of underlying spectra, and is \emph{not} an equivalence of ring spectra or $H\bbZ$-algebras.

\section{Algebraic model categories and Morita theory}\label{sec:algebraic}
In this section we give a definition of algebraic model categories, and then use Morita theory to give some key characterizations of algebraicity which we will require. We start by recalling the definition of algebraic model categories, and then recall some tools from Morita theory~\cite{ss_module, ds_enriched}. We then use these tools to provide several characterizations of algebraic model categories.

\subsection{Algebraicity}
Recall that given a monoidal model category $\V$, another model category $\C$ is said to be a \emph{$\V$-enriched model category} if the underlying category of $\C$ is enriched, tensored and cotensored over $\V$, and the model structures on $\C$ and $\V$ are suitably compatible, see~\cite[\S 4.3]{gm_enriched} for instance. We write $\C_\V(-,-)$ for the object of $\V$ which gives the enrichment of $\C$ over $\V$.
\begin{defi}\label{defn:algebraic}
A stable model category $\C$ is said to be \emph{algebraic} if it is Quillen equivalent to a combinatorial $\Ch$-enriched model category. 
\end{defi}

\begin{rem}\label{rem:triangulatedequivalence}
It is important to note that any $\Ch$-enriched model category is stable by the analogue of~\cite[Lemma 3.5.2]{ss_module}. As such if $\C$ is algebraic, then the combinatorial $\Ch$-enriched model category $\C^{\mathrm{alg}}$ it is Quillen equivalent to is also stable. Therefore the equivalence $\Ho(\C) \simeq \Ho(\C^{\mathrm{alg}})$ on homotopy categories is necessarily triangulated.
\end{rem}

It will also be convenient to set terminology for a weaker notion of algebraicity.
\begin{defi}
Let $\C$ be a stable model category. We say that $\C$ is \emph{triangulated algebraic} if its homotopy category $\Ho(\C)$ is triangulated equivalent to the stable category of a Frobenius category. If $\Ho(\C)$ is compactly generated, $\C$ is triangulated algebraic if and only if $\Ho(\C)$ is triangulated equivalent to the derived category $\mathsf{D}(\A)$ of a dg-category $\A$ by~\cite[Theorem 3.8]{Keller06}.
\end{defi}

\subsection{Morita theory}
Morita theory is a well-known tool to classify stable model categories with a compact generator in terms of modules over an endomorphism ring spectrum. However, if the stable model category is algebraic in the sense of \cref{defn:algebraic}, then it can be classified using an endomorphism DGA. We start by recalling some key facts about Morita theory from \cite{ss_module, dugger_spectral}, then use the machinery developed in~\cite{ds_enriched} to prove \cref{thm:algebraic} which states that if $\C$ is an algebraic model category then it is Quillen equivalent to $\A \Mod$ for some well-defined DGA $\A$.

A model category $\C$ is said to be \emph{spectral} if it is a $\Sp^\Sigma$-enriched model category, where $\Sp^\Sigma$ denotes the category of symmetric spectra. Recall also that a model category $\C$ is said to be \emph{presentable} if it is Quillen equivalent to a combinatorial model category~\cite[Theorem 4.3]{dugger_spectral}. This is a mild condition on $\C$ which is almost always satisfied in practice. Dugger~\cite[Propositions 5.5 and 5.6]{dugger_spectral} proved that every stable, presentable model category $\C$ is Quillen equivalent to a spectral model category which we will denote by $\C^\mathrm{sp}$. 
\begin{defi}\label{defi:hEnd}
Let $\C$ be a stable, presentable model category, and let $X$ be a bifibrant object in $\C$. We then define a (symmetric) ring spectrum $\mathrm{hEnd}(X)$ by
\[\mathrm{hEnd}(X) = \C^{\mathrm{sp}}_{\Sp^\Sigma}(\overline{X}, \overline{X})\]
where $\overline{X}$ is a bifibrant object of $\C^{\mathrm{sp}}$ which corresponds to $X$ under the Quillen equivalence $\C \simeq_Q \C^\mathrm{sp}$, and $\C^{\mathrm{sp}}_{\Sp^\Sigma}(-,-)$ denotes the enrichment of $\C^{\mathrm{sp}}$ in $\Sp^\Sigma$.
\end{defi}

\begin{thm}[{\cite[Theorem 3.1.1]{ss_module}, \cite[Theorem 8.1]{dugger_spectral}}]\label{thm:morita}
Let $\C$ be a monogenic, stable, presentable model category with a bifibrant compact generator $X$. Then $\C$ is Quillen equivalent to the category of module spectra over the ring spectrum $\hend (X)$. 
\end{thm}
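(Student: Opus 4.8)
This is classical Morita theory in the stable setting, so the plan is to assemble the argument from the inputs already recalled. First I would replace $\C$ by the Quillen equivalent spectral model category $\C^{\mathrm{sp}}$ produced by Dugger's theorem, and let $\overline{X}$ be the bifibrant object of $\C^{\mathrm{sp}}$ corresponding to $X$. Since a Quillen equivalence induces a triangulated equivalence on homotopy categories, $\overline{X}$ is again a compact generator, so it suffices to construct a Quillen equivalence $\hend(X)\Mod \simeq_Q \C^{\mathrm{sp}}$; composing with $\C \simeq_Q \C^{\mathrm{sp}}$ then gives the result.

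Next I would set up the Quillen adjunction. As $\C^{\mathrm{sp}}$ is tensored and cotensored over $\Sp^\Sigma$ and $\overline{X}$ is bifibrant, the functor $\C^{\mathrm{sp}}_{\Sp^\Sigma}(\overline{X},-)\colon \C^{\mathrm{sp}}\to \hend(X)\Mod$ lands in modules over $\hend(X)=\C^{\mathrm{sp}}_{\Sp^\Sigma}(\overline{X},\overline{X})$ and has left adjoint $-\wedge_{\hend(X)}\overline{X}$. Using that $\hend(X)\Mod$ carries the model structure with weak equivalences and fibrations created in $\Sp^\Sigma$ (Schwede--Shipley), and that the $\Sp^\Sigma$-enrichment of $\C^{\mathrm{sp}}$ satisfies the pushout--product axiom, one checks that $\C^{\mathrm{sp}}_{\Sp^\Sigma}(\overline{X},-)$ preserves fibrations and trivial fibrations (because $\overline{X}$ is cofibrant), so this is a Quillen adjunction.

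Then I would show it is a Quillen equivalence. On one side, the derived counit at the rank-one free module is the identification $\hend(X)\wedge^{L}_{\hend(X)}\overline{X}\simeq \overline{X}$, which holds essentially by construction. On the other side, the right adjoint sends a fibrant $Y$ to a module with $\pi_*\, \C^{\mathrm{sp}}_{\Sp^\Sigma}(\overline{X},Y)\cong [\overline{X},Y]^{\Ho(\C^{\mathrm{sp}})}_{*}$; hence it preserves coproducts (as $\overline{X}$ is compact) and reflects weak equivalences between fibrant objects (as $\overline{X}$ is a generator, it detects trivial objects --- cf.\ the Conventions and \cite[Lemma 2.2.1]{ss_module}). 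Finally, the full subcategory of $\Ho(\hend(X)\Mod)$ on which the derived unit is an isomorphism is localizing, since both derived functors are exact and coproduct-preserving, and it contains the generator $\hend(X)$ by the counit computation; so it is everything. Together with the fact that the right derived functor reflects isomorphisms, this shows the adjunction is a Quillen equivalence, and transporting back along $\C\simeq_Q\C^{\mathrm{sp}}$ finishes the proof.

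The main obstacle is the technical identification of the right adjoint with the derived mapping spectrum, i.e.\ the computation $\pi_*\, \mathbf{R}\,\C^{\mathrm{sp}}_{\Sp^\Sigma}(\overline{X},Y)\cong [\overline{X},\Sigma^{-*}Y]^{\Ho(\C^{\mathrm{sp}})}$, together with the bookkeeping needed to know that module categories over an arbitrary ring spectrum in symmetric spectra are cofibrantly generated model categories with the required monoidal compatibilities. These are exactly the inputs supplied by \cite{ss_module} and \cite{dugger_spectral}, which is why the theorem is quoted from there.
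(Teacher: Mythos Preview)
Your proposal is a correct sketch of the standard Schwede--Shipley/Dugger argument. Note, however, that the paper does not supply its own proof of this theorem: it is simply quoted from \cite[Theorem 3.1.1]{ss_module} and \cite[Theorem 8.1]{dugger_spectral}, with the subsequent remark explaining that Dugger's ``stable and presentable'' hypotheses are the relevant ones here (in place of Schwede--Shipley's simplicial, cofibrantly generated, proper, stable assumptions). Your outline --- pass to the spectral replacement $\C^{\mathrm{sp}}$, build the Quillen adjunction $-\wedge_{\hend(X)}\overline{X} \dashv \C^{\mathrm{sp}}_{\Sp^\Sigma}(\overline{X},-)$, and run the localizing-subcategory argument on the derived unit --- is exactly the argument those references carry out, so there is nothing to compare against beyond the citation.
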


\begin{rem}
As stated, the previous result appears in Dugger~\cite{dugger_spectral}, but the key ideas go back to Schwede and Shipley~\cite{ss_module}. More precisely, Schwede and Shipley~\cite[Theorem 3.8.2]{ss_module} show that if $\C$ is simplicial, cofibrantly generated, proper and stable, then it is Quillen equivalent to a spectral model category $\C^{\mathrm{sp}}$. One may then construct an endomorphism object as in \cref{defi:hEnd}. On the other hand, Dugger~\cite[Propositions 5.5 and 5.6]{dugger_spectral} shows that if $\C$ is stable and presentable then it is Quillen equivalent to a spectral model category. Dugger's assumption of stable and presentable are more appropriate for our purposes since any algebraic model category is by definition presentable.
\end{rem}

Now if $\C$ is algebraic, then using~\cite{ds_enriched} we may construct an endomorphism ring spectrum in simplicial abelian groups denoted $$\mathrm{hEnd}_\mathrm{ad}(X) \in \mathrm{Ring}(\Sp^{\Sigma}(\sAb))$$ for $X \in \C$ as we now recall. If $\C$ is algebraic, it is Quillen equivalent to a combinatorial $\Ch$-model category $\C^{\mathrm{alg}}$. The model category $\C^{\mathrm{alg}}$ is stable, combinatorial and additive by~\cite[Corollary 6.9]{ds_enriched} and therefore is Quillen equivalent to a $\Sp^\Sigma(\sAb)$-model category $\C^\mathrm{ad}$ by~\cite[Theorem 1.3, \S 8.2]{ds_enriched}. 
\begin{defi}\label{defi:hEndad}
Let $\C$ be an algebraic model category and $X$ be a bifibrant object in $\C$. We then define $\mathrm{hEnd}_{\mathrm{ad}}(X) \in \mathrm{Ring}(\Sp^{\Sigma}(\sAb))$ by \[\mathrm{hEnd}_{\mathrm{ad}}(X) = \C^\mathrm{ad}_{\Sp^\Sigma(\sAb)}(\overline{X},\overline{X})\] where $\overline{X}$ is a bifibrant object of $\C^{\mathrm{ad}}$ which corresponds to $X$ under the Quillen equivalence $\C \simeq_Q \C^{\mathrm{ad}}$, and $\C^\mathrm{ad}_{\Sp^\Sigma(\sAb)}(-,-)$ denotes the enrichment of $\C^{\mathrm{ad}}$ in $\Sp^\Sigma(\sAb)$.
\end{defi}

\begin{prop}[{\cite[Proposition 1.5]{ds_enriched}}]\label{prop:hEndad}
Let $\C$ be an algebraic model category and $X \in \C$ be a bifibrant object. Then the endomorphism ring spectrum $\mathrm{hEnd}(X)$ is the Eilenberg-Mac~Lane spectrum associated to $\mathrm{hEnd}_{\mathrm{ad}}(X)$, that is,
\[\theta^*U(\mathrm{hEnd}_\mathrm{ad}(X)) \simeq \mathrm{hEnd}(X).\]
\end{prop}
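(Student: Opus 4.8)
The plan is to recognise $\theta^*U(\mathrm{hEnd}_{\mathrm{ad}}(X))$ as the endomorphism ring spectrum of $X$ computed in an alternative spectral model of $\C$ --- namely $\C^{\mathrm{ad}}$ regarded as $\Sp^\Sigma$-enriched --- and then to appeal to the fact that the homotopy type of this endomorphism ring spectrum does not depend on the chosen spectral model.

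First I would turn $\C^{\mathrm{ad}}$ into a spectral model category by \emph{restricting the enriching category} from $\Sp^\Sigma(\sAb)$ to $\Sp^\Sigma$. The left adjoint $Z$ of $U$ is strong symmetric monoidal (since $(Z,U)$ is a strong monoidal Quillen equivalence) and the extension of scalars $\theta_* = H\bbZ\wedge(-)$ is strong symmetric monoidal, so the composite
\[
Z\theta_*\colon \Sp^\Sigma \longrightarrow \Sp^\Sigma(\sAb)
\]
is a strong monoidal left Quillen functor with right adjoint $\theta^*U$. Restriction of the enriching category along any strong monoidal left Quillen functor turns a model category enriched, tensored and cotensored over the target into one with the analogous structure over the source, the new hom-objects being the right adjoint applied to the old ones; the enriched-category axioms come from the lax monoidal structure of the right adjoint, and the pushout-product and unit axioms transfer because the functor is strong monoidal and left Quillen. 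Applying this with $Z\theta_*$ makes $\C^{\mathrm{ad}}$ a spectral model category whose $\Sp^\Sigma$-valued hom-objects are $\theta^*U$ of the original ones. In particular, for the bifibrant object $\overline{X}$ of $\C^{\mathrm{ad}}$, the associated endomorphism ring spectrum is exactly $\theta^*U\bigl(\C^{\mathrm{ad}}_{\Sp^\Sigma(\sAb)}(\overline{X},\overline{X})\bigr)=\theta^*U(\mathrm{hEnd}_{\mathrm{ad}}(X))$; and since $\overline{X}$ is bifibrant and the right Quillen functors $\theta^*$ and $U$ preserve all weak equivalences and fibrant objects, this already has the correct homotopy type, with no further (co)fibrant replacement needed.

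It then remains to compare this spectral model of $\C$ with Dugger's spectral model $\C^{\mathrm{sp}}$. Both $\C^{\mathrm{sp}}$ and the spectral structure just put on $\C^{\mathrm{ad}}$ are spectral model categories joined to $\C$ by zigzags of Quillen equivalences of underlying model categories: $\C\simeq_Q\C^{\mathrm{sp}}$ directly, and $\C\simeq_Q\C^{\mathrm{alg}}\simeq_Q\C^{\mathrm{ad}}$ by the hypothesis that $\C$ is algebraic together with the Dugger--Shipley additivization; under either zigzag the bifibrant object $\overline{X}$ corresponds to $X$. Invoking the fact, underlying the Morita theory of~\cite{ss_module} and~\cite{dugger_spectral}, that the derived endomorphism ring spectrum of an object of a stable presentable model category is independent of the chosen spectral model, we conclude that $\mathrm{hEnd}(X)\simeq\theta^*U(\mathrm{hEnd}_{\mathrm{ad}}(X))$ in $\Ho(\Ring(\Sp^\Sigma))$.

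The step I expect to be the main obstacle is this last comparison. A Quillen equivalence of underlying model categories between two spectral model categories need not be a \emph{spectral} Quillen equivalence, so one cannot transport the $\Sp^\Sigma$-enrichment along it directly. The honest argument must show that Dugger's spectralization is natural enough that a zigzag of Quillen equivalences between $\C^{\mathrm{ad}}$ and $\C^{\mathrm{sp}}$ lifts to a zigzag of \emph{spectral} Quillen equivalences of their spectralizations, and that when a model category is already spectral its own enrichment computes the same derived mapping spectra as its spectralization. Assembling these comparisons is precisely the content of~\cite[Proposition 1.5]{ds_enriched}; the remaining ingredients above are formal manipulations with monoidal Quillen adjunctions.
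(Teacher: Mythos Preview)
The paper does not supply a proof of this proposition; it is stated with attribution to~\cite[Proposition~1.5]{ds_enriched} and used as a black box. So there is no argument in the paper to compare your proposal against.

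That said, your outline is a faithful sketch of what happens in~\cite{ds_enriched}. The base-change step is correct: restricting the $\Sp^\Sigma(\sAb)$-enrichment of $\C^{\mathrm{ad}}$ along the strong monoidal left Quillen functor $Z\theta_*$ produces a genuine $\Sp^\Sigma$-model structure whose hom-objects are $\theta^*U$ of the old ones, and for bifibrant $\overline{X}$ this computes $\theta^*U(\mathrm{hEnd}_{\mathrm{ad}}(X))$ on the nose, with no further replacement needed since $\theta^*$ and $U$ preserve weak equivalences. You are also right that the substantive content lies in the final comparison: one must know that the homotopy endomorphism ring spectrum is well-defined up to equivalence of ring spectra across zigzags of Quillen equivalences between spectral models, and that a model category already carrying a spectral enrichment computes the same derived mapping spectra as Dugger's spectralization. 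These are precisely the uniqueness and comparison results proved in~\cite{dugger_spectral} and~\cite{ds_enriched}, and you correctly flag this as the non-formal step rather than trying to reprove it.
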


In \cref{sec:shipley}, we recalled how to construct an Eilenberg-Mac~Lane spectrum associated to a DGA. This construction passes through $\mathrm{Ring}(\Sp^\Sigma(\sAb))$. Therefore, given an object of $\mathrm{Ring}(\Sp^\Sigma(\sAb))$ such as $\mathrm{hEnd}_\mathrm{ad}(X)$ as recalled from~\cite{ds_enriched} in \cref{defi:hEndad} one may produce a DGA as we now describe.

Using the functors described in~(\ref{eq:algebrafunctors}) one can define functors \[\mathbb{H}'\colon \mathrm{DGA}_\bbZ \to \mathrm{Ring}(\Sp^\Sigma(\sAb)) \quad \mathrm{and} \quad \Theta'\colon \mathrm{Ring}(\Sp^\Sigma(\sAb)) \to \mathrm{DGA}_\bbZ\] by $\mathbb{H}' = L^\mathrm{mon}cR$ and $\Theta' = Dc\phi^*N$ where $c$ denotes cofibrant replacement. We note that $\mathbb{H}'\Theta'$ and $\Theta'\mathbb{H}'$ are weakly equivalent to the identity functors since the adjunctions $(L^\mathrm{mon}, \phi^*N)$ and $(D,R)$ are both Quillen equivalences~\cite[Proposition 2.10]{shipley_hz} as described in \cref{sec:shipley}. 
\begin{defi}\label{defn:hEnddga}
Let $\C$ be an algebraic model category, and $X$ be a bifibrant object in $\C$. We may associate a DGA $\mathrm{hEnd}_\mathrm{dga}(X)$ to $X$, defined by \[\mathrm{hEnd}_{\mathrm{dga}}(X) = \Theta'(\mathrm{hEnd}_\mathrm{ad}(X)).\]
\end{defi}

\begin{prop}[{\cite[Proposition 1.7]{ds_enriched}}]\label{ds_algebraic}
Let $\C$ be a combinatorial $\Ch$-model category whose homotopy category is compactly generated. Then for any bifibrant object $X \in \C$, we have $\mathrm{hEnd}_\mathrm{dga}(X) \simeq \C_{\Ch}(X,X)$ where $\C_{\Ch}(-,-)$ denotes the enrichment of $\C$ in $\Ch$.
\end{prop}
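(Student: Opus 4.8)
The plan is to unwind the two constructions of DG-enrichment and identify them step by step through the chain of Quillen equivalences in~(\ref{eq:algebrafunctors}). The target $\C_{\Ch}(X,X)$ is the $\Ch$-enrichment of the combinatorial $\Ch$-model category $\C$, while $\mathrm{hEnd}_\mathrm{dga}(X) = \Theta'(\mathrm{hEnd}_\mathrm{ad}(X)) = Dc\phi^*N(\C^\mathrm{ad}_{\Sp^\Sigma(\sAb)}(\overline{X},\overline{X}))$, where $\C^\mathrm{ad}$ is the $\Sp^\Sigma(\sAb)$-model category Quillen equivalent to $\C$ furnished by~\cite[Theorem 1.3]{ds_enriched}. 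First I would recall precisely how~\cite{ds_enriched} produces $\C^\mathrm{ad}$ from $\C^\mathrm{alg} = \C$: it goes via the stabilization/Dold--Kan machinery of that paper, and by construction the enrichment $\C^\mathrm{ad}_{\Sp^\Sigma(\sAb)}(-,-)$ is compatible with the $\Ch$-enrichment of $\C$ under the derived functors relating $\Sp^\Sigma(\sAb)$ and $\Ch$. The key input is that~\cite[Proposition 1.7]{ds_enriched} is essentially this statement; so in large part the proof is a matter of citing that result and checking that our $\Theta'$ (defined via $Dc\phi^*N$, with cofibrant replacement of monoids) agrees with the comparison functor used there.

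The key steps, in order, would be: (1) record that $\C^\mathrm{ad}_{\Sp^\Sigma(\sAb)}(\overline X,\overline X)$ is a cofibrant monoid in $\Sp^\Sigma(\sAb)$ when $\overline X$ is bifibrant (so that no further cofibrant replacement changes its weak equivalence class, and the levelwise normalization $\phi^*N$ computes the derived functor); (2) apply the weak monoidal Quillen equivalence $(L^\mathrm{mon}, \phi^*N)$ from~(\ref{eq:algebrafunctors}) to pass from the enrichment in $\mathrm{Ring}(\Sp^\Sigma(\sAb))$ to one in $\mathrm{Ring}(\Sp^\Sigma(\Ch^+))$, using that the enrichment is preserved up to weak equivalence because the module/enrichment structures are transported along these Quillen equivalences as in~\cite[\S 3.3, Theorem 3.12]{ss_equivalences}; (3) apply the strong monoidal Quillen equivalence $(D,R)$ to land in $\mathrm{DGA}_\bbZ$, identifying the result with $\C_{\Ch}(X,X)$. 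At each stage one must check that the enriched hom-object is carried to the corresponding enriched hom-object — this is where compatibility of the model-categorical comparison with the monoidal structure is used.

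The main obstacle I expect is Step (2): verifying that the enrichment is genuinely preserved, not merely that the underlying monoid is. The adjunction $(L,\phi^*N)$ is only a \emph{weak} monoidal Quillen equivalence (the lax comparison map on $L$ goes the ``wrong'' way and is only a weak equivalence on cofibrant objects), so one cannot simply apply a strong monoidal functor to an enriched category and get an enriched category on the nose. One has to invoke the machinery of~\cite[\S 3.3]{ss_equivalences} on transport of modules and enrichments along weak monoidal Quillen equivalences, and check that the endomorphism object of a bifibrant object is computed correctly — i.e. that the relevant cofibrancy hypotheses are met so that the derived comparison is an equivalence of ring spectra, compatibly with the module structures. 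Since this is exactly the content of~\cite[Proposition 1.7]{ds_enriched}, the cleanest proof is to observe that $\mathrm{hEnd}_\mathrm{dga}(X) = \Theta'(\mathrm{hEnd}_\mathrm{ad}(X))$ is, by our definitions of $\Theta'$ and $\mathrm{hEnd}_\mathrm{ad}$, precisely the DGA produced by the comparison of~\cite{ds_enriched}, and hence the result is immediate from loc.\ cit.; the remaining work is to spell out that our normalization conventions match theirs and that no fibrant replacements are needed since all the right adjoints in~(\ref{eq:algebrafunctors}) preserve weak equivalences.
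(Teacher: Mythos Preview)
The paper does not supply its own proof of this proposition: it is stated as a direct citation of \cite[Proposition~1.7]{ds_enriched} and is used as a black box. Your proposal correctly identifies this, ultimately concluding that the result is immediate from loc.\ cit.\ once one checks that the functor $\Theta'$ and the object $\mathrm{hEnd}_\mathrm{ad}(X)$ defined here match Dugger--Shipley's conventions; the extended sketch you give of the underlying argument is more than the paper itself provides.
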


We can now give the aforementioned characterization of algebraic model categories. The reader may find it helpful to refer to \cref{fig:ends} for a schematic of the relations between the different endomorphism objects before reading the following proof.
\begin{thm}\label{thm:algebraic}
Let $\mathcal{C}$ be a monogenic, algebraic model category, and write $X$ for a bifibrant compact generator. Then $\mathcal{C}$ is Quillen equivalent to the category of modules over the DGA $\mathrm{hEnd}_\mathrm{dga}(X)$.
\end{thm}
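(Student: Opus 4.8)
The plan is to assemble the result by chaining together the Morita-theoretic equivalences already recorded in this section with Shipley's algebraicization theorem. Since $\C$ is monogenic, algebraic, and (being algebraic) presentable, \cref{thm:morita} applies and gives a Quillen equivalence $\C \simeq_Q \hend(X)\Mod(\Sp)$, where $\hend(X)$ is the symmetric ring spectrum of \cref{defi:hEnd}. So it suffices to identify $\hend(X)\Mod(\Sp)$ with $\hend_{\mathrm{dga}}(X)\Mod(\Ch)$ up to Quillen equivalence.

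First I would bring in the additive refinement: since $\C$ is algebraic, \cref{prop:hEndad} tells us that $\hend(X) \simeq \theta^* U(\hend_{\mathrm{ad}}(X))$ as ring spectra, where $\hend_{\mathrm{ad}}(X) \in \Ring(\Sp^\Sigma(\sAb))$. Next I would push $\hend_{\mathrm{ad}}(X)$ through the Quillen equivalences of~(\ref{eq:algebrafunctors}): applying the weak monoidal Quillen equivalences $(L^{\mathrm{mon}},\phi^*N)$ and $(D,R)$ gives a DGA, which by \cref{defn:hEnddga} is exactly $\hend_{\mathrm{dga}}(X) = \Theta'(\hend_{\mathrm{ad}}(X))$, and moreover $\mathbb{H}'\Theta'(\hend_{\mathrm{ad}}(X)) \simeq \hend_{\mathrm{ad}}(X)$ as noted after \cref{defn:hEnddga}. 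Taking module categories, the chain of Quillen equivalences in~(\ref{eq:algebrafunctors}) (restricted to modules rather than monoids, using~\cite[Theorem 3.12]{ss_equivalences}) gives $\hend_{\mathrm{ad}}(X)\Mod(\Sp^\Sigma(\sAb)) \simeq_Q \hend_{\mathrm{dga}}(X)\Mod(\Ch)$. Passing from $\Sp^\Sigma(\sAb)$-modules to $H\bbZ$-modules via the strong monoidal Quillen equivalence $(Z,U)$, and then from $H\bbZ$-modules to $\Sp$-modules via \cref{rest-mod} along $\theta\colon \s \to H\bbZ$ (this is where $\hend(X) = \theta^*U(\hend_{\mathrm{ad}}(X))$ enters), assembles into
\[
\C \simeq_Q \hend(X)\Mod(\Sp) \simeq_Q \hend_{\mathrm{dga}}(X)\Mod(\Ch),
\]
which is the desired statement. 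Essentially this is \cref{prop:Amod}(i) applied to the $H\bbZ$-algebra $U(\hend_{\mathrm{ad}}(X))$, combined with \cref{thm:morita} and \cref{prop:hEndad}.

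The main obstacle is bookkeeping rather than conceptual: one must check that the various ``module over a monoid'' categories line up correctly under the adjoint lifting of \S2.1 — in particular that the lifted functors $L^A$ are themselves Quillen equivalences on module categories (this is the content of~\cite[Theorem 3.12]{ss_equivalences} in the weak monoidal case, and of \cref{rest-mod} for the restriction-of-scalars step), and that cofibrant replacements of monoids introduced in the definitions of $\mathbb{H}'$, $\Theta'$ do not disturb the module categories up to Quillen equivalence. I would also need to be slightly careful that $\overline{X}$, the image of $X$ under $\C \simeq_Q \C^{\mathrm{sp}}$ (resp. $\C \simeq_Q \C^{\mathrm{ad}}$), is genuinely a compact generator in $\C^{\mathrm{sp}}$ (resp. $\C^{\mathrm{ad}}$), so that \cref{thm:morita} and \cref{ds_algebraic} apply with the correct endomorphism objects; this follows because Quillen equivalences induce triangulated equivalences on homotopy categories, preserving compactness and generation. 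None of these steps is hard individually, but the schematic in \cref{fig:ends} is there precisely because keeping track of which enriching category one is working over at each stage is the crux of the argument.
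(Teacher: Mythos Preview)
Your proposal is correct and follows essentially the same route as the paper's proof: both chain together \cref{thm:morita}, \cref{prop:hEndad}, the fact that $\mathbb{H}'$ and $\Theta'$ are inverse equivalences, and \cref{prop:Amod}. The only cosmetic difference is that the paper packages the middle steps by showing the ring spectrum weak equivalence $\theta^*\mathbb{H}(\hend_{\mathrm{dga}}(X)) \simeq \hend(X)$ and then invoking \cref{prop:Amod}(ii), whereas you invoke \cref{prop:Amod}(i) on $R = U(\hend_{\mathrm{ad}}(X))$; these amount to the same computation.
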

\begin{proof}
We will show that there is a zig-zag of Quillen equivalences given by 
\[\C \simeq_Q \mathrm{hEnd}(X)\Mod \simeq_Q \theta^*\mathbb{H}(\mathrm{hEnd}_\mathrm{dga}(X))\Mod \simeq_Q \mathrm{hEnd}_\mathrm{dga}(X)\Mod.\]
The first Quillen equivalence holds by \cref{thm:morita} and the final Quillen equivalence by \cref{prop:Amod}. Therefore, it remains to justify the second Quillen equivalence. We have
\begin{alignat*}{2}
\theta^*\mathbb{H}(\mathrm{hEnd}_\mathrm{dga}(X)) 
&= \theta^*\mathbb{H}\Theta'(\mathrm{hEnd}_\mathrm{ad}(X)) &\quad &\text{by definition of $\mathrm{hEnd}_\mathrm{dga}(X)$}  \\
&= \theta^*U\mathbb{H}'\Theta'(\mathrm{hEnd}_\mathrm{ad}(X)) &\quad &\text{since $\mathbb{H} = U\mathbb{H}'$ by definition}  \\
&\simeq \theta^*U(\mathrm{hEnd}_\mathrm{ad}(X)) &\quad &\text{as $\Theta'$ and $\mathbb{H}'$ are inverse equivalences} \\
&\simeq \mathrm{hEnd}(X)\Mod &\quad &\text{by \cref{prop:hEndad}}
\end{alignat*}
and hence the second Quillen equivalence holds. 
\end{proof}

By combining the previous theorem with \cref{ds_algebraic} one obtains the following corollary.
\begin{cor}
Let $\C$ be a monogenic, combinatorial, $\Ch$-enriched model category, and write $X$ for a bifibrant compact generator. Then $\C$ is Quillen equivalent to the category of modules over the DGA $\C_{\Ch}(X,X)$. \qed
\end{cor}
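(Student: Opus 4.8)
The plan is to deduce the corollary directly from \cref{thm:algebraic} together with \cref{ds_algebraic}, so almost all of the work has already been done. First I would observe that a monogenic, combinatorial, $\Ch$-enriched model category $\C$ is in particular an algebraic model category in the sense of \cref{defn:algebraic} (it is Quillen equivalent to itself), so \cref{thm:algebraic} applies verbatim: $\C$ is Quillen equivalent to $\mathrm{hEnd}_\mathrm{dga}(X)\Mod$ for any bifibrant compact generator $X$.

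The only remaining point is to identify the DGA $\mathrm{hEnd}_\mathrm{dga}(X)$ with the honest enrichment $\C_{\Ch}(X,X)$. For this I would invoke \cref{ds_algebraic}, which says precisely that for a combinatorial $\Ch$-model category whose homotopy category is compactly generated, $\mathrm{hEnd}_\mathrm{dga}(X) \simeq \C_{\Ch}(X,X)$ as DGAs, for any bifibrant $X$. The hypothesis that $\Ho(\C)$ is compactly generated is supplied by monogenicity: the compact generator $X$ generates $\Ho(\C)$, so $\Ho(\C)$ is compactly generated (by a single object). Thus the quasi-isomorphism of DGAs $\mathrm{hEnd}_\mathrm{dga}(X) \simeq \C_{\Ch}(X,X)$ holds, and a quasi-isomorphism of DGAs induces a Quillen equivalence on their module categories (restriction and extension of scalars), so $\mathrm{hEnd}_\mathrm{dga}(X)\Mod \simeq_Q \C_{\Ch}(X,X)\Mod$.

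Stringing these together gives the zig-zag $\C \simeq_Q \mathrm{hEnd}_\mathrm{dga}(X)\Mod \simeq_Q \C_{\Ch}(X,X)\Mod$, which is the claim. There is essentially no obstacle here beyond bookkeeping: the substantive content — the passage $\C \simeq_Q \mathrm{hEnd}_\mathrm{dga}(X)\Mod$ and the comparison of the three flavours of endomorphism object — is exactly \cref{thm:algebraic}, and the identification of $\mathrm{hEnd}_\mathrm{dga}(X)$ with $\C_{\Ch}(X,X)$ is \cref{ds_algebraic}. If anything needs care, it is only checking that the hypotheses of \cref{ds_algebraic} (combinatorial, compactly generated homotopy category) are met, which follows immediately from the hypotheses of the corollary. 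Hence the proof is a one-line combination, as the ``\qed'' in the statement already signals.
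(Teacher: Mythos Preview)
Your proposal is correct and matches the paper's own approach exactly: the paper states the corollary immediately after \cref{thm:algebraic} with the remark that it follows by combining that theorem with \cref{ds_algebraic}, and provides no further proof. Your only addition is the explicit observation that a quasi-isomorphism of DGAs induces a Quillen equivalence of module categories, which is standard and implicit in the paper's treatment.
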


\begin{figure}[H]
\centering
\framebox{
\begin{tikzcd}[row sep=0cm, ampersand replacement=\&]
\mathrm{hEnd}(X) \& \mathrm{hEnd}_\mathrm{ad}(X) \arrow[l, mapsto, "\mathrm{forget}"', "{(\ref{prop:hEndad})}"] \arrow[r, mapsto, "\Theta'", "{(\ref{defn:hEnddga})}"'] \& \mathrm{hEnd}_\mathrm{dga}(X) \arrow[rr, leftrightsquigarrow, "\substack{\text{if $\C$ is } \\ \text{$\Ch$-enriched} \\ {(\ref{ds_algebraic}})}"', "\simeq"] \& \& \C_{\Ch}(X,X) \\
\rin \& \rin \& \rin \& \& \rin \\
\mathrm{Ring}(\Sp^\Sigma) \& \mathrm{Ring}(\Sp^\Sigma(\sAb)) \& \mathrm{DGA}_\bbZ \& \& \mathrm{DGA}_\bbZ
\end{tikzcd}}
\caption{The various endomorphism objects associated to a bifibrant object $X$ in an algebraic model category $\C$ and how they relate to one another.}
\label{fig:ends}
\end{figure}

\subsection{Detecting algebraicity}\label{detect_algebraicity}
We can now use the Morita theory results from above to give several criteria for when stable model categories are algebraic. Firstly, we give the following direct consequence of \cref{thm:algebraic}.
\begin{thm}\label{prop:algebraicisDGmod}
Let $\C$ be a monogenic, stable model category. Then $\C$ is algebraic if and only if $\C$ is Quillen equivalent to $\A\Mod$ for a DGA $\A$. \qed
\end{thm}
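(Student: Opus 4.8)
The plan is to prove the two implications separately; both are short, the forward one being essentially a repackaging of \cref{thm:algebraic} and the reverse one a verification that modules over a DGA furnish an instance of \cref{defn:algebraic}.

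For the forward direction, I would start from a monogenic algebraic model category $\C$ and choose a bifibrant compact generator $X$: a compact generator exists by the monogenicity hypothesis, and we may take it to be bifibrant since bifibrant replacement is a weak equivalence, hence an isomorphism in $\Ho(\C)$, so the replacement is still a compact generator. Then \cref{thm:algebraic} supplies a zig-zag of Quillen equivalences $\C \simeq_Q \mathrm{hEnd}_{\mathrm{dga}}(X)\Mod$, and we take $\A = \mathrm{hEnd}_{\mathrm{dga}}(X)$. This direction requires no new work beyond citing \cref{thm:algebraic}.

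For the reverse direction, suppose $\C \simeq_Q \A\Mod$ for a DGA $\A$. By \cref{defn:algebraic} it suffices to check that $\A\Mod = \A\Mod(\Ch)$ is a combinatorial $\Ch$-enriched model category. I would argue as follows: $\Ch$ with its projective model structure is a combinatorial monoidal model category satisfying the monoid axiom, so $\A\Mod$ carries a model structure whose fibrations and weak equivalences are created by the forgetful functor to $\Ch$ (\cite[Theorem 4.1]{ss_algebras}); this model structure is combinatorial because $\A\Mod$ is locally presentable and its generating (trivial) cofibrations are obtained by applying $\A \otimes (-)$ to those of $\Ch$; and $\A\Mod$ is naturally enriched, tensored and cotensored over $\Ch$, with the pushout-product and unit axioms inherited from $\Ch$. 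Hence $\A\Mod$ witnesses the algebraicity of $\C$.

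I do not expect a genuine obstacle here; the only point needing care is bookkeeping of the hypotheses. Monogenicity is used only in the forward direction, to invoke Morita theory via \cref{thm:algebraic}; in the reverse direction it is automatic, since $\A$ is a compact generator of $\A\Mod$, so the two sides of the claimed equivalence are indeed consistent with the standing assumption on $\C$.
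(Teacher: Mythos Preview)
Your proposal is correct and follows exactly the approach the paper intends: the paper records this theorem with a bare \qed, having noted just before that it is a ``direct consequence of \cref{thm:algebraic}'', so your forward direction is precisely that citation and your reverse direction supplies the routine verification (that $\A\Mod(\Ch)$ is combinatorial and $\Ch$-enriched) which the paper leaves implicit.
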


\begin{prop}\label{prop:alg_triang}
Let $\C$ be a monogenic, stable model category. If $\C$ is algebraic then it is triangulated algebraic. 
\end{prop}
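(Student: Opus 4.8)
The plan is to bootstrap off the Morita-theoretic characterization of algebraicity already established, replacing $\C$ by an explicit category of differential graded modules and then recognizing its homotopy category as the derived category of a DGA. Since $\C$ is monogenic, stable and algebraic, I would fix a bifibrant compact generator $X$ and apply \cref{thm:algebraic} (equivalently \cref{prop:algebraicisDGmod}) to obtain a DGA $\A = \mathrm{hEnd}_\mathrm{dga}(X)$ together with a zig-zag of Quillen equivalences $\C \simeq_Q \A\Mod(\Ch)$.

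Next I would observe that each of the model categories appearing in this zig-zag is stable: $\C$ is stable by hypothesis, $\A\Mod(\Ch)$ is stable since $\Ch$-enriched model categories are stable (as recalled in \cref{rem:triangulatedequivalence}), and the same remark applies to the intermediate module categories. A Quillen equivalence between stable model categories induces a triangulated equivalence on homotopy categories, so composing along the zig-zag yields a triangulated equivalence $\Ho(\C) \simeq \Ho(\A\Mod(\Ch))$.

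Finally I would identify $\Ho(\A\Mod(\Ch))$ with the derived category $\mathsf{D}(\A)$ of the DGA $\A$, regarded as a dg-category with a single object; since $\mathsf{D}(\A)$ is the stable category of a Frobenius category (for instance the category of dg $\A$-modules with degreewise split short exact sequences, cf.~\cite{Keller06}), this shows that $\C$ is triangulated algebraic. One can additionally remark that $\Ho(\C)$ is compactly generated because $\C$ is monogenic, so the alternative formulation in the definition of triangulated algebraic applies directly. I do not expect a serious obstacle here; the only point requiring care is ensuring the zig-zag of Quillen equivalences passes only through stable model categories, so that the resulting equivalence of homotopy categories is genuinely triangulated, and that the homotopy category of modules over a DGA is its derived category.
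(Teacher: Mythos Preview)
Your proposal is correct and follows essentially the same approach as the paper: invoke \cref{prop:algebraicisDGmod} (via \cref{thm:algebraic}) to replace $\C$ by $\A\Mod$ for a DGA $\A$, note that the zig-zag of Quillen equivalences passes only through stable model categories so that the induced equivalence on homotopy categories is triangulated, and identify $\Ho(\A\Mod)$ with $\mathsf{D}(\A)$. Your write-up is slightly more explicit than the paper's in spelling out why $\mathsf{D}(\A)$ satisfies the definition of triangulated algebraic, but the argument is the same.
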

\begin{proof}
By \cref{prop:algebraicisDGmod}, if $\C$ is algebraic then $\C$ is Quillen equivalent to $\A\Mod$ for a DGA $\A$ and the derived equivalence is triangulated since the zig-zag of Quillen equivalences of \cref{thm:algebraic} only passes through stable model categories. Hence, $\C$ is triangulated algebraic. 
\end{proof}

\begin{thm}\label{prop:algebraicEMLHZalg}
Let $\C$ be a monogenic, stable model category and write $X$ for a bifibrant compact generator. Then $\C$ is algebraic if and only if $\mathrm{hEnd}(X)$ is weakly equivalent to an $H\bbZ$-algebra as a ring spectrum.
\end{thm}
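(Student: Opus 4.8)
The plan is to prove both implications using the Morita-theoretic machinery developed earlier in this section, together with Shipley's algebraicization results from \cref{sec:shipley}. The key observation is that algebraicity of $\C$ is controlled by the endomorphism ring spectrum $\hend(X)$, and that passing to an $H\bbZ$-algebra is precisely what Shipley's correspondence allows.

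For the forward direction, suppose $\C$ is algebraic. By \cref{prop:hEndad}, the endomorphism ring spectrum $\hend(X)$ is the Eilenberg--Mac~Lane spectrum $\theta^*U(\hend_\mathrm{ad}(X))$ associated to the object $\hend_\mathrm{ad}(X) \in \mathrm{Ring}(\Sp^\Sigma(\sAb))$. Since $U$ is lax monoidal, $U(\hend_\mathrm{ad}(X))$ is a monoid in $\Sp^\Sigma(\sAb)$, and via the strong monoidal Quillen equivalence $(Z,U)$ of~(\ref{eq:modulefunctors})--(\ref{eq:algebrafunctors}) it corresponds to an $H\bbZ$-algebra; concretely $\hend(X) = \theta^*(A)$ for an $H\bbZ$-algebra $A$, so $\hend(X)$ is weakly equivalent (as a ring spectrum) to an $H\bbZ$-algebra.

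For the converse, suppose $\hend(X) \simeq \theta^*(A)$ for some $H\bbZ$-algebra $A$, where the equivalence is as ring spectra. By \cref{thm:morita}, $\C \simeq_Q \hend(X)\Mod$, and since weakly equivalent ring spectra have Quillen equivalent module categories, $\hend(X)\Mod \simeq_Q (\theta^*A)\Mod(\Sp)$. Now apply \cref{prop:Amod}(i): $(\theta^*A)\Mod(\Sp) \simeq_Q \Theta(A)\Mod(\Ch)$. The target is a combinatorial $\Ch$-enriched model category (the category of modules over a DGA is enriched, tensored, and cotensored over $\Ch$, and is combinatorial since $\Ch$ is), so $\C$ is algebraic by \cref{defn:algebraic}.

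The main obstacle is making precise the claim in the forward direction that a monoid in $\Sp^\Sigma(\sAb)$, when pushed along the forgetful-type functors, genuinely produces an $H\bbZ$-\emph{algebra} rather than merely an $H\bbZ$-module with extra structure, and that the equivalence $\theta^*U(\hend_\mathrm{ad}(X)) \simeq \hend(X)$ of \cref{prop:hEndad} can be promoted to an equivalence of \emph{ring} spectra (it is stated there only as spectra, though it follows from the construction in~\cite{ds_enriched} that it is multiplicative). One should also be slightly careful that ``weakly equivalent as a ring spectrum'' is the right notion: a zig-zag of weak equivalences of ring spectra suffices, since such a zig-zag induces a zig-zag of Quillen equivalences on module categories. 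Once these points are granted, both directions are short; the real content has already been packaged into \cref{thm:morita}, \cref{prop:hEndad}, and \cref{prop:Amod}.
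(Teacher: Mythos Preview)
Your proposal is correct and follows essentially the same route as the paper: the forward direction invokes \cref{prop:hEndad} to exhibit $\hend(X)$ as $\theta^*U(\hend_{\mathrm{ad}}(X))$ with $U(\hend_{\mathrm{ad}}(X))$ an $H\bbZ$-algebra, and the converse chains together \cref{thm:morita} and \cref{prop:Amod}(i) (the paper cites the equivalent \cref{thm:shipley} and then appeals to \cref{prop:algebraicisDGmod} rather than checking the definition directly). Your caution about whether the equivalence in \cref{prop:hEndad} is one of \emph{ring} spectra is well-placed but not a gap---the underlying result~\cite[Proposition~1.5]{ds_enriched} does give a multiplicative comparison, and the paper's proof simply takes this for granted.
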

\begin{proof}
If $\C$ is algebraic, then \cref{prop:hEndad} shows that $\mathrm{hEnd}(X)$ is weakly equivalent to the $H\bbZ$-algebra $U\mathrm{hEnd}_\mathrm{ad}(X)$
as a ring spectrum. Conversely, if $\mathrm{hEnd}(X)$ is weakly equivalent to an $H\bbZ$-algebra $R$ as a ring spectrum, then \[\C \simeq_Q \mathrm{hEnd}(X)\Mod \simeq_Q R\Mod \simeq_Q \Theta(R)\Mod\] by Theorems~\ref{thm:morita} and~\ref{thm:shipley}. The object $\Theta(R)$ is a DGA and therefore by \cref{prop:algebraicisDGmod} it follows that $\C$ is algebraic.
\end{proof}

In the case when $\C$ is the category of modules over a ring spectrum, we can give more explicit versions of the previous results.
\begin{cor}\label{cor:Rmodalgebraic}
Let $R$ be a ring spectrum. Then $R\Mod$ is algebraic if and only if $R$ is weakly equivalent to an $H\bbZ$-algebra as a ring spectrum. In particular, if $R\Mod$ is algebraic then $R$ is a generalized Eilenberg-Mac~Lane spectrum in the sense of \cref{defi:GEML}.
\end{cor}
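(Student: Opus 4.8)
The plan is to deduce this corollary directly from \cref{prop:algebraicEMLHZalg} by taking the ring spectrum $R$ itself to be its own compact generator, and then extract the ``in particular'' clause from \cref{prop:CG}. First I would recall that for any ring spectrum $R$, the category $R\Mod$ is a monogenic stable model category: the free module $R$ is a compact generator, since $\Ho(R\Mod)([R,-])$ is just the underlying homotopy groups functor, which detects trivial objects and commutes with coproducts. After replacing $R$ by a bifibrant model if necessary (which does not change its weak equivalence type), we have $\mathrm{hEnd}(R) \simeq R$ as ring spectra, because the derived endomorphism ring spectrum of the free rank-one module over $R$ recovers $R$ itself. Hence \cref{prop:algebraicEMLHZalg} applied with $\C = R\Mod$ and $X = R$ gives: $R\Mod$ is algebraic if and only if $\mathrm{hEnd}(R) \simeq R$ is weakly equivalent to an $H\bbZ$-algebra as a ring spectrum. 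This is the first assertion.

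For the ``in particular'' clause, suppose $R\Mod$ is algebraic. By the first part, $R$ is weakly equivalent as a ring spectrum — hence in particular as a spectrum — to an $H\bbZ$-algebra, which forgets to an $H\bbZ$-module. By \cref{prop:CG}, a spectrum weakly equivalent to an $H\bbZ$-module is exactly a generalized Eilenberg-Mac~Lane spectrum in the sense of \cref{defi:GEML}. Therefore $R$ is a generalized Eilenberg-Mac~Lane spectrum, as claimed.

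The only point that requires a little care — and the main (mild) obstacle — is the identification $\mathrm{hEnd}(R) \simeq R$ as ring spectra. Here one should be slightly careful that $\mathrm{hEnd}(-)$ as defined in \cref{defi:hEnd} is computed in a spectral model $\C^{\mathrm{sp}}$ of $R\Mod$ rather than in $R\Mod$ directly; but the whole point of \cref{thm:morita} is precisely that the endomorphism ring spectrum of a bifibrant compact generator recovers a ring spectrum whose module category is Quillen equivalent to the original one, and when the original category is already $R\Mod$ with generator $R$, this endomorphism ring spectrum is weakly equivalent to $R$ as a ring spectrum (this is the standard Morita-theoretic fact underlying \cite[Theorem 3.1.1]{ss_module} and \cite[Theorem 8.1]{dugger_spectral}). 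So no new work is needed beyond invoking these results; the corollary is essentially a specialization of \cref{prop:algebraicEMLHZalg}.
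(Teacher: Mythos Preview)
Your proposal is correct and follows essentially the same route as the paper: specialize \cref{prop:algebraicEMLHZalg} to $\C = R\Mod$ with compact generator $R$, identify $\mathrm{hEnd}(R)\simeq R$, and then invoke \cref{prop:CG} for the ``in particular'' clause. The only minor difference is that the paper dispatches your ``mild obstacle'' more directly: since $R\Mod$ is already a spectral model category with enrichment $\Hom_R(-,-)$, no passage to an auxiliary $\C^{\mathrm{sp}}$ is needed and one has $\mathrm{hEnd}(R)=\Hom_R(R,R)\simeq R$ on the nose.
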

\begin{proof}
The category $R\Mod$ is spectral with enrichment in spectra given by $\Hom_R(-,-)$. Therefore the endomorphism object $\mathrm{hEnd}(R)$ associated to the compact generator $R$ is just $R$. The first claim then follows from \cref{prop:algebraicEMLHZalg}. As for the second claim, if $R\Mod$ is algebraic, then the first part of this corollary tells us that $R$ is weakly equivalent to an $H\bbZ$-algebra and hence an $H\bbZ$-module, and the result then follows from \cref{prop:CG}.
\end{proof}

\begin{rem}
 It is reasonable to wonder if in \cref{prop:algebraicEMLHZalg} it is instead enough to require that $\mathrm{hEnd}(X)$ is an $H\bbZ$-module (since it is naturally a ring spectrum). In \cref{notHZalg} we give an example which shows that this is false.
\end{rem}

\section{Detecting algebraicity via $\Ho(\Sp)$-actions}\label{sec:actions}

For any stable model category $\C$ there is an action of $\Ho(\Sp)$ on $\Ho(\C)$. In this section we will show that being algebraic implies that this $\Ho(\Sp)$-action factors over a $\mathsf{D}(\bbZ)$-action. Furthermore, we will consider the notion of a trivial $\Ho(\Sp)$-action on $\Ho(\C)$ and explore how these concepts interact. 

\subsection{Actions of $\Ho(\Sp)$ and $\mathsf{D}(\bbZ)$}
A useful structure to have on a stable model category $\C$ is a tensor with spectra $X \wedge A$ where $X \in \C$ and $A \in \Sp$, and mapping spectra $\Hom(X,Y) \in \Sp$ behaving in a homotopically useful way. This would mean asking for $\C$ to be a ``spectral'' model category, which is a very strong assumption. However, it has been shown by \cite{lenhardt2012stable} that such a structure at least exists up to homotopy.

\begin{thm}[{\cite[Theorem 6.3]{lenhardt2012stable}}]\label{thm:closedaction}
For every stable model category $\C$, there is a bifunctor 
\[
- \wedge^L -\colon \Ho(\C) \times \Ho(\Sp) \longrightarrow \Ho(\C)
\]
making  $\Ho(\C)$ a closed module category over the stable homotopy category $\Ho(\Sp)$ in the sense of~\cite[Definition 4.1.6]{hovey2007model}. For $X \in \C$, we call the right adjoint 
\[
R\Hom(X,-)\colon \Ho(\C) \to \Ho(\Sp)
\]
the \emph{homotopy mapping spectrum functor}.
\end{thm}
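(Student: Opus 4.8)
The plan is to reduce this to Hovey's theory of framings together with a stabilization procedure. \emph{First}, recall that for any model category $\C$ the cosimplicial and simplicial resolutions of Hovey \cite[Chapter 5]{hovey2007model} produce a two-variable adjunction $\mathrm{sSet} \times \C \to \C$ which is homotopically well behaved, so that $\Ho(\C)$ becomes a closed module over $\Ho(\mathrm{sSet})$; when $\C$ is pointed --- as any stable model category is --- this refines to a closed module structure over $\Ho(\mathrm{sSet}_*)$, with tensor written $K \wedge X$ for $K$ a pointed simplicial set and $X \in \C$. By construction $S^1 \wedge X$ represents the suspension $\Sigma X$, and more generally $S^n \wedge X$ represents $\Sigma^n X$.

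\emph{Second}, use stability to upgrade the $\mathrm{sSet}_*$-action to an action of symmetric spectra. Since $\C$ is stable, $\Sigma$ is an equivalence on $\Ho(\C)$, so $\Sigma^{-n}$ makes sense. Following Lenhardt, one constructs for each object $X$ a \emph{stable frame}: a coherent family of cosimplicial frames together with compatibility data modelling the structure maps of a $\Sigma$-spectrum. Given a cofibrant symmetric spectrum $A$ with levels $A_n$, one then sets $X \wedge^L A := \hocolim_n \Sigma^{-n}(X \wedge A_n)$, the colimit being formed along the maps induced by the structure maps $S^1 \wedge A_n \to A_{n+1}$ and the stability equivalence. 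The unit axiom holds because $\mathbb{S}_n = S^n$ gives $\Sigma^{-n}(X \wedge S^n) \simeq X$ compatibly in $n$, so $X \wedge^L \mathbb{S} \simeq X$; associativity $X \wedge^L (A \wedge B) \simeq (X \wedge^L A) \wedge^L B$ follows from the corresponding property of the simplicial framing and the definition of the smash product of symmetric spectra.

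\emph{Third}, establish the \emph{closed} part. The functor $- \wedge^L A\colon \Ho(\C) \to \Ho(\C)$ is exact and coproduct-preserving, being built from such functors, but since $\Ho(\C)$ need not satisfy Brown representability one should not simply invoke an adjoint functor theorem; instead the right adjoint is read off from the mapping-space component of the stable frame, which assembles into a mapping spectrum $R\Hom(X,-)\colon \Ho(\C) \to \Ho(\Sp)$, and the adjunction $[X \wedge^L A, Y] \cong [A, R\Hom(X,Y)]$ is obtained levelwise from the simplicial framing adjunction and then passed through the (co)limit defining $X \wedge^L A$. The coherence axioms of a closed module category in the sense of \cite[Definition 4.1.6]{hovey2007model} then reduce to the ones already known for the $\Ho(\mathrm{sSet}_*)$-action, together with the formal compatibilities built into the stabilization.

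The main obstacle is the homotopical bookkeeping in the second step: making $X \wedge^L A$ into a genuine \emph{bifunctor} $\Ho(\C) \times \Ho(\Sp) \to \Ho(\C)$ with coherent unit and associativity isomorphisms, rather than a construction defined only on objects and depending a priori on choices of frames. One must check that the stable frames can be chosen functorially (or that different choices give canonically isomorphic results) and that the construction is compatible with the monoidal structure on $\Sp^\Sigma$; this coherence is where essentially all of the work in Lenhardt's argument is concentrated. An alternative, less laborious route --- but one that only covers the presentable case --- is to invoke Dugger's theorem \cite[Propositions 5.5 and 5.6]{dugger_spectral} that a stable presentable model category is Quillen equivalent to a spectral model category $\C^{\mathrm{sp}}$: then $\Ho(\C) \simeq \Ho(\C^{\mathrm{sp}})$ inherits its closed $\Ho(\Sp)$-module structure directly from the honest $\Sp^\Sigma$-enrichment, tensor and cotensor of $\C^{\mathrm{sp}}$. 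The framing argument is preferable precisely because it applies to all stable model categories without a presentability hypothesis.
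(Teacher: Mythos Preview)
The paper does not supply its own proof of this theorem: it is stated with a direct citation to \cite[Theorem 6.3]{lenhardt2012stable} and used as a black box thereafter. There is therefore nothing in the paper to compare your argument against.

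As a sketch of Lenhardt's actual proof your outline is broadly faithful in spirit --- framings give an $\Ho(\mathrm{sSet}_*)$-module structure, stability lets one upgrade this to spectra via ``stable frames'', and the closed structure comes from the mapping-space side of the framing --- and you correctly isolate the coherence bookkeeping as the real content. Two technical points are slightly off. First, Lenhardt works with \emph{sequential} spectra, not symmetric spectra; the symmetric-group equivariance plays no role in his construction, and invoking the smash product of symmetric spectra for associativity, as you do, is not how he proceeds. Second, the formula $X \wedge^L A = \hocolim_n \Sigma^{-n}(X \wedge A_n)$ is a reasonable heuristic but is not the definition Lenhardt uses: he builds the action via a left Kan extension from stable frames, and the homotopy-colimit description you give would require separate justification (and some care, since levelwise smashing with a cofibrant spectrum need not be homotopically meaningful without further hypotheses). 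Your alternative route via Dugger's spectral enrichment is correctly stated and correctly caveated as applying only in the presentable case.
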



This action satisfies the expected properties, such as the following.
\begin{itemize}
\item The functor $- \wedge^L -$ is exact in both variables.
\item We have $X \wedge^L \mathbb{S} \cong X$ for all $X \in \Ho(\C)$.
\item A Quillen functor $\C \longrightarrow \D$ induces a $\Ho(\Sp)$-module functor $\Ho(\C) \longrightarrow \Ho(\D)$.
\item In particular, a Quillen equivalence induces an equivalence of $\Ho(\Sp)$-modules.
\end{itemize}

\begin{rem}
If $\C$ is itself a spectral model category, then the module structure from above theorem agrees with the action derived from the spectral structure. This means that if $\C$ is spectral and presentable, then $\mathrm{hEnd}(X)$ from \cref{defi:hEnd} is weakly equivalent to $R\Hom(X,X).$ 
\end{rem}

In light of the relationship between algebraicity and $H\bbZ$-algebras explored in the previous section, we now explore when the action of $\Ho(\Sp)$ factors over $\mathsf{D}(\bbZ)$.

\begin{prop}\label{prop:dz}
Let $\C$ be a stable model category. If the $\Ho(\Sp)$-module structure from \cref{thm:closedaction} factors over $\mathsf{D}(\bbZ)$, then the homotopy mapping spectra $R\Hom(X,Y)$ are $H\bbZ$-modules for all $X,Y \in \C$, and hence are generalized Eilenberg-Mac~Lane spectra in the sense of \cref{defi:GEML}.
\end{prop}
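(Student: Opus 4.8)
The plan is to run a formal argument resting on uniqueness of right adjoints. Recall from \cref{thm:closedaction} that the homotopy mapping spectrum functor $R\Hom(X,-)\colon \Ho(\C) \to \Ho(\Sp)$ is, by construction, the right adjoint of $X \wedge^L -\colon \Ho(\Sp) \to \Ho(\C)$. The canonical monoidal functor $\Ho(\Sp) \to \mathsf{D}(\bbZ)$ through which we are asking the action to factor is base change $- \wedge H\bbZ$ along the unit map $\mathbb{S} \to H\bbZ$; under the standard identification $\mathsf{D}(\bbZ) \simeq \Ho(H\bbZ\Mod)$, its right adjoint is the forgetful functor $G\colon \mathsf{D}(\bbZ) \to \Ho(\Sp)$. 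First I would make precise what it means for the $\Ho(\Sp)$-module structure to factor over $\mathsf{D}(\bbZ)$: there is a closed $\mathsf{D}(\bbZ)$-module structure $- \odot -\colon \Ho(\C) \times \mathsf{D}(\bbZ) \to \Ho(\C)$, with internal mapping object $\underline{\Hom}_{\mathsf{D}(\bbZ)}(X,-)\colon \Ho(\C) \to \mathsf{D}(\bbZ)$ right adjoint to $X \odot -$, together with a natural isomorphism $X \wedge^L A \cong X \odot (A \wedge H\bbZ)$. This is consistent with $X \wedge^L \mathbb{S} \cong X$, since $- \wedge H\bbZ$ sends $\mathbb{S}$ to $H\bbZ$, the unit of $\mathsf{D}(\bbZ)$.

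For a fixed $X$, this exhibits $X \wedge^L -$ as the composite $\Ho(\Sp) \xrightarrow{- \wedge H\bbZ} \mathsf{D}(\bbZ) \xrightarrow{X \odot -} \Ho(\C)$. Taking right adjoints of the composite and invoking uniqueness of adjoints, I obtain a natural isomorphism $R\Hom(X,-) \cong G \circ \underline{\Hom}_{\mathsf{D}(\bbZ)}(X,-)$. In particular $R\Hom(X,Y)$ is isomorphic in $\Ho(\Sp)$ to the underlying spectrum of the $H\bbZ$-module $\underline{\Hom}_{\mathsf{D}(\bbZ)}(X,Y)$, and so is weakly equivalent to an $H\bbZ$-module. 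The last assertion of the statement, that $R\Hom(X,Y)$ is then a generalized Eilenberg-Mac~Lane spectrum in the sense of \cref{defi:GEML}, is immediate from \cref{prop:CG}.

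The argument is essentially formal, so I do not expect a computational obstacle; the care needed is purely in the bookkeeping. The first point is pinning down the meaning of ``$\Ho(\C)$ has a $\mathsf{D}(\bbZ)$-action'': one wants the factorization to be through the canonical functor $- \wedge H\bbZ$, and one wants the $\mathsf{D}(\bbZ)$-action to be closed so that $\underline{\Hom}_{\mathsf{D}(\bbZ)}(X,-)$ is available. The latter is harmless, since $X \odot -$ is exact and coproduct-preserving and $\mathsf{D}(\bbZ)$ is compactly generated, so Brown representability produces the required right adjoint regardless. The second point is checking that $- \wedge H\bbZ$ and the forgetful functor $G$ genuinely form an adjoint pair realising $\mathsf{D}(\bbZ) \simeq \Ho(H\bbZ\Mod)$, and that passing to right adjoints in the factorization $X \wedge^L - \cong (X \odot -) \circ (- \wedge H\bbZ)$ is legitimate; both are standard and can be cited rather than proved.
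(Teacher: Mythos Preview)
Your argument is correct and is essentially the same as the paper's: both identify the factorization $X \wedge^L - \cong (X \odot -)\circ(- \wedge H\bbZ)$ through $\mathsf{D}(\bbZ)\simeq\Ho(H\bbZ\Mod)$ and then pass to right adjoints to exhibit $R\Hom(X,-)$ as the forgetful functor applied to the $\mathsf{D}(\bbZ)$-valued hom. The paper presents this via two commutative diagrams rather than your prose, and does not spell out the Brown representability justification for closedness that you include, but the substance is identical.
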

\begin{proof}
Firstly, we know by~\cite[Proposition 2.10]{shipley_hz} that $\Ch$ is monoidally Quillen equivalent to $H\mathbb{Z}\Mod$. Thus, having $\mathsf{D}(\bbZ)$-action is equivalent to the $\Ho(\Sp)$-action on $\C$ factoring over $\Ho(H\mathbb{Z}\Mod)$ as follows. Consider the diagram
\[
\xymatrix{ \Ho(\C) \times \Ho(\Sp) \ar[rr]^-{-\wedge^L -} \ar[d]_{\Ho(\C)\times (-\wedge H\mathbb{Z})} & & \Ho(\C) \\
\Ho(\C) \times  \Ho(H\mathbb{Z}\Mod) \ar[urr]_{-\wedge^L -} & & 
}
\]

By taking right adjoints, this is equivalent to having the following commutative diagram
\[
\xymatrix{ \Ho(\C)^{\mathrm{op}} \times \Ho(\C) \ar[d]_{R\Hom(-,-)} \ar[rr]^-{R\Hom(-,-)} & & \Ho(\Sp) \\
\Ho(H\mathbb{Z}\Mod) \ar[urr] \\
}
\]
where the arrow $\Ho(H\mathbb{Z}\Mod) \to \Ho(\Sp)$ is the forgetful functor. Thus, we can see that if the $\Ho(\Sp)$-action factors over a $\Ho(H\mathbb{Z}\Mod)$-action, the homotopy mapping spectra $R\Hom(X,Y)$ are $H\mathbb{Z}$-modules for all $X, Y$ in $\C$. The final claim holds by applying \cref{prop:CG}.
\end{proof}

\begin{rem}
Conversely, by unravelling the definition, to obtain a $\mathsf{D}(\bbZ)$-action one needs the following.
\begin{itemize}
\item The homotopy mapping spectra $R\Hom(X,Y)$ are $H\bbZ$-modules for all $X$ and $Y$.
\item The composition map $R\Hom(Y,Z) \wedge^L_{H\bbZ} R\Hom(X,Y) \to R\Hom(X,Z)$ is a map of $H\bbZ$-modules.
\item The unit map $\mathbb{S} \to R\Hom(X,X)$ factors over $H\bbZ$.
\item For $f\colon A \to B$ in $\C$, the induced maps of spectra $R\Hom(X,f)$ and $R\Hom(f,Y)$ are $H\bbZ$-module maps.
\end{itemize}
\end{rem}

We now show that algebraic model categories have a $\mathsf{D}(\bbZ)$-action.
\begin{prop}\label{cor:dz}
Let $\C$ be an algebraic model category. Then the $\Ho(\Sp)$-module structure factors over $\mathsf{D}(\bbZ)$.
\end{prop}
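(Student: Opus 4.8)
The plan is to reduce to the case of modules over a DGA and then transport the $\mathsf{D}(\bbZ)$-action along the resulting equivalence. Since $\C$ is algebraic, by \cref{thm:algebraic} (or, in the monogenic case, directly; in general one uses the set-of-generators analogue) $\C$ is Quillen equivalent to a combinatorial $\Ch$-enriched model category $\C^{\mathrm{alg}}$. By \cref{rem:triangulatedequivalence} this Quillen equivalence induces a triangulated equivalence $\Ho(\C) \simeq \Ho(\C^{\mathrm{alg}})$, and moreover, by the last bullet point following \cref{thm:closedaction}, a Quillen equivalence induces an equivalence of $\Ho(\Sp)$-modules. So it suffices to produce a $\mathsf{D}(\bbZ)$-action on $\Ho(\C^{\mathrm{alg}})$ compatible with its $\Ho(\Sp)$-action — that is, it suffices to treat the case where $\C$ itself is a combinatorial $\Ch$-enriched model category.

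First I would pin down the two module structures in play. On the one hand, $\Ho(\C)$ carries the canonical $\Ho(\Sp)$-action from \cref{thm:closedaction}. On the other hand, because $\C$ is $\Ch$-enriched, tensored and cotensored, $\Ho(\C)$ carries a module action of $\Ho(\Ch) = \mathsf{D}(\bbZ)$ via the derived tensor $- \otimes^L -\colon \Ho(\C) \times \mathsf{D}(\bbZ) \to \Ho(\C)$ with right adjoint the derived $\Ch$-enrichment. The claim is that the $\Ho(\Sp)$-action factors through this via the symmetric monoidal functor $\Ho(\Sp) \to \mathsf{D}(\bbZ)$ induced by smashing with $H\bbZ$ (equivalently, by the monoidal Quillen equivalence $H\bbZ\Mod \simeq_Q \Ch$ of \cite[Proposition 2.10]{shipley_hz} recalled in \cref{sec:shipley}, combined with $- \wedge H\bbZ \colon \Sp \to H\bbZ\Mod$). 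Concretely, one must exhibit a natural isomorphism $X \wedge^L A \cong X \otimes^L (A \wedge H\bbZ)$ for $X \in \Ho(\C)$ and $A \in \Ho(\Sp)$, compatible with the associativity and unit constraints of the two module structures.

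The cleanest way to get this is at the model-category level, using the uniqueness of the $\Ho(\Sp)$-action. Every stable model category receives its $\Ho(\Sp)$-action canonically from \cref{thm:closedaction}, and this action is characterized (see \cite{lenhardt2012stable}) by being exact in each variable and sending $\mathbb{S}$ to the identity. Now a $\Ch$-enriched model category $\C$ is in particular $\sAb$-enriched (via the Dold–Kan equivalence) hence $sSet$-enriched via the forgetful functor $sSet \to sSet$, i.e. it is a simplicial model category, and more to the point it is $H\bbZ\Mod$-enriched after transporting along $H\bbZ\Mod \simeq_Q \Ch$; spelling this out realizes $\Ho(\C)$ as a closed module over $\Ho(H\bbZ\Mod) = \mathsf{D}(\bbZ)$, and restricting this module structure along the lax monoidal functor $\Ho(\Sp) \to \mathsf{D}(\bbZ)$, $A \mapsto A \wedge H\bbZ$, produces an $\Ho(\Sp)$-action on $\Ho(\C)$ which is again exact in each variable and fixes $\mathbb{S}$. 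By the uniqueness of such actions it agrees with the canonical one of \cref{thm:closedaction}. Hence the canonical action factors over $\mathsf{D}(\bbZ)$, which is exactly what $\mathsf{D}(\bbZ)$-action means; unwinding via the adjunction (as in the proof of \cref{prop:dz}) gives all the compatibilities of the remark following \cref{prop:dz}.

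The main obstacle is the bookkeeping in the middle step: making precise the chain of monoidal comparisons $\Ch \simeq_Q H\bbZ\Mod$, then $H\bbZ\Mod$-enrichment of $\C^{\mathrm{alg}}$, and then the factorization $\Ho(\Sp) \to \Ho(H\bbZ\Mod)$ of the sphere-action, at the level of \emph{module structures} rather than just underlying functors — one needs the comparison functors to be (weak) monoidal Quillen functors so that they transport enrichments, and one needs to know that the resulting $\Ho(\Sp)$-module structure on $\Ho(\C^{\mathrm{alg}})$ is the canonical one. The uniqueness statement for $\Ho(\Sp)$-actions (exactness in each variable plus $X \wedge \mathbb{S} \cong X$) is what makes this last identification go through without a lengthy hands-on verification; if that uniqueness is not available in the precise form needed, one falls back on checking directly that the two actions agree on the generator(s) and propagating along triangles and coproducts, exactly as in the localizing-subcategory arguments used earlier in the paper (e.g. in \cref{rest-mod}).
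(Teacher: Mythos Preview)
Your approach is essentially the same as the paper's: reduce via Quillen equivalence to the case where $\C$ is itself a $\Ch$-model category (using that Quillen equivalences induce equivalent $\Ho(\Sp)$-module structures), and then read off the $\mathsf{D}(\bbZ)$-module structure from the derived $\Ch$-enrichment. The paper's proof is two sentences and simply asserts that once $\Ho(\C)$ is a closed $\mathsf{D}(\bbZ)$-module the factorization follows; you go further and try to justify the compatibility between this $\mathsf{D}(\bbZ)$-action and the canonical Lenhardt $\Ho(\Sp)$-action by invoking a uniqueness property of the latter. That extra care is reasonable and addresses a point the paper leaves implicit, though your precise formulation of the uniqueness (``exact in each variable and sends $\mathbb{S}$ to the identity'') is a slight overstatement of what \cite{lenhardt2012stable} proves. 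One minor confusion: your opening appeal to \cref{thm:algebraic} is unnecessary, since by \cref{defn:algebraic} an algebraic model category is \emph{by definition} Quillen equivalent to a combinatorial $\Ch$-enriched one---you do not need Morita theory or a monogenicity assumption to obtain $\C^{\mathrm{alg}}$.
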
 
\begin{proof}
Without loss of generality, we may assume that $\C$ is a $\Ch$-model category rather than just Quillen equivalent to one, as Quillen equivalences induce isomorphic $\Ho(\Sp)$-module structures on the respective homotopy categories. As $\C$ is a $\Ch$-model category by assumption, $\Ho(\C)$ is a closed $\mathsf{D}(\bbZ)$-module, so the claim follows.
\end{proof}

\begin{cor}
A Quillen functor $F\colon\C \longrightarrow \D$ between algebraic model categories induces a functor $\Ho(\C) \longrightarrow \Ho(\D)$ of closed $\mathsf{D}(\bbZ)$-module categories, i.e., the diagram 
\[
\xymatrix{ \Ho(\C) \times \Ho(\Sp) \ar[rr]^{LF \times \Ho(\Sp)} \ar[d] \ar@/_4pc/[dd]_{\wedge^L} & & \Ho(\D) \times \Ho(\Sp) \ar[d] \ar@/^4pc/[dd]^{\wedge^L} \\
\Ho(\C) \times \mathsf{D}(\bbZ) \ar[rr]^{LF \times \mathsf{D}(\bbZ)} \ar[d]^{\wedge^L} & & \Ho(\D) \times \mathsf{D}(\bbZ) \ar[d]_{\wedge^L} \\
\Ho(\C) \ar[rr]_{LF} & & \Ho(\D) 
}
\]
commutes. \qed
\end{cor}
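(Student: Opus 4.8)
The plan is to check that the functor $LF\colon\Ho(\C)\to\Ho(\D)$ induced by a left Quillen functor $F$ is compatible with the $\mathsf{D}(\bbZ)$-actions produced by \cref{cor:dz}, by verifying that the displayed diagram commutes. First I would pass to a convenient model: replacing $\C$ and $\D$ by Quillen-equivalent $\Ch$-enriched model categories alters neither the homotopy categories nor their $\Ho(\Sp)$- or $\mathsf{D}(\bbZ)$-module structures, so we may assume the $\mathsf{D}(\bbZ)$-action on each side is the derived tensoring with chain complexes coming from the enrichment, exactly as in the proof of \cref{cor:dz}. After this replacement $F$ becomes a zig-zag of Quillen functors, but the only features of the induced homotopy functor we shall use are that it is exact, preserves arbitrary coproducts (being the left derived functor of a left Quillen functor), and is a $\Ho(\Sp)$-module functor by the functoriality part of \cref{thm:closedaction}.

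Then I would assemble the diagram out of three commuting pieces. The top square commutes on the nose, since both composites send $(X,A)\in\Ho(\C)\times\Ho(\Sp)$ to $(LF X,\,A\wedge H\bbZ)$. The two curved triangles commute by \cref{cor:dz}: each one is precisely the statement that the relevant $\Ho(\Sp)$-action factors through $\mathsf{D}(\bbZ)$ along $-\wedge H\bbZ$. And the outer square traced out by the two curved arrows commutes because $LF$ is a $\Ho(\Sp)$-module functor. Combining these three facts shows that the bottom square --- the assertion that $LF(X\wedge^L M)\cong LF(X)\wedge^L M$, naturally in $X$ and $M$ --- commutes after restricting the $\mathsf{D}(\bbZ)$-variable along $-\wedge H\bbZ$, i.e.\ for every $M$ lying in the essential image of $-\wedge H\bbZ\colon\Ho(\Sp)\to\mathsf{D}(\bbZ)$.

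The remaining step, which I expect to be the main obstacle, is to extend this isomorphism to all of $\mathsf{D}(\bbZ)$. The essential image of $-\wedge H\bbZ$ contains the unit $H\bbZ$ and so generates $\mathsf{D}(\bbZ)$ as a localizing subcategory, and both functors $M\mapsto LF(X\wedge^L M)$ and $M\mapsto LF(X)\wedge^L M$ are exact and coproduct-preserving; but one cannot conclude by a bare localizing-subcategory argument because that essential image is not closed under cofibres. The route I would take is to resolve an arbitrary $H\bbZ$-module $M$ by free ones, writing $M\simeq|\mathrm{Bar}_\bullet|$ as the geometric realization of a simplicial $H\bbZ$-module whose levels all lie in the essential image of $-\wedge H\bbZ$; since $LF$ preserves geometric realizations (exact plus coproduct-preserving) and $-\wedge^L-$ preserves colimits in each variable, the levelwise isomorphism from the previous paragraph realizes to the isomorphism we want. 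The one genuinely delicate point is the homotopy-coherence of these comparison maps inside a triangulated category; this can be finessed either by carrying the argument out at the model- or $\infty$-categorical level, or by invoking the essential uniqueness of a $\mathsf{D}(\bbZ)$-module structure refining a given $\Ho(\Sp)$-module structure, which makes any $\Ho(\Sp)$-module functor between two such automatically $\mathsf{D}(\bbZ)$-linear. Unwinding, the commutativity of all the pieces then yields the commutativity of the whole diagram.
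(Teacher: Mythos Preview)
The paper records no proof for this corollary: it is stated without argument and evidently meant as an immediate consequence of \cref{cor:dz} (both homotopy categories carry a $\mathsf{D}(\bbZ)$-action) together with the functoriality clause of \cref{thm:closedaction} (derived Quillen functors are $\Ho(\Sp)$-module functors). Your argument is therefore much more thorough than anything the paper provides, and you have put your finger on the one genuinely non-formal point: the top square, the two curved triangles, and the outer rectangle all commute for the reasons you give, but these only force the bottom square to commute after restriction along $-\wedge H\bbZ$, and that functor is not essentially surjective.

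Your proposed extension step is sound in outline. One small correction: the obstruction to a direct localizing-subcategory argument is not really that the essential image of $-\wedge H\bbZ$ fails to be closed under cofibres, but rather that you have not yet produced a natural comparison map $LF(X\wedge^L M)\to LF(X)\wedge^L M$ defined for \emph{all} $M\in\mathsf{D}(\bbZ)$; once such a transformation exists, the objects on which it is an isomorphism form a localizing subcategory containing $H\bbZ$ and the argument finishes immediately. Your bar-resolution suggestion is one way to build that map, and you are right that the coherence must be handled at the model- or $\infty$-categorical level rather than in the bare triangulated category. The alternative you float at the end---an ``essential uniqueness'' of $\mathsf{D}(\bbZ)$-refinements making any $\Ho(\Sp)$-linear functor automatically $\mathsf{D}(\bbZ)$-linear---is not a standard fact and would itself require proof, so I would not lean on it. In short: your proof is correct modulo the coherence point you already flag, and substantially more careful than the paper, which simply asserts the result.
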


\subsection{Trivial actions}
The action of the stable homotopy category induces an action of the stable homotopy groups $\pi_*(\mathbb{S})$ on the morphism groups in $\Ho(\C)$
\[
 \pi_*(\mathbb{S}) \otimes [X,Y]^{\C}_*  \longrightarrow [X,Y]^{\C}_*
\]
given by $\alpha \otimes \varphi = \varphi \wedge^L \alpha$. Note that the stable homotopy groups act from the left, see \cite[Construction 2.4]{schwedeshipleyuniqueness}, and that the action is associative and unital. We can describe this action explicitly using mapping spectra. 

\begin{lemma}\label{lem:endoaction}
Let $\varphi \in [X,Y]_k^\C$, and $\alpha \in \pi_i(\mathbb{S})$, $i\geq 0$. Then $\alpha \otimes \varphi \in [X,Y]^\C_{i+k}$ is adjoint to the element
\[
\alpha \circ f \in \pi_{i+k}(R\Hom(X,Y)),
\]
where $f \in \pi_k(R\Hom(X,Y))$ is adjoint to $\varphi$.
\end{lemma}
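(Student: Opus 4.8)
The plan is to unwind the definition of the $\Ho(\Sp)$-action in terms of the closed module structure from \cref{thm:closedaction} and track how the element $\alpha$ acts through the mapping spectrum adjunction. Recall that the action $\alpha \otimes \varphi$ is defined as $\varphi \wedge^L \alpha$, where we regard $\alpha \in \pi_i(\mathbb{S}) = [\mathbb{S}^i, \mathbb{S}]^{\Ho(\Sp)}$ as a morphism $\mathbb{S}^i \to \mathbb{S}$ in $\Ho(\Sp)$ (after suitable shift), and $\varphi \in [X,Y]^\C_k = [\Sigma^k X, Y]^\C$. Smashing with $\alpha$ gives a map $\Sigma^k X \wedge^L \mathbb{S}^i \to \Sigma^k X \wedge^L \mathbb{S} \cong \Sigma^k X$, and precomposing $\varphi$ with this yields $\alpha \otimes \varphi \in [\Sigma^{i+k}X, Y]^\C = [X,Y]^\C_{i+k}$, using the identification $X \wedge^L \mathbb{S}^i \cong \Sigma^i X$.

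Next I would pass to the adjoint side. The closed module structure provides a natural adjunction isomorphism $[X \wedge^L A, Y]^\C \cong [A, R\Hom(X,Y)]^{\Ho(\Sp)}$ for $A \in \Ho(\Sp)$. Under this adjunction, $\varphi \in [\Sigma^k X, Y]^\C = [X \wedge^L \mathbb{S}^k, Y]^\C$ corresponds to a map $f\colon \mathbb{S}^k \to R\Hom(X,Y)$, i.e. the element $f \in \pi_k(R\Hom(X,Y))$ of the statement. The key point is naturality of this adjunction in the variable $A$: the morphism $\alpha\colon \mathbb{S}^{i+k} \to \mathbb{S}^k$ in $\Ho(\Sp)$ (the appropriate suspension of the original $\alpha$) induces, on the left, precomposition $X \wedge^L \mathbb{S}^{i+k} \to X \wedge^L \mathbb{S}^k \xrightarrow{\varphi} Y$, which is exactly $\alpha \otimes \varphi$; and on the right it induces precomposition $\mathbb{S}^{i+k} \xrightarrow{\alpha} \mathbb{S}^k \xrightarrow{f} R\Hom(X,Y)$, which is $\alpha \circ f \in \pi_{i+k}(R\Hom(X,Y))$. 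Hence the adjunction carries $\alpha \otimes \varphi$ to $\alpha \circ f$, which is precisely the claim.

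The main obstacle, and the step requiring genuine care, is bookkeeping the suspension and shift conventions so that "$\alpha$" plays a consistent role as a morphism in $\Ho(\Sp)$ throughout — in particular ensuring that the smashing-with-$\alpha$ description of the action on $[X,Y]^\C_*$ (coming from \cite[Construction 2.4]{schwedeshipleyuniqueness}) really is intertwined by the module adjunction with honest composition $\alpha \circ f$ in the graded ring $\pi_*(R\Hom(X,Y))$, rather than up to a sign. This is where I would be explicit: write $\alpha$ as a self-map $\mathbb{S}^i \to \mathbb{S}$, use exactness of $- \wedge^L -$ and the canonical isomorphism $X \wedge^L \mathbb{S}^i \cong \Sigma^i X$, and invoke naturality of the closed-module adjunction of \cref{thm:closedaction} as a bifunctor. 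Everything else is formal once the identification $R\Hom(X,Y)$ represents $[X \wedge^L -, Y]^\C$ is in place, which is exactly the content of the closed module category structure.
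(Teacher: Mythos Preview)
Your proposal is correct. Both you and the paper reduce the claim to the compatibility between the $\Ho(\Sp)$-action and the closed module adjunction, but you organise the argument differently: you invoke naturality of the adjunction $[X \wedge^L A, Y]^\C \cong [A, R\Hom(X,Y)]$ in the variable $A$ directly, whereas the paper unwraps the adjunction explicitly via the counit formula $\varphi = \epsilon \circ (X \wedge f)$ and then uses the centrality identity $(\varphi_2 \circ \varphi_1) \wedge \alpha = \varphi_2 \circ (\varphi_1 \wedge \alpha)$ together with associativity of the action to rewrite $\varphi \wedge \alpha$ as $\epsilon \circ (X \wedge (\alpha \circ f))$. Your route is slicker and closer to a one-line categorical argument; the paper's route makes visible exactly the point you flag as ``requiring genuine care'', namely that passing from $\varphi \wedge^L \alpha$ to precomposition with $X \wedge^L \alpha$ uses centrality, and that the left-versus-right convention for the $\pi_*(\mathbb{S})$-action is what forces $\alpha \circ f$ rather than $f \circ \alpha$ on the mapping-spectrum side.
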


\begin{proof}

Write $\wedge$ instead of $\wedge^L$.
By adjunction, we have that 
\[
\pi_k(R\Hom(X,Y))\cong[\Sigma^k X, Y]^\C,
\]
so $\varphi \in [\Sigma^k X, Y]^\C$ then corresponds to an element $f \in \pi_k(R\Hom(X,Y))$. 
Thus,
\[
\varphi \wedge \alpha \in [\Sigma^{i+k} X, Y]^\C.
\]
By adjunction, this element $\varphi \wedge \alpha$ corresponds to an element $$\alpha \otimes f \in \pi_{i+k}(R\Hom(X,Y)).$$ Let us now specify what this element $\alpha \otimes f$ is.
The adjunction isomorphism gives us
\[
\varphi = \epsilon \circ (X \wedge f)\colon \Sigma^k X \xrightarrow{X \wedge f} X \wedge R\Hom(X,Y) \xrightarrow{\epsilon} Y,
\]
where $\epsilon$ is the counit. The element $\alpha$ now acts on $\varphi$ as
\begin{eqnarray}
\varphi \wedge \alpha & = & (\epsilon \circ (X \wedge f)) \wedge \alpha \nonumber\\
 & = & \epsilon \circ ((X \wedge f) \wedge \alpha) \nonumber \\
 & = & \epsilon \circ ( X \wedge (\alpha \circ f)). \nonumber 
\end{eqnarray}
The second equality comes from the fact that the action is central in the sense that
\[
(\varphi_2 \circ \varphi_1) \wedge \alpha = (\varphi_2 \wedge \alpha) \circ \varphi_1 = \varphi_2 \circ (\varphi_1 \wedge \alpha).
\]
Furthermore, the last equality uses the associativity of the action $\wedge$, however, we have to let $\alpha$ act on $f$ from the left rather than the right in the last step. 

By the same construction we see that $\epsilon \circ ( X \wedge (\alpha \circ f))$ is also adjoint to $\alpha \circ f$. Therefore, we can conclude that $$\alpha \circ f = \alpha \otimes f$$ as claimed.
\end{proof}

\begin{cor}\label{cor:emlaction}
Let $\alpha \in \pi_i(\mathbb{S})$ for $i > 0$. If the mapping spectrum $R\Hom(X,Y)$ is a generalised Eilenberg-Mac Lane spectrum in the sense of \cref{defi:GEML}, then $\alpha \otimes f=0$ for $f \in [X,Y]^\C_*.$
\end{cor}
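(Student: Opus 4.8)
The plan is to reduce, via Lemma~\ref{lem:endoaction}, to a purely homotopy-theoretic statement about the $\pi_*(\mathbb{S})$-action on the graded abelian group $\pi_*(E)$, where $E = R\Hom(X,Y)$, and then to exploit that $E$ is an $H\bbZ$-module. First I would apply Lemma~\ref{lem:endoaction}: if $f \in [X,Y]^\C_k$ has adjoint $g \in \pi_k(E)$, then $\alpha \otimes f$ is adjoint to the element $\alpha \circ g \in \pi_{i+k}(E)$, which is the image of $g$ under multiplication by $\alpha$ for the canonical $\pi_*(\mathbb{S})$-module structure on $\pi_*(E)$. Since the adjunction $[X,Y]^\C_{i+k} \cong \pi_{i+k}(E)$ is a bijection, it then suffices to prove $\alpha \circ g = 0$.

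Second, since $E$ is a generalized Eilenberg-Mac Lane spectrum, Proposition~\ref{prop:CG} provides a weak equivalence between $E$ and an $H\bbZ$-module; I would fix such a structure, with action map $\mu\colon H\bbZ \wedge E \to E$. The key observation is that the $\pi_*(\mathbb{S})$-action on $\pi_*(E)$ factors through $\pi_*(H\bbZ)$: the spectrum-level action $\mathbb{S} \wedge E \to E$ equals the composite of $\theta \wedge E\colon \mathbb{S} \wedge E \to H\bbZ \wedge E$ with $\mu$, where $\theta\colon \mathbb{S} \to H\bbZ$ is the unit. Applying $\pi_*$ and using naturality of the smash pairing, the multiplication-by-$\alpha$ map $\pi_k(E) \to \pi_{i+k}(E)$ factors through multiplication by $\theta_*(\alpha) \in \pi_i(H\bbZ)$. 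As $i > 0$ we have $\pi_i(H\bbZ) = 0$, hence $\theta_*(\alpha) = 0$ and so $\alpha \circ g = 0$; combined with the first step this gives $\alpha \otimes f = 0$.

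I expect the only fiddly point to be making the first step rigorous, namely matching the element $\alpha \circ g$ produced by Lemma~\ref{lem:endoaction} with the standard $\pi_*(\mathbb{S})$-module action on $\pi_*(E)$; this is just unwinding the definition of $- \wedge^L -$ together with the counit description appearing in the proof of that lemma, and requires no new input. Everything else is formal: the factorization through $H\bbZ$ is functoriality of $\pi_*$ applied to the defining diagram of an $H\bbZ$-module, and $\pi_{>0}(H\bbZ) = 0$ is immediate.
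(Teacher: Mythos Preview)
Your argument is correct, but it differs from the paper's. The paper proceeds directly from the wedge decomposition in \cref{defi:GEML}: for a single Eilenberg--Mac~Lane spectrum $\Sigma^j HA$ the composite $\alpha \circ g$ lands in $\pi_{i+k}(\Sigma^j HA)$ while $g$ lives in $\pi_k(\Sigma^j HA)$, and since these groups are nonzero only for $k=j$ and $i+k=j$ respectively, the composite vanishes for $i>0$; the general case follows by decomposing $g$ along the wedge summands. You instead pass through \cref{prop:CG} to equip $E$ with an $H\bbZ$-module structure and then factor the $\pi_*(\mathbb{S})$-action through $\pi_*(H\bbZ)$, which is zero in positive degrees. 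The paper's route is marginally more elementary (it avoids invoking \cref{prop:CG}), while yours is more structural and dovetails nicely with the argument of \cref{ringunit}; both are equally short and valid. Your closing worry about identifying $\alpha\circ g$ with the standard $\pi_*(\mathbb{S})$-action is unnecessary: that identification is exactly what \cref{lem:endoaction} asserts.
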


\begin{proof}
If $E$ is an Eilenberg-Mac Lane spectrum (i.e., a spectrum with homotopy groups concentrated in one degree), then $\alpha \circ f=0$ for any $\alpha \in \pi_i(\mathbb{S})$, $i>0$ and $f \in \pi_*(E)$ because of degree reasons. Therefore, the same is true if $E$ is a wedge of suspensions of Eilenberg-Mac Lane spectra. 
\end{proof}

Given a non-zero $f\in [X,Y]^{\C}$ and $\alpha \in \pi_i(\mathbb{S})$ one sees that $\alpha \otimes f = (\alpha \otimes \mathrm{id}_X) \circ (\mathrm{id}_\mathbb{S} \otimes f)$. Therefore $\alpha \otimes f = 0$ if and only if $X \wedge^L \alpha = \alpha \otimes \mathrm{id}_X = 0$. As such, the previous corollary suggests the following definition.
\begin{defi}\label{def:trivialaction}
We say that $\Ho(\Sp)$ \emph{acts trivially} on $\Ho(\C)$ if for all $X \in \Ho(\C)$ and $\alpha \in \pi_i(\mathbb{S}), \,\,i>0$ we have $X \wedge^L \alpha =0.$
\end{defi}

We now record how trivial action relates to a $\mathsf{D}(\bbZ)$-action and to being algebraic.
\begin{cor}\label{cor:dz_trivial}
If the $\Ho(\Sp)$-action on $\Ho(\C)$ factors over a $\mathsf{D}(\bbZ)$-action, then $\Ho(\Sp)$ acts trivially on $\Ho(\C)$.
In particular, if $\C$ is an algebraic model category, then $\Ho(\Sp)$ acts trivially on $\Ho(\C)$.
\end{cor}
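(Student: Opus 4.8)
The plan is to chain together the two facts already in place: that a $\mathsf{D}(\bbZ)$-action forces the homotopy mapping spectra to be generalized Eilenberg-Mac~Lane spectra (\cref{prop:dz}), and that the $\pi_*(\mathbb{S})$-action on such spectra vanishes in positive degrees (\cref{cor:emlaction}).

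First I would assume that the $\Ho(\Sp)$-action on $\Ho(\C)$ factors over $\mathsf{D}(\bbZ)$ and apply \cref{prop:dz} to conclude that $R\Hom(X,Y)$ is an $H\bbZ$-module, hence a generalized Eilenberg-Mac~Lane spectrum in the sense of \cref{defi:GEML}, for all $X,Y \in \C$; in particular this holds with $Y = X$. Next, given $\alpha \in \pi_i(\mathbb{S})$ with $i > 0$, I would feed the identity morphism $\mathrm{id}_X \in [X,X]^\C_0$ into \cref{cor:emlaction} to obtain $\alpha \otimes \mathrm{id}_X = 0$. Since $X \wedge^L \alpha = \alpha \otimes \mathrm{id}_X$ by the definition of the action on the identity (as recorded in the discussion preceding \cref{def:trivialaction}), this gives $X \wedge^L \alpha = 0$ for every $X \in \Ho(\C)$ and every $\alpha \in \pi_{>0}(\mathbb{S})$, which is exactly the statement that $\Ho(\Sp)$ acts trivially on $\Ho(\C)$ by \cref{def:trivialaction}.

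For the ``in particular'' clause, if $\C$ is an algebraic model category then \cref{cor:dz} shows that the $\Ho(\Sp)$-action factors over $\mathsf{D}(\bbZ)$, so the first part applies verbatim. There is no genuine obstacle here: the content is entirely carried by the earlier results, and the only point requiring care is to invoke \cref{cor:emlaction} on the morphism $\mathrm{id}_X$ specifically, so that the conclusion is triviality in the sense of \cref{def:trivialaction} rather than merely $\alpha \otimes f = 0$ for individual $f$.
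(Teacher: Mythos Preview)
Your proposal is correct and follows essentially the same approach as the paper: invoke \cref{prop:dz} to get generalized Eilenberg--Mac~Lane mapping spectra, apply \cref{cor:emlaction}, and then use \cref{cor:dz} for the algebraic case. The only difference is cosmetic---you are more explicit about applying \cref{cor:emlaction} to $\mathrm{id}_X$ and identifying $\alpha \otimes \mathrm{id}_X$ with $X \wedge^L \alpha$, whereas the paper simply says the action is trivial by \cref{cor:emlaction}.
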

\begin{proof}
By \cref{lem:endoaction}, the action of $\pi_*(\mathbb{S})$ on mapping objects is given by precomposition with the adjoint maps. If the $\Ho(\Sp)$-action factors over $\mathsf{D}(\bbZ)$, this means that the mapping spectra are products of Eilenberg-Mac Lane spectra by \cref{prop:dz}, and thus by \cref{cor:emlaction} the action is trivial. If $\C$ is algebraic, then it has $\mathsf{D}(\bbZ)$-action by \cref{cor:dz} so the second claim follows.
\end{proof}

We can reduce the definition of trivial action to testing on a compact generator.

\begin{prop}\label{prop:generators}
Let $\C$ be a monogenic, stable, presentable model category with a bifibrant compact generator $G$. Then $\Ho(\Sp)$ acts trivially on $\Ho(\C)$ if and only if $G \wedge^L \alpha = 0$ for all $\alpha \in \pi_k(\mathbb{S}),\,\, k>0$.
\end{prop}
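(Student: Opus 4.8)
The plan is to prove the non-trivial direction: assuming $G \wedge^L \alpha = 0$ for the compact generator $G$ and all $\alpha \in \pi_k(\mathbb{S})$ with $k>0$, deduce that $X \wedge^L \alpha = 0$ for \emph{every} $X \in \Ho(\C)$. (The converse is immediate, since $G$ is in particular an object of $\Ho(\C)$.) The key observation is that the functor $- \wedge^L \alpha \colon \Ho(\C) \to \Ho(\C)$ is exact and preserves arbitrary coproducts, by the properties of the closed $\Ho(\Sp)$-module structure listed after \cref{thm:closedaction} (exactness in each variable, and coproduct-preservation since $- \wedge^L -$ is a left adjoint in the $\C$-variable). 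So I would consider the full subcategory
\[
\mathcal{L}_\alpha = \{ X \in \Ho(\C) : X \wedge^L \alpha = 0 \}.
\]
The first step is to check $\mathcal{L}_\alpha$ is localizing: it is closed under triangles because $- \wedge^L \alpha$ is exact (an object in a triangle with two zero vertices has a zero image, and the long exact sequence / two-out-of-three forces the third term's image to be zero), it is closed under retracts, and it is closed under arbitrary coproducts because $\coprod_i X_i \wedge^L \alpha \cong \coprod_i (X_i \wedge^L \alpha)$. The second step is simply to note $G \in \mathcal{L}_\alpha$ by hypothesis. Since $G$ is a generator, the smallest localizing subcategory containing $G$ is all of $\Ho(\C)$, hence $\mathcal{L}_\alpha = \Ho(\C)$. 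Running this for each $\alpha \in \pi_k(\mathbb{S})$, $k > 0$, gives exactly the definition of trivial $\Ho(\Sp)$-action (\cref{def:trivialaction}).

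This is essentially the same localizing-subcategory argument used in the proof of \cref{rest-mod}, transported to the setting of the $\Ho(\Sp)$-action; the only things that need care are that $-\wedge^L\alpha$ really is exact and coproduct-preserving (which is guaranteed by \cref{thm:closedaction} and the bullet points following it) and that suspensions of $G$ also lie in $\mathcal{L}_\alpha$ — but this is automatic since $\Sigma G \wedge^L \alpha \cong \Sigma(G \wedge^L \alpha) = 0$, as $-\wedge^L\alpha$ is exact, so no separate hypothesis is needed. I do not anticipate a serious obstacle: the one point to state cleanly is that "localizing subcategory generated by $G$ equals $\Ho(\C)$" is exactly the meaning of $G$ being a generator (as recalled in the Conventions), so no compactness of $G$ is actually used in this direction — though it is harmless to keep it in the hypotheses for uniformity with the surrounding results. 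I would write the proof in three or four sentences along exactly these lines.
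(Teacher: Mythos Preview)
Your localizing-subcategory argument has a genuine gap: the class $\mathcal{L}_\alpha = \{X : X \wedge^L \alpha = 0\}$ is \emph{not} closed under extensions in general. The issue is that $X \wedge^L \alpha$ is a \emph{morphism} $\Sigma^k X \to X$, so $\mathcal{L}_\alpha$ is the vanishing locus of a natural transformation between the exact functors $\Sigma^k$ and $\mathrm{id}$, not the object-kernel of an exact functor. For a morphism of distinguished triangles, two of the three components being zero does not force the third to be zero; your ``two-out-of-three'' justification conflates these situations. A concrete instance (with $k=0$, but note that your argument nowhere uses the hypothesis $k>0$) is the triangle $\bbZ/2 \to \bbZ/4 \to \bbZ/2$ in $\mathsf{D}(\bbZ)$ with $\alpha = 2 \in \pi_0(\mathbb{S})$: multiplication by $2$ vanishes on both copies of $\bbZ/2$ but not on $\bbZ/4$. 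So $\mathcal{L}_\alpha$ fails to be triangulated already here, and the template of \cref{rest-mod} does not transport.

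The paper's proof avoids this by passing through Morita theory. After replacing $\C$ by a spectral model category and then by right $\mathrm{hEnd}(G)$-modules via \cref{thm:morita}, the hypothesis $G \wedge^L \alpha = 0$ becomes $\mathrm{hEnd}(G) \wedge \alpha = 0$, and one must show $M \wedge \alpha = 0$ for every $\mathrm{hEnd}(G)$-module spectrum $M$. This follows from a retract trick: the module structure exhibits $M$ as a retract of $M \wedge \mathrm{hEnd}(G)$, so $M \wedge \alpha$ is a retract of $M \wedge \mathrm{hEnd}(G) \wedge \alpha = 0$. The ring structure on $\mathrm{hEnd}(G)$ is used essentially here, and this is exactly what substitutes for the unavailable triangulated-closure step.
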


\begin{proof}
Since $\C$ is stable and presentable, it is Quillen equivalent to a spectral model category by~\cite[Propositions 5.5 and 5.6]{dugger_spectral}. As Quillen equivalences induce equivalent $\Ho(\Sp)$-module structures, we may without loss of generality assume that $\C$ is spectral.

By Morita theory (c.f.,~\cref{thm:morita}) we have a Quillen equivalence
\[\begin{tikzcd}
\C \arrow[rrr, yshift=-1mm, "{\C_{\Sp^\Sigma}(G,-)}"'] & & & \rmod\mathrm{hEnd}(G) \arrow[lll, yshift=1mm, "- \wedge_{\mathrm{hEnd}(G)} G"'].
\end{tikzcd} \]

Now let $X \in \C$. Since Quillen equivalences induce equivalent $\Ho(\Sp)$-module structures, we have $X \wedge^L \alpha = 0$ if and only if $\C_{\Sp^\Sigma}(G,X) \wedge^L \alpha = 0$.

Since $G \wedge^L \alpha = 0$ by assumption, we also have \[\mathrm{hEnd}(G) \wedge^L \alpha = \C_{\Sp^\Sigma}(G,G) \wedge^L \alpha = 0.\]
But $\C_{\Sp^\Sigma}(G,X) \wedge \alpha$ is a retract of $\C_{\Sp^\Sigma}(G,X) \wedge \C_{\Sp^\Sigma}(G,G) \wedge \alpha$, as the spectrum $\C_{\Sp^\Sigma}(G,X)$ is a module over $\C_{\Sp^\Sigma}(G,G)$.
This implies that $\alpha$ also acts trivially on $\C_{\Sp^\Sigma}(G,X)$ and therefore on $X$, as required. 
\end{proof}

As well as reducing to checking trivial actions on a compact generator of $\Ho(\C)$, it is natural to ask whether or not we can reduce to checking just a small number of elements of $\pi_*(\mathbb{S})$. By~\cite{cohen}, the whole of $\pi_*(\mathbb{S})$ is ``generated'' by the elements of Adams filtration 1 using multiplication and higher Toda brackets: every $\theta \in \pi_*(\mathbb{S})$ that is not a Hopf element can be written as a higher-order Toda bracket of matrices with values in (smaller degrees of) $\pi_*(\mathbb{S})$.
Considering localization at each prime separately, for $p=2$ the elements of Adams filtration 1 are the Hopf elements $2 \in \pi_0(\mathbb{S})$, $\eta \in \pi_1(\mathbb{S})$, $\nu\in\pi_3(\mathbb{S})$ and $\sigma \in \pi_7(\mathbb{S})$. For odd primes, the only ones are $p \in \pi_0(\mathbb{S})$, $\alpha_1 \in \pi_{2p-3}(\mathbb{S})$. 

This suggests the following definition.
\begin{defi}
We say that $\Ho(\Sp)$ \emph{acts essentially trivially} on $\Ho(\C)$ if for all $X \in \Ho(\C)$ and $\alpha \in \pi_i(\mathbb{S}), \,\,i>0,$ of Adams filtration 1 we have $X \wedge^L \alpha =0.$ By the analogue of \cref{prop:generators}, if $\C$ is monogenic, this is equivalent to $G \wedge^L \alpha = 0$ for a compact generator $G$, and $\alpha \in \pi_i(\mathbb{S}),\,\,i>0,$ of Adams filtration 1.
\end{defi}

We make the following conjecture relating essentially trivial actions with trivial actions. We prove a special case of this as \cref{thm:klocalcohen}.
\begin{conj}\label{conj:actions}
Let $\C$ be a stable model category. Then $\Ho(\Sp)$ acts trivially on $\Ho(\C)$ if and only if $\Ho(\Sp)$ acts essentially trivially on $\Ho(\C)$.
\end{conj}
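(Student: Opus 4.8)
We outline a possible strategy towards \cref{conj:actions}. The forward implication is immediate from the definitions, so everything rests on showing that essential triviality implies triviality, and the plan is to bootstrap from Adams filtration $1$ to all of $\pi_{>0}(\s)$ using Cohen's theorem~\cite{cohen}. First I would fix $X \in \Ho(\C)$ and $\theta \in \pi_i(\s)$ with $i>0$, and reduce to one prime at a time: since $\pi_{>0}(\s)$ is entirely torsion and the pairing $\pi_*(\s)\otimes [X,X]^{\C}_* \to [X,X]^{\C}_*$ is bilinear, the element $X \wedge^L \theta$ is torsion, so it suffices to show it vanishes after localizing at each prime $p$ dividing its order; one may then replace $\pi_*(\s)$ by its $p$-localization throughout. (No monogenicity is needed, although in the monogenic case one could equally work with a compact generator via \cref{prop:generators}.) Note also that products of trivially-acting classes act trivially, since the action is multiplicative, so the genuinely new input at each stage will be Toda brackets.

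I would then induct on the degree $i = |\theta|$. If $\theta$ has Adams filtration $1$ --- so $\theta$ is a unit multiple of one of $\eta,\nu,\sigma$ at $p=2$, or of $\alpha_1$ at an odd prime --- then $X\wedge^L\theta = 0$ by essential triviality; this is the base case. Otherwise, by Cohen's theorem we may write $\theta$ as a higher matric Toda bracket $\theta \in \langle A_1,\dots,A_k\rangle$ whose entries are matrices over $\pi_*(\s)$ in strictly smaller (and, since the generators $\eta,\nu,\sigma,\alpha_1$ all lie in positive degrees, strictly positive) degrees. Since the functor $X \wedge^L - \colon \Ho(\Sp) \to \Ho(\C)$ of \cref{thm:closedaction} is exact, it is compatible with matric Toda brackets --- it preserves the triangles, composites and nullhomotopies entering their construction --- and hence $X\wedge^L\theta$ lies in the bracket $\langle X\wedge^L A_1,\dots,X\wedge^L A_k\rangle$ formed in $\Ho(\C)$. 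By the inductive hypothesis every entry $X\wedge^L A_j$ vanishes, so this is a matric Toda bracket all of whose entries are zero; the remaining task is to conclude that such a bracket equals $\{0\}$, which would force $X\wedge^L\theta = 0$ and close the induction.

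The hard part is precisely this last step: controlling the bracket and its indeterminacy. For an ordinary triple bracket the indeterminacy of $\langle 0,0,0\rangle$ is $0\cdot[-,-] + [-,-]\cdot 0 = 0$, so the bracket is $\{0\}$; this already proves \cref{conj:actions} whenever Cohen's expression for $\theta$ involves only triple brackets, which is essentially the regime exploited in the proof of \cref{thm:klocalcohen}, where the relevant stable stems are small. For genuinely higher brackets of length $k \geq 4$, however, the defining data are chains of nullhomotopies of the nested sub-brackets, and it is no longer automatic that the zero choices exhibit the bracket as $\{0\}$; one must check that the vanishing supplied by the inductive hypothesis really does trivialise all the relevant choices together with the full indeterminacy. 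Making this precise appears to require unwinding Cohen's inductive construction of the brackets and tracking, stage by stage, which composites and nullhomotopies are affected, and doing so uniformly over all stems and both parities of $p$. This bookkeeping, rather than any new conceptual ingredient, is the obstacle that currently prevents us from upgrading \cref{conj:actions} to a theorem.
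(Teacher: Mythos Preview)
Your proposal is not a proof but an outline of a strategy together with an honest acknowledgement of the gap, and that is appropriate: the statement is a \emph{conjecture} in the paper, and the paper does not prove it either. The strategy you sketch---induction on degree via Cohen's decomposition, pushing the bracket through the exact functor $X\wedge^L-$, and then worrying about indeterminacy---is exactly the approach the paper discusses in the remark following the conjecture. So at the level of overall shape you and the paper agree.

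There is, however, one concrete error in your write-up that matters. You claim that the entries of Cohen's matric Toda bracket lie in \emph{strictly positive} degrees, reasoning from the fact that the Hopf elements $\eta,\nu,\sigma,\alpha_1$ are in positive degree. This is not what Cohen's theorem says: the entries lie in strictly \emph{smaller} degree than $\theta$, and degree~$0$ is allowed. In particular, multiples of $p$ occur as entries, and $X\wedge^L p$ is multiplication by $p$, not zero. So your inductive hypothesis does not force every $X\wedge^L A_j$ to vanish; it only forces the positive-degree entries to vanish, leaving a bracket whose entries are zeros and powers of $p$. This is precisely the situation the paper isolates, and it gives the explicit example $\langle 2,0,0,2\rangle$ in $\Ho(\Sp)$, which contains $\eta^2$. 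That example shows the indeterminacy problem is not merely ``bookkeeping'' as you describe it, but a genuine obstruction: a length-$4$ bracket with those innocent-looking entries can already contain a nonzero element. Finally, a minor point: the proof of \cref{thm:klocalcohen} does not actually run through triple Toda brackets as you suggest; it uses $v_1$-periodicity directly and avoids Cohen's theorem altogether.
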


\begin{rem}
One might think that the above conjecture can be proved using Cohen's theorem. However, this method fails mainly because of how the indeterminacy of $n$-fold Toda brackets behave. More precisely, Cohen's theorem~ \cite[Theorem 4.2]{cohen} says that if $\theta \in \pi_*(\mathbb{S})$ is not a Hopf element, then there is a Toda bracket of the form $$\left< f_1, f_2, \cdots f_n \right>$$ containing $\theta$,  where $$G_1 \xrightarrow{f_1} G_2 \xrightarrow{f_2} \cdots \xrightarrow{f_n} G_{n+1},$$ $G_1$ and $G_{n+1}$ are spheres and the other $G_i$ are wedges of spheres. In particular, the $f_i$ can be represented as matrices with entries in $\pi_*(\mathbb{S})$ of degree strictly less than that of $\theta$. We then have that $$X \wedge^L \theta \in \left< X \wedge^L f_1, X \wedge^L f_2, \cdots X \wedge^L f_n \right>.$$ 

It would then be the goal to prove that $X \wedge^L \theta =0$ if $X \wedge^L \alpha=0$ for all Hopf elements $\alpha$ by induction on degree.
At $p=2$, assuming that $X \wedge^L \eta$, $X \wedge^L \nu$ and $X \wedge^L \sigma$ are all trivial, one can quickly prove that $X \wedge^L \theta=0$ for all $\theta$ in degrees 1 to 7. (At odd primes, the situation is even simpler.) Thus, induction tells us that $X \wedge^L \theta$ is an element of a Toda bracket whose entries are zeros and powers of $p$. As such, this Toda bracket contains zero. Sadly, this is not enough to conclude that $X \wedge^L \theta$ itself has to be zero, as a long bracket as above can still have nontrivial indeterminacy. (For example, the bracket $\left<2,0,0,2\right>$ in $\Ho(\Sp)$ contains $\eta^2$.)
\end{rem}

 \subsection{Detecting trivial actions}
In this section we give three criteria for checking whether $\Ho(\Sp)$ acts trivially on $\Ho(\C)$. 
Firstly, in some cases we can detect trivial $\Ho(\Sp)$-action already from homotopy groups.
\begin{lemma}\label{ringunit}
Let $R$ be a ring spectrum and $\iota\colon \mathbb{S} \longrightarrow R$ its unit map. If $\iota_*\colon \pi_*(\mathbb{S}) \longrightarrow \pi_*(R)$ is zero in positive degrees, then $\Ho(R\Mod)$ has trivial $\Ho(\Sp)$-action.
\end{lemma}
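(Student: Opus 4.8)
The plan is to reduce the triviality of the $\Ho(\Sp)$-action on $\Ho(R\Mod)$ to the compact generator $R$ itself, using \cref{prop:generators}, and then to identify $R \wedge^L \alpha$ with the image of $\alpha$ under the unit map on homotopy groups. First I would recall that $R\Mod$ is a monogenic, stable, presentable model category with bifibrant compact generator $R$, so by \cref{prop:generators} it suffices to show that $R \wedge^L \alpha = 0$ in $[R,R]^{R\Mod}_* \cong \pi_*(R)$ for every $\alpha \in \pi_k(\mathbb{S})$ with $k > 0$.

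Next I would invoke \cref{lem:endoaction} (together with the observation recorded just before \cref{def:trivialaction} that $R \wedge^L \alpha = \alpha \otimes \mathrm{id}_R$) to identify this element concretely. The identity $\mathrm{id}_R \in [R,R]^{R\Mod}_0$ is adjoint to the unit $\iota \in \pi_0(R\Hom_R(R,R)) \cong \pi_0(R)$, and \cref{lem:endoaction} then says that $\alpha \otimes \mathrm{id}_R$ is adjoint to $\alpha \circ \iota \in \pi_k(R)$. But $\alpha \circ \iota$ is precisely the image $\iota_*(\alpha)$ of $\alpha$ under the map on homotopy groups induced by the unit $\iota\colon \mathbb{S} \to R$: composing the class $\iota\colon S^0 \to R$ with (a representative of) $\alpha\colon S^k \to S^0$ gives $\iota_*(\alpha)$. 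Here $R\Hom_R(R,R) \simeq R$ since $R$ is its own endomorphism spectrum as a module over itself.

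By hypothesis $\iota_*\colon \pi_*(\mathbb{S}) \to \pi_*(R)$ vanishes in positive degrees, so $\iota_*(\alpha) = 0$ for all $\alpha \in \pi_k(\mathbb{S})$ with $k > 0$, whence $R \wedge^L \alpha = 0$. Applying \cref{prop:generators} then gives that $\Ho(\Sp)$ acts trivially on $\Ho(R\Mod)$, as claimed.

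The only mild subtlety — not really an obstacle — is bookkeeping the adjunction identifications: matching the module-category mapping group $[R,R]^{R\Mod}_*$ with $\pi_*(R)$, and checking that under this identification the action described via $\wedge^L$ and the action via composition with $\iota$ agree. This is exactly what \cref{lem:endoaction} is set up to deliver, so the argument is short once that lemma is in hand.
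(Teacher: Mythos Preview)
Your proof is correct and follows the same overall strategy as the paper: reduce to the compact generator $R$ via \cref{prop:generators}, then identify $R \wedge^L \alpha$ with $\iota_*(\alpha) \in \pi_k(R)$, which vanishes by hypothesis.

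The only difference is in how that identification is carried out. You route it through \cref{lem:endoaction} together with the equivalence $R\Hom_R(R,R)\simeq R$, so that $\alpha\otimes\mathrm{id}_R$ becomes $\alpha\circ\iota=\iota_*(\alpha)$. The paper instead uses the ring structure directly: writing $\mathrm{id}_R$ as the composite $R\cong R\wedge^L\mathbb{S}\xrightarrow{R\wedge^L\iota} R\wedge^L R\xrightarrow{\mu} R$, one gets $R\wedge^L\alpha=\mu\circ(R\wedge^L(\iota\circ\alpha))=\mu\circ(R\wedge^L\iota_*(\alpha))=0$. This is slightly more self-contained since it does not invoke the earlier lemma, but the two arguments are really the same computation viewed from two angles---unpacking the identification $R\Hom_R(R,R)\simeq R$ amounts to using the multiplication map $\mu$.
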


\begin{proof}
Let $\mu\colon R \wedge^L R \longrightarrow R$ be the multiplication map of $R$. By \cref{prop:generators}, we want to check that $R \wedge^L \alpha=0$ for $\alpha \in \pi_k(\mathbb{S})$, $k>0$. We have 
$$\mathrm{id} \simeq \mu \circ \iota\colon R \cong R \wedge^L \mathbb{S} \longrightarrow R \wedge^L R \longrightarrow R$$
by definition. Therefore, for $\alpha \in \pi_k(\mathbb{S})$, $k >0$,
$R \wedge^L \alpha \simeq R \wedge^L (\mu \circ \iota \circ \alpha) \simeq R \wedge^L (\mu \circ \iota_*(\alpha)),$
which is zero by assumption.
\end{proof}

\begin{cor}
Let $R$ be a ring spectrum with $\pi_k(R)$ torsion-free for $k>0$. Then $\Ho(R\Mod)$ has trivial $\Ho(\Sp)$-action. \qed
\end{cor}

\begin{cor}\label{ringcoefficients}
For $R= MU,~BP,~E(n),~KU$ and their connective covers, $\Ho(R\Mod)$ has trivial $\Ho(\Sp)$-action. \qed
\end{cor}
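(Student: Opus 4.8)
The plan is to reduce the statement immediately to the preceding corollary (equivalently, to \cref{ringunit}): it suffices to check that for each of the listed spectra $R$ the unit map $\iota_*\colon \pi_*(\mathbb{S}) \to \pi_*(R)$ vanishes in positive degrees. Since $\pi_k(\mathbb{S})$ is a finite abelian group for every $k>0$, this will follow as soon as I show that $\pi_k(R)$ is torsion-free for $k>0$, for then any homomorphism from $\pi_k(\mathbb{S})$ into it is zero.

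First I would recall the relevant coefficient rings. By the Milnor--Quillen computation $\pi_*(MU) \cong \mathbb{Z}[x_1,x_2,\dots]$ with $|x_i|=2i$; passing to the $p$-local Brown--Peterson summand gives $\pi_*(BP) \cong \mathbb{Z}_{(p)}[v_1,v_2,\dots]$; the Johnson--Wilson spectra satisfy $\pi_*(E(n)) \cong \mathbb{Z}_{(p)}[v_1,\dots,v_{n-1},v_n^{\pm 1}]$; and $\pi_*(KU) \cong \mathbb{Z}[u^{\pm 1}]$ with $|u|=2$. Each of these is free as a module over $\mathbb{Z}$ or $\mathbb{Z}_{(p)}$, hence torsion-free in every degree, so \cref{ringunit} applies directly. (Note $MU$ and $BP$ are already connective, so for them ``connective cover'' means the spectrum itself.)

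For the connective covers of the non-connective spectra $E(n)$ and $KU$, I would write $R\langle 0\rangle = \tau_{\geq 0}R$ for the connective cover, which carries a ring structure together with a ring map $R\langle 0\rangle \to R$ under $\mathbb{S}$, and has $\pi_k(R\langle 0\rangle) = \pi_k(R)$ for $k\geq 0$ and $\pi_k(R\langle 0\rangle)=0$ for $k<0$. In particular $\pi_k(R\langle 0\rangle)$ is again torsion-free for $k>0$, so \cref{ringunit} applies once more and yields the claim.

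The argument is entirely routine, and I do not expect a genuine obstacle. The only point meriting a word of care is the interpretation of ``connective cover'' for $E(n)$ and $KU$ as the Postnikov truncation $\tau_{\geq 0}$, together with the (standard) fact that this truncation inherits a ring spectrum structure compatible with the unit from $\mathbb{S}$; granting this, the verification is immediate from the explicit homotopy rings above.
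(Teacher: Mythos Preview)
Your proposal is correct and follows exactly the approach the paper intends: the corollary is stated with a bare \qed because it is an immediate application of the preceding corollary (torsion-free $\pi_k(R)$ for $k>0$ forces trivial action), and you verify precisely this by recalling the well-known coefficient rings of $MU$, $BP$, $E(n)$, $KU$ and noting that connective covers inherit the same positive-degree homotopy.
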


Our second criterion gives a positive answer to \cref{conj:actions} in the case when the homotopy mapping spectra are $E(1)$-local at an odd prime.
\begin{thm}\label{thm:klocalcohen}
Let $\C$ be a stable model category for which the homotopy mapping spectra $R\mathrm{Hom}(X,Y)$ are $E(1)$-local for all $X$ and $Y$ in $\C$, for some fixed odd prime $p$. If \linebreak $\alpha_1 \in \pi_{2p-3}(L_1\mathbb{S})$ acts trivially, then $\Ho(\Sp)$ acts trivially on $\Ho(\C)$.
\end{thm}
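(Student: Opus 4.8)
The plan is to reduce, via \cref{prop:generators}, to the statement that a single element $\alpha \in \pi_k(L_1\mathbb{S})$ with $k>0$ annihilates a compact generator $G$; but actually the cleanest route works directly with arbitrary $X,Y$ and the mapping spectra, so I would phrase everything in terms of $R\Hom(X,Y)$. By hypothesis each $R\Hom(X,Y)$ is an $E(1)$-local spectrum, so its homotopy groups are modules over $\pi_*(L_1\mathbb{S})$, and by \cref{lem:endoaction} the action of $\pi_*(\mathbb{S})$ on $[X,Y]^\C_*$ is identified with the $\pi_*(L_1\mathbb{S})$-action on $\pi_*(R\Hom(X,Y))$ (the map $\pi_*(\mathbb{S}) \to \pi_*(L_1\mathbb{S})$ being the $E(1)$-localization). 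So it suffices to show: if $\alpha_1$ acts trivially on all $\pi_*(R\Hom(X,Y))$, then every element of $\pi_{>0}(L_1\mathbb{S})$ acts trivially.

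The key input is the explicit structure of $\pi_*(L_1\mathbb{S})$ at an odd prime $p$. Here $L_1\mathbb{S} = L_{K(1)}\mathbb{S}$ up to the rational part, and the $p$-local homotopy is classically known (Adams, Ravenel): $\pi_0 = \mathbb{Z}_{(p)}$, $\pi_{-1} = \mathbb{Z}_p$ (from the $K(1)$-local sphere), and in positive degrees the only nonzero groups are the $\alpha$-family $\pi_{q i - 1}(L_1\mathbb{S}) \cong \mathbb{Z}/p^{\nu_p(i)+1}$ in degrees $qi-1$ where $q = 2p-2$, generated by $\alpha_{i/\nu_p(i)+1}$, together with the image of $\alpha_1$ and its multiples. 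Crucially, $\pi_*(L_1\mathbb{S})$ in positive degrees is generated \emph{as a graded group} by elements each of which is an iterated product/Massey-product expression built from $\alpha_1$ and $p$; more precisely every positive-degree class is, up to a unit and a power of $p$, a multiple of $\alpha_1$ or lies in a Toda/Massey bracket $\langle \alpha_1, p^{j}, \alpha_1, \dots \rangle$ whose entries all have degree $\le$ that of the class and include $\alpha_1$ as a factor "detecting" the bracket. I would then argue: since $\alpha_1$ acts as zero on every $\pi_*(R\Hom(X,Y))$, and since the action is central (\cref{lem:endoaction}) so that $\alpha_1 \cdot(-)$ factors through $R\Hom(X,Y)\wedge^L \alpha_1 = 0$, any bracket expression with an $\alpha_1$-entry also acts as zero — but here, unlike in the remark's discussion of the unlocalized sphere, the indeterminacy is controlled because in $\pi_*(L_1\mathbb{S})$ the relevant brackets have indeterminacy lying in the ideal generated by $\alpha_1$ (the $\alpha$-family is "thin"), hence the bracket \emph{equals} $0$ as an operation on $L_1$-local modules rather than merely containing $0$.

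So the skeleton is: (1) invoke \cref{prop:generators} / \cref{lem:endoaction} to translate to the $\pi_*(L_1\mathbb{S})$-module action on $\pi_*(R\Hom(X,Y))$; (2) recall the computation of $\pi_{>0}(L_1\mathbb{S})$ at an odd prime, identifying it as generated over $\pi_0 = \mathbb{Z}_{(p)}$ by the $\alpha$-family, each class being a multiple of $\alpha_1$ or an $\alpha_1$-laden bracket with degree strictly decreasing toward $\alpha_1$; (3) induct on degree, using that multiplication by $\alpha_1$ is zero and that the bracket indeterminacies at each stage lie in $\alpha_1 \cdot \pi_*(-)$ and hence vanish; conclude every positive-degree element acts trivially. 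The main obstacle is precisely step (3): pinning down that in the $E(1)$-local (equivalently $K(1)$-local plus rational) sphere at odd $p$ the Toda bracket relations expressing the higher $\alpha_i$ in terms of $\alpha_1$ have indeterminacy contained in the $\alpha_1$-ideal — this is where the odd-primality and $E(1)$-locality are both essential, and it is the analogue, made rigorous by the sparseness of $\pi_*(L_1\mathbb{S})$, of the failed argument sketched in the remark for the full sphere; I would lean on the known additive and multiplicative structure of $\pi_*(L_1\mathbb{S})$ (Ravenel, and for the $K(1)$-local sphere the computations going back to Adams) to make this precise.
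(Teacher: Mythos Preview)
Your approach is genuinely different from the paper's, and step (3) has a real gap. You claim the indeterminacy of the Toda brackets expressing the higher $\alpha$-elements is controlled ``because in $\pi_*(L_1\mathbb{S})$ the relevant brackets have indeterminacy lying in the ideal generated by $\alpha_1$''. But the indeterminacy that matters is computed in $\Ho(\C)$, not in $\pi_*(L_1\mathbb{S})$: for an $n$-fold bracket $\langle X \wedge^L f_1, \ldots, X \wedge^L f_n \rangle$ with $n \geq 4$, the indeterminacy involves sub-brackets whose outermost entry is an \emph{arbitrary} morphism in $[\Sigma^? X, X]^\C$, not one of the form $X \wedge^L h$. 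Sparseness of $\pi_*(L_1\mathbb{S})$ says nothing about these groups. This is precisely the obstacle flagged in the remark after \cref{conj:actions}, and your assertion that $E(1)$-locality removes it is unjustified. A concrete test case: the generator $\alpha_{p/2}$ of $\pi_{(2p-2)p-1}(L_1\mathbb{S}) \cong \mathbb{Z}/p^2$ is not reachable by iterated $3$-fold brackets in $\alpha_1$ and $p$ (those only produce the order-$p$ classes $\alpha_i$); it needs a longer bracket whose indeterminacy in $\Ho(\C)$ you have not controlled, and knowing that $p\cdot\alpha_{p/2}$ acts trivially does not force $\alpha_{p/2}$ to act trivially on a module with $p$-torsion.

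The paper avoids brackets entirely. After reducing to the $\pi_*(L_1\mathbb{S})$-action, it passes to the mod-$p$ Moore spectrum $M$ and uses that the $v_1$-self-map $v_1\colon \Sigma^{2p-2}M \to M$ is an $E(1)$-local equivalence. This yields compatible isomorphisms $v_1^*\colon [M,\mathbb{S}]_i^{L_1\Sp} \xrightarrow{\cong} [M,\mathbb{S}]_{i+2p-2}^{L_1\Sp}$ and $(X\wedge^L v_1)^*$ on $[X\wedge M, X]^{L_1\Sp}_*$. One checks from the short exact sequence for $[M,-]$ that $[M,\mathbb{S}]_i^{L_1\Sp}$ for $0 \le i \le 2p-3$ is either zero or generated by a class built from $\alpha_1$ (namely $\alpha_1 \circ \mathrm{pinch}$ in degree $2p-4$ and an element $A$ with $A\circ\mathrm{incl}=\alpha_1$ in degree $2p-3$), so $X \wedge^L(-)$ kills this range. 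Periodicity then propagates triviality to all positive degrees; combining with the rational part finishes the proof. The idea you are missing is this $v_1$-periodicity on $M$, which replaces an induction on bracket length by a clean degree shift of $2p-2$.
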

\begin{proof}
If the homotopy mapping spectra are all $E(1)$-local, then the action of $\Ho(\Sp)$ on $\Ho(\C)$ factors over the $E(1)$-local stable homotopy category $\Ho(L_1\Sp)$~\cite[Theorem 7.8]{barnes_roitzheim_local}, and we write $[A,B]^{L_1\Sp}$ for morphisms in $\Ho(L_1\Sp)$. Therefore it is sufficient to show that $\Ho(L_1\Sp)$ acts trivially on $\Ho(\C)$, because the action of the stable homotopy groups factors as 
\[
\xymatrix{ \pi_{>0}(\mathbb{S}) \otimes [X,Y]^\C \ar[d] \ar[rr] & & [X,Y]^\C. \\
\pi_{>0}(L_1\mathbb{S})  \otimes [X,Y]^\C \ar[urr]& & 
}
\]

By assumption, $X \wedge^L \theta = 0$ for all $\theta \in \pi_i(L_1\mathbb{S})$, $i=1,...,4p-6$ as the only nontrivial homotopy in this range is the group $\pi_{2p-3}(L_1\mathbb{S})=\mathbb{Z}/p$, which is generated by $\alpha_1$. We have short exact sequences of the form
\[
0 \longrightarrow \pi_{i+1}(L_1\mathbb{S})/(p) \longrightarrow [M,L_1\mathbb{S}]_i = [M,\mathbb{S}]_i^{L_1\Sp} \longrightarrow \Gamma_p(\pi_i(L_1\mathbb{S})) \longrightarrow 0,
\]
where $M$ denotes the mod $p$ Moore spectrum and $\Gamma_pG$ the $p$-torsion of an abelian group $G$.  For degree reasons these short exact sequences take the form
\[
0 \longrightarrow 0 \longrightarrow G \longrightarrow G \longrightarrow 0
\quad
\mathrm{or} 
\quad
0 \longrightarrow G \longrightarrow G \longrightarrow 0 \longrightarrow 0
\]
in low degrees.
Therefore, we can conclude that $[M,\mathbb{S}]^{L_1\Sp}_i=0$ in degrees $0 \leq i \leq 2p-5$. In addition, $[M,\mathbb{S}]^{L_1\Sp}_{2p-4}$ is generated by $\alpha_1 \circ  \mathrm{pinch}$, and  $[M,\mathbb{S}]^{L_1\Sp}_{2p-3}$ is generated by an element $A$ with $A \circ \mathrm{incl} = \alpha_1$.


We have a $v_1$-self map $v_1\colon \Sigma^{2p-2}M \to M$ which is an isomorphism in $E(1)$-homology as $M$ is rationally trivial. Therefore, there is a commutative diagram
\[
\begin{tikzcd}
\left [M, \mathbb{S}\right ]_i^{L_1\Sp} \arrow[rr, "X \wedge^L -"] \arrow[d, "v_1^*"'] \arrow[d, "\cong"] & & \left [X \wedge M, X\right ]_i \arrow[d, "(X\wedge^L v_1)^*"]^{L_1\Sp} \arrow[d, "\cong"'] \\
\left [M, \mathbb{S}\right ]_{i+2p-2}^{L_1\Sp} \arrow[rr, "X \wedge^L -"'] & & \left [X \wedge M, X\right ]_{i+2p-2}^{L_1\Sp}.
\end{tikzcd}\]
We have that $[M,\mathbb{S}]^{L_1\Sp}_i=0$ in degrees $0 \leq i \leq 2p-5$, therefore the top arrow is zero in those degrees. 
In degree $2p-4$, we have that $$X \wedge^L (\alpha_1 \circ  \mathrm{pinch})=(X \wedge^L \alpha_1) \circ (X \wedge^L \mathrm{pinch})= 0 \circ (X \wedge^L \mathrm{pinch})=0$$ by assumption.
In degree $2p-3$, we have $X \wedge^L A =0$ as by our previous calculation, $X \wedge^L A$ is zero if and only if $X \wedge^L \alpha_1$ is, because precomposition with the inclusion map induces an isomorphism on the relevant homotopy groups.
Thus, all in all, the top arrow is zero for  $0 \leq i \leq 2p-3$. 
As the left hand arrow is an isomorphism, the bottom arrow has to be zero to make the diagram commute. 
Iterating this step implies that $X \wedge^L \theta=0$ for all $\theta$ in the non-negative degrees of $[M,\mathbb{S}]^{L_1\Sp}$. Altogether, we have that $X \wedge^L -$ acts trivially on the mod $p$ parts of $\pi_i(L_1 \mathbb{S})$, $i \neq 0$, and also the rational parts of $\pi_i(L_1 \mathbb{S})$, $i \neq 0$, which are trivial. Therefore, $X \wedge^L \theta$ is zero for all $\theta \in \pi_i(L_1\mathbb{S})$, $i\neq 0$ as desired.
\end{proof}

\begin{rem}
A similar argument shows that this is also true for the negative degree elements; indeed the bottom map is zero for $i+2p-2\geq 0$, so the top map is also the zero map in this range and iterating this gives the claim. It is not necessary for the proof of Theorem \ref{thm:klocalcohen} as we only require the claim for positive degrees, but we leave this here in case of a trivial $\pi_*(L_1\mathbb{S})$-action being required for future results. 
\end{rem}
 
 Let us now turn to another source of examples of trivial action. 
 \begin{defi}\label{nilpotence}
A set of ring spectra $\{R_i\}_{i \in I}$ \emph{detects nilpotence} if a map $f\colon \Sigma^k A \longrightarrow A$ in $\Ho(\Sp)$ for a finite spectrum $A$ is nilpotent if and only if $R_i \wedge^L f$ is zero for all $i$. 
 \end{defi}
\begin{prop}\label{lem:nilpotence}
Let $\{R_i\}_{i \in I}$ be a set of ring spectra which detects nilpotence. Then $\Ho(R_i\Mod)$ has trivial $\Ho(\Sp)$-action for all $i$.
\end{prop}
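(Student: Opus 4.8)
The plan is to reduce, via the compact-generator criterion of \cref{prop:generators}, to showing that every positive-degree element of $\pi_*(\mathbb{S})$ acts trivially on the generator $R_i$, and then to obtain this from the hypothesis that $\{R_j\}_{j\in I}$ detects nilpotence together with Nishida's theorem. So fix $i \in I$. The category $R_i\Mod$ is a monogenic, stable, presentable model category with bifibrant compact generator $R_i$, and its spectral enrichment satisfies $\Hom_{R_i}(R_i,R_i)\simeq R_i$; hence, exactly as in the proof of \cref{ringunit}, \cref{prop:generators} reduces us to checking that $R_i \wedge^L \alpha = 0$ for every $\alpha \in \pi_k(\mathbb{S})$ with $k>0$, where $R_i\wedge^L\alpha$ denotes the induced map of spectra $\Sigma^k R_i \to R_i$.

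Next I would fix such an $\alpha \in \pi_k(\mathbb{S})$, $k>0$, and regard it as a self-map $\alpha\colon \Sigma^k\mathbb{S}\to\mathbb{S}$ of the finite spectrum $\mathbb{S}$. By Nishida's theorem every positive-degree element of $\pi_*(\mathbb{S})$ is nilpotent, so $\alpha^{\circ N}=0$ for some $N\geq 1$; that is, $\alpha$ is a nilpotent self-map of $\mathbb{S}$. Since $\{R_j\}_{j\in I}$ detects nilpotence in the sense of \cref{nilpotence}, this forces $R_j\wedge^L\alpha = 0$ for all $j\in I$, and in particular $R_i\wedge^L\alpha=0$. As $\alpha$ was arbitrary, the first paragraph yields that $\Ho(R_i\Mod)$ has trivial $\Ho(\Sp)$-action, and since $i$ was arbitrary this holds for all $i$.

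I do not expect a genuine obstacle here: the argument is a bookkeeping assembly of \cref{prop:generators}, \cref{nilpotence}, and Nishida's theorem. The two points that warrant a line of care are (a) that $\mathbb{S}$ is itself a finite spectrum, so the defining property of ``detects nilpotence'' genuinely applies to $\alpha\colon\Sigma^k\mathbb{S}\to\mathbb{S}$; and (b) that the action of $\alpha$ on the generator $R_i$ computed inside $\Ho(R_i\Mod)$ agrees with the underlying map of spectra $R_i\wedge^L\alpha$, which is precisely the identification $\Hom_{R_i}(R_i,-)\simeq\mathrm{id}$ used implicitly in the proof of \cref{ringunit} and coming from the proof of \cref{prop:generators}.
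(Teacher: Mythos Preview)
Your proof is correct and follows essentially the same route as the paper's: reduce via \cref{prop:generators} to showing $R_i\wedge^L\alpha=0$ for all positive-degree $\alpha\in\pi_*(\mathbb{S})$, invoke Nishida's theorem to see that each such $\alpha$ is nilpotent, and then read off $R_i\wedge^L\alpha=0$ from the ``only if'' direction of \cref{nilpotence}. The paper's proof is just a two-sentence compression of exactly this argument; your points (a) and (b) are fine sanity checks but are not additional ingredients.
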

\begin{proof}
By \cref{prop:generators}, $\Ho(R_i\Mod)$ has trivial $\Ho(\Sp)$-action if and only if $R_i \wedge^L \theta=0$ for all $\theta \in \pi_i(\mathbb{S})$, $i>0$. Nishida's nilpotence theorem~\cite{nishida} shows that all positive degree elements of $\pi_*(\mathbb{S})$ are nilpotent and the claim follows.
\end{proof}

We will use the previous lemma to discuss examples relating to Morava $K$-theory and complex cobordism in the next section.

\changelocaltocdepth{2}
\section{Examples and applications}\label{sec:examples}
In this section we explore trivial actions and algebraicity in various examples arising in stable homotopy theory. Firstly, we explore the case of $E(1)$-local spectra together with exotic models for it, and then we turn to other examples arising from chromatic homotopy theory.

Most of the examples we are dealing with are $p$-local stable model categories in the sense that the action of $\Ho(\Sp)$-action factors over the $p$-local stable homotopy category $\Ho(\Sp_{(p)})$. In this case, one sees that $\C$ has a trivial $\Ho(\Sp)$-action if and only if the positive degree elements of $\pi_*(\mathbb{S}_{(p)})$ act trivially. Similarly, $\C$ has an essentially trivial $\Ho(\Sp)$-action if and only if the element $\alpha_1 \in \pi_{2p-3}(\mathbb{S})$ acts trivially ($p>2$), or if the Hopf elements $\eta$, $\nu$ and $\sigma$ act trivially ($p=2$). 

\subsection{$E(1)$-local spectra and exotic models}\label{sec:exoticE(1)}
In this section we study our algebraicity results in the case of $E(1)$-local spectra $L_1\Sp$ and exotic models for it. Recall that a stable model category $\C$ is said to be an \emph{exotic model} for $L_1\Sp$ if it is not Quillen equivalent to $L_1\Sp$ but does carry a triangulated equivalence as in the proposition below. 
\begin{prop}\label{cor:klocal}
Let $\Phi\colon \Ho(L_1\Sp) \xrightarrow{\sim} \Ho(\C)$ be a triangulated equivalence that is not derived from a Quillen functor, and let $p>2$. Then $\Ho(\Sp)$ acts essentially trivially on $\Ho(\C)$.
\end{prop}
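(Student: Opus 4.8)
The plan is to reduce, via Morita theory, to computing a single element in the homotopy of an endomorphism ring spectrum, and then to exploit the rigidity of the $E(1)$-local category. Concretely: since $\Ho(L_1\Sp)$ is monogenic with compact generator $L_1\s$ and $\Phi$ is a triangulated equivalence, $\Ho(\C)$ is monogenic with (bifibrant) compact generator $G := \Phi(L_1\s)$. As $L_1\Sp$, and hence $\Ho(\C)$, is $p$-local, the $\Ho(\Sp)$-action on $\Ho(\C)$ factors through $\Ho(\Sp_{(p)})$, so by the remarks at the start of this section it suffices to show that $\alpha_1$ acts trivially, and by \cref{prop:generators} this amounts to $G \wedge^L \alpha_1 = 0$ in $[G,G]^\C_{2p-3}$. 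By \cref{thm:morita} we have $\C \simeq_Q R\Mod$ where $R := \hend(G)$ computes $R\Hom_\C(G,G)$, and by \cref{lem:endoaction} the class $G \wedge^L \alpha_1$ is precisely the image of $\alpha_1$ under the unit $\iota_*\colon \pi_{2p-3}(\s) \to \pi_{2p-3}(R)$. Since $\Phi$ is triangulated we have $\pi_*(R) = [G,G]^\C_* \cong [L_1\s,L_1\s]^{L_1\Sp}_* = \pi_*(L_1\s)$ as graded rings, so $\pi_{2p-3}(R) \cong \bbZ/p$ is generated by $\alpha_1$ and $G \wedge^L \alpha_1 = c\cdot \alpha_1$ for some $c \in \bbZ/p$; it remains to rule out $c \neq 0$.

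So suppose $c \neq 0$, i.e.\ that $\alpha_1$ acts non-trivially on $\Ho(\C)$. Applying the triangulated functor $\Phi^{-1}$ to the cofibre sequence $G \xrightarrow{p} G \to G/p$ identifies $\Phi^{-1}(G/p) \simeq L_1(\s/p) = v_1^{-1}(\s/p)$, which carries a periodicity self-equivalence of degree $2(p-1)$; transporting it back shows that the composition ring $\pi_* R\Hom_\C(G/p,G/p) \cong [L_1(\s/p),L_1(\s/p)]^{L_1\Sp}_*$ contains an invertible $v_1$-class, and a chromatic argument then shows that $R\Hom_\C(G/p,G/p)$ — and hence, using the compactness of $G$ and the cofibre sequences relating the mapping spectra, every $R\Hom_\C(X,Y)$ — is $E(1)$-local. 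In particular the $\Ho(\Sp)$-action on $\Ho(\C)$ factors over $\Ho(L_1\Sp)$ by~\cite[Theorem 7.8]{barnes_roitzheim_local}, so that $R$ is an $L_1\s$-algebra with $\pi_*(R) \cong \pi_*(L_1\s)$. The hypothesis $c \neq 0$ is now exactly the input under which the obstruction-theoretic rigidity argument for the $E(1)$-local sphere (in the spirit of Roitzheim's rigidity theorem; cf.~\cite{barnes_roitzheim_local, patchkoriapstragowski}) produces a weak equivalence of ring spectra $R \simeq L_1\s$: the $\alpha_1$-behaviour governs the only obstruction that can be non-zero for degree reasons. Morita theory (\cref{thm:morita}) then yields a Quillen equivalence $\C \simeq_Q L_1\s\Mod \simeq_Q L_1\Sp$, contradicting that $\C$ is an exotic model. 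Hence $c = 0$, so $\alpha_1$ acts trivially and $\Ho(\Sp)$ acts essentially trivially on $\Ho(\C)$. (Once $E(1)$-locality of the mapping spectra is in hand, \cref{thm:klocalcohen} in fact upgrades this to a genuinely trivial action.)

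The main obstacle is the rigidity step: converting ``$\alpha_1$ acts non-trivially'' into ``$\C$ is Quillen equivalent to $L_1\Sp$'' is where all the real content lies, and it rests on the odd-primary $E(1)$-local rigidity/obstruction machinery rather than on the formal results recalled above. The supporting claim that the homotopy mapping spectra are $E(1)$-local is a routine localization computation once the $v_1$-periodicity of $\Phi^{-1}(G/p)$ has been identified, but the precise statement needs care — one should use that the invertible class is genuinely a $v_1$-self-map, so that inverting it after smashing with $\s/p$ kills $K(n)$-homology for $n \neq 1$ and $H\bbZ/p$-homology, and then deduce $E(1)$-locality via the chromatic and arithmetic fracture squares. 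Finally, this phenomenon is specific to exotic models: for $L_1\Sp$ itself one has $G = L_1\s$ and $\alpha_1$ acts non-trivially, which is exactly why $L_1\Sp$ serves as the counterexample in~\cref{ex:L1Sp}.
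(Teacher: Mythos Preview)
Your reduction to the single claim $G \wedge^L \alpha_1 = 0$ for $G = \Phi(L_1\s)$ via \cref{prop:generators} is exactly the skeleton of the paper's proof. The paper then simply invokes \cite[Proposition~6.7]{roitzheim2007rigidity}, which is precisely the statement that $\Phi(L_1\s)\wedge^L\alpha_1=0$ whenever $\Phi$ is not derived from a Quillen functor; the proof is two lines. What you attempt instead is to re-derive that proposition by outlining the contrapositive of the $E(1)$-local rigidity argument, and this is where the proposal breaks.

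The gap is the ``chromatic argument'' claiming $E(1)$-locality of the mapping spectra. The invertible degree-$2(p-1)$ class in $[G/p,G/p]^\C_*$ comes purely from the triangulated equivalence $\Phi$ and uses nothing about $c\neq 0$; moreover, a periodic automorphism of a spectrum tells you nothing about its $K(n)$-homology, so it cannot force $R\Hom_\C(G/p,G/p)$ to be $E(1)$-local. (The phrase ``the invertible class is genuinely a $v_1$-self-map'' conflates a morphism in $\Ho(\C)$ with a map of spectra: $G/p$ is an object of $\C$, not of $\Sp$, and the Hopkins--Smith notion does not apply to it.) Were this step valid it would apply equally in the case $c=0$, contradicting the remark immediately following the proposition: the mapping spectra in any exotic model of $L_1\Sp$ are \emph{not} $E(1)$-local \cite{roitzheim2007rigidity}. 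For the same reason your closing parenthetical is wrong: after discharging the contradiction you have $c=0$ and no $E(1)$-locality, so \cref{thm:klocalcohen} is unavailable; this is exactly why the paper only claims essentially trivial action for a general exotic model.
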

\begin{proof}
By \cite[Proposition 6.7]{roitzheim2007rigidity}, $X \wedge \alpha_1=0$ for $X=\Phi(L_1S^0)$. As $X$ is a compact generator of $\Ho(\C)$, the claim follows from \cref{prop:generators}.
\end{proof}
\begin{rem}
One might imagine that \cref{thm:klocalcohen} can be applied in the previous corollary to deduce that $\Ho(\Sp)$ acts trivially on $\Ho(\C)$. However, \cite{roitzheim2007rigidity} shows that the mapping spectra in any exotic model are \emph{not} $E(1)$-local.
\end{rem}

Note that the existence of such a triangulated equivalence $\Phi$ for odd primes has been shown by~\cite{franke1996uniqueness, patchkoria2017exotic, patchkoriapstragowski} (the latter two papers correcting some gaps in the former). Next we recall the definition of Franke's exotic model for $E(1)$-local spectra; we refer the reader to~\cite{franke1996uniqueness, roi08, barnes_roitzheim_monoidal} for more details. We then turn to examining the exotic model in more detail with regards to algebraicity.

Given a Grothendieck abelian category $\A$, a self-equivalence $T\colon \A \to \A$ and an integer $N$, a \emph{twisted chain complex (of period $N$)} is a chain complex $X \in \mathrm{Ch}(\A)$ together with an isomorphism $\alpha_X \colon TX \to \Sigma^NX$. A map of twisted chain complexes $f\colon X \to Y$ is a chain map which is compatible with the specified isomorphisms $\alpha_X$ and $\alpha_Y$. We denote this category by $\tw$. There is an adjunction
\begin{equation}\label{eq:Frankeadjunction}
\begin{tikzcd}
\mathrm{Ch}(\A) \arrow[r, yshift=1mm, "\mathbb{P}"] & \tw . \arrow[l, yshift=-1mm, "\mathbb{U}"] 
\end{tikzcd}
\end{equation}
It is also helpful to note that if $\A$ is monoidal with unit object $\mathds{1}$, then (under some mild hypotheses) the category $\tw$ is isomorphic to the category of $\mathbb{P}\mathds{1}$-modules in $\mathrm{Ch}(\A)$.

Fix an odd prime $p$. Franke's exotic model for $E(1)$-local spectra is the category of twisted chain complexes of period $1$ over the abelian category $\A = E(1)_*E(1)\text{-}\mathrm{comod}$, of comodules over the flat Hopf algebroid $E(1)_*E(1)$. We denote this category by $\mathrm{Fr}_{1,p}$. This category admits several model structures, see~\cite{barnes_roitzheim_monoidal} for details. Firstly, $\mathrm{Fr}_{1,p}$ admits an injective model structure and this presents the homotopy theory of interest. As such, we will refer to $\mathrm{Fr}_{1,p}$ with the injective model structure as ``Franke's category''. However, the injective model structure is not monoidal in general (nor $\Ch$-enriched) and so for our purposes we are interested in the quasi-projective model structure, which is Quillen equivalent to the injective model structure as well as a monoidal model category. We briefly recall the construction of the quasi-projective model structure as in~\cite[\S 6]{barnes_roitzheim_monoidal} now. 

The category $\mathrm{Ch}(E(1)_*E(1)\text{-}\mathrm{comod})$ admits a relative projective model structure in the sense of~\cite{christensen_hovey} by taking the projective class generated by the dualizable $E(1)_*E(1)$-comodules, see also~\cite{hoveycomod} for more details. This can be right-lifted along the adjunction~(\ref{eq:Frankeadjunction}) to produce the relative projective model structure on $\mathrm{Fr}_{1,p}$. This model structure has fewer weak equivalences than the quasi-isomorphisms, so it does not present Franke's exotic model. Instead, left Bousfield localizing the relative projective model structure on $\mathrm{Fr}_{1,p}$ produces the quasi-projective model structure, in which the weak equivalences are the quasi-isomorphisms.
\begin{prop}\label{prop:fralg}
Franke's category $\mathrm{Fr}_{1,p}$ is an algebraic model category.
\end{prop}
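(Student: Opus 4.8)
The plan is to exhibit Franke's category $\mathrm{Fr}_{1,p}$, equipped with the quasi-projective model structure, as a $\Ch$-enriched combinatorial model category and then appeal directly to \cref{defn:algebraic}. The key observation is the structural description recalled just above the statement: the category $\tw$ (with $\A = E(1)_*E(1)\text{-}\mathrm{comod}$ and $N=1$) is isomorphic to the category of $\mathbb{P}\mathds{1}$-modules inside $\mathrm{Ch}(\A)$. Since $\mathrm{Ch}(\A)$ is a $\Ch$-enriched (indeed $\Ch$-tensored and cotensored) category in the evident way, and the quasi-projective model structure on $\mathrm{Fr}_{1,p}$ is a monoidal model category by \cite[\S 6]{barnes_roitzheim_monoidal}, the category of modules over the monoid $\mathbb{P}\mathds{1}$ inherits a $\Ch$-enrichment from the ambient category. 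The first step, then, is to verify carefully that this inherited enrichment is compatible with the quasi-projective model structure in the sense of \cite[\S 4.3]{gm_enriched}, i.e.\ that the pushout-product and its adjoint SM7-type axiom hold; this should follow formally from the fact that $\mathrm{Fr}_{1,p}$ is monoidal and that $\Ch \to \mathrm{Fr}_{1,p}$ (sending a complex $C$ to $\mathbb{P}(C \otimes \mathds{1})$, or more simply using the unit) is a monoidal left Quillen functor, so that modules over a monoid become enriched over $\Ch$ via restriction along this functor.

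Second, I would check combinatoriality. The category $\mathrm{Ch}(\A)$ is locally presentable since $\A = E(1)_*E(1)\text{-}\mathrm{comod}$ is a Grothendieck abelian category, hence locally presentable; passing to $\mathbb{P}\mathds{1}$-modules (equivalently $\tw$) preserves local presentability as it is a category of modules over a monoid in a locally presentable monoidal category. The relative projective model structure on $\mathrm{Ch}(\A)$ is cofibrantly generated — it is obtained from the projective class of dualizable comodules via \cite{christensen_hovey} — and right-lifting along the adjunction~(\ref{eq:Frankeadjunction}) preserves cofibrant generation, as does the subsequent left Bousfield localization producing the quasi-projective structure (left Bousfield localizations of combinatorial model categories are combinatorial). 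So $\mathrm{Fr}_{1,p}$ with the quasi-projective model structure is combinatorial.

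Third, having established that $\mathrm{Fr}_{1,p}$ with the quasi-projective model structure is a combinatorial $\Ch$-enriched model category, I would invoke the Quillen equivalence between the quasi-projective and injective model structures on $\mathrm{Fr}_{1,p}$ noted in the excerpt (following \cite{barnes_roitzheim_monoidal}). Since ``Franke's category'' refers to $\mathrm{Fr}_{1,p}$ with the injective model structure, and since being algebraic is invariant under Quillen equivalence by \cref{defn:algebraic}, it follows that Franke's category is algebraic.

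The main obstacle I anticipate is the first step: confirming that the $\Ch$-enrichment coming from viewing $\tw$ as $\mathbb{P}\mathds{1}$-modules in $\mathrm{Ch}(\A)$ genuinely interacts correctly with the \emph{quasi-projective} (Bousfield-localized) model structure, rather than just with the relative projective one. The enrichment and tensoring are easy to write down, but the pushout-product axiom with respect to the localized cofibrations and the (co)fibrant objects of the local model structure requires knowing that the monoidal structure descends compatibly to the localization — this is exactly the content that \cite{barnes_roitzheim_monoidal} establishes, so the real work is citing it precisely and matching conventions, rather than proving anything new. Everything else (local presentability, cofibrant generation, invariance under Quillen equivalence) is routine.
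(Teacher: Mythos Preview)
Your overall architecture matches the paper's exactly: show the quasi-projective model structure on $\mathrm{Fr}_{1,p}$ is combinatorial and $\Ch$-enriched, then transfer along the Quillen equivalence to the injective model. Your combinatoriality argument is essentially identical to the paper's, and the final transfer step is the same.

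The difference is in the SM7 step, and here you are too optimistic. You assert that the $\Ch$-enrichment axiom ``should follow formally'' from monoidality of the quasi-projective structure together with a monoidal left Quillen functor $\Ch \to \mathrm{Fr}_{1,p}$. That reduction is correct, but the existence of such a left Quillen functor is precisely what needs to be checked, and it is \emph{not} the content of \cite{barnes_roitzheim_monoidal}: that reference gives monoidality of $\mathrm{Fr}_{1,p}$, not the Quillen compatibility of the free/forgetful adjunction with $\Ch$. Equivalently, one must show that the forgetful functor $\mathrm{Fr}_{1,p} \to \Ch$ is right Quillen, i.e.\ that quasi-projective fibrations and acyclic fibrations become surjections and surjective quasi-isomorphisms in $\Ch$. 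The paper carries this out explicitly via the chain
\[
\text{quasi-projective fibration} \Rightarrow \text{relative projective fibration} \Rightarrow \text{surjection},
\]
where the first implication uses that the quasi-projective model is a left Bousfield localization of the relative projective one, and the second is \cite[Proposition~2.1.5]{hoveycomod}. This last input---Hovey's identification of relative projective fibrations as surjections---is the missing ingredient in your sketch; without it, there is no reason a fibration in the localized model structure on $\mathrm{Fr}_{1,p}$ should be a fibration in the projective model structure on $\Ch$. So your plan is right, but the step you flag as ``just matching conventions'' is where the actual argument lives.
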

\begin{proof}
Firstly, the underlying category of $\mathrm{Fr}_{1,p}$ is locally presentable by~\cite[Theorem 5.5.9]{Borceux} as $\mathrm{Ch}(E(1)_*E(1)\text{-}\mathrm{comod})$ is locally presentable. The quasi-projective model structure on $\mathrm{Fr}_{1,p}$ is cofibrantly generated by construction, and hence is combinatorial. We write $\underline{\Hom}(-,-)$ for the internal hom in $\mathrm{Fr}_{1,p}$ and note that this also provides the enrichment in $\Ch$ via the forgetful functor. Now, let $i\colon A \hookrightarrow X$ be a cofibration and $p\colon E \twoheadrightarrow B$ be a fibration in the quasi-projective model structure on $\mathrm{Fr}_{1,p}$. We must show that 
\[\underline{\Hom}(i^*, p_*) \colon \underline{\Hom}(X,E) \to \underline{\Hom}(X,B) \times_{\underline{\Hom}(A,B)} \underline{\Hom}(A,E)\] is a fibration in $\Ch$ which is moreover acyclic if either $i$ or $p$ is acyclic.

Since the quasi-projective model structure on $\mathrm{Fr}_{1,p}$ is monoidal by~\cite[Corollary 6.7]{barnes_roitzheim_monoidal}, we have that $\underline{\Hom}(i^*, p_*)$ is a quasi-projective fibration which is acyclic if either $i$ or $p$ is. By definition, $\underline{\Hom}(i^*, p_*)$ is also a quasi-projective fibration in $\mathrm{Ch}(E(1)_*E(1)\text{-}\mathrm{comod})$ which is acyclic if either $i$ or $p$ is. It follows that it is also a fibration in the relative projective model structure which is acyclic if either $i$ or $p$ is, since the quasi-projective model is a left Bousfield localization of the relative projective model. Any fibration (resp., weak equivalence) in the relative projective model is a surjection (resp., quasi-isomorphism) by~\cite[Proposition 2.1.5]{hoveycomod}. Therefore $\underline{\Hom}(i^*, p_*)$ is a surjection which is a quasi-isomorphism if either $i$ or $p$ is acyclic, as required.

We have now shown that the quasi-projective model structure on $\mathrm{Fr}_{1,p}$ is combinatorial, $\Ch$-enriched, and is Quillen equivalent to the injective model structure. Therefore the injective model structure on $\mathrm{Fr}_{1,p}$ is indeed algebraic.
\end{proof}

We now summarize the relationships between the various versions of algebraicity studied above for $E(1)$-local spectra and Franke's category.
\begin{exam}[$E(1)$-local spectra]\label{ex:L1Sp}
Consider the category of $E(1)$-local spectra at a fixed prime $p \geq 3$. Since there is a triangulated equivalence $\Ho(L_1\Sp) \simeq \Ho(\mathrm{Fr}_{1,p})$ one sees that $L_1\Sp$ is triangulated algebraic. As $$H\mathbb{F}_p \wedge L_1\mathbb{S} \simeq L_1(H\mathbb{F}_p) \simeq 0$$ (see~\cite[\S 5.2]{Gutierrez}), $L_1\mathbb{S}$ is not an $H\bbZ$-module. If it were, then by \cref{prop:CG}
\[
L_1 \mathbb{S} \simeq \bigvee_{i \in \mathbb{Z}} \Sigma^i H\pi_i(L_1\mathbb{S}),
\]
and therefore smashing with $H\mathbb{F}_p$ could not be trivial as for example $\pi_{2p-3}(L_1 \mathbb{S})=\mathbb{Z}/p$. 
 It then follows from \cref{cor:Rmodalgebraic} and \cref{prop:dz} that $L_1\Sp$ is neither algebraic nor has $\mathsf{D}(\bbZ)$-action. Finally, $\Ho(\Sp)$ does not act trivially on $\Ho(L_1\Sp)$ as $\alpha_1$ acts nontrivially.
\end{exam}

\begin{exam}[Franke's exotic model $\mathrm{Fr}_{1,p}$]\label{ex:fr}
Consider Franke's exotic model $\mathrm{Fr}_{1,p}$ for $E(1)$-local spectra at an odd prime. This is algebraic by \cref{prop:fralg}, and hence is triangulated algebraic, has $\mathsf{D}(\bbZ)$-action and has trivial $\Ho(\Sp)$ action by \cref{prop:alg_triang}, \cref{cor:dz} and \cref{cor:dz_trivial} respectively.
\end{exam}

\begin{exam}[General exotic models for $E(1)$-local spectra]\label{ex:E1exotic}
Finally we consider the case of a general exotic model for $L_1\Sp$ at an odd prime; that is, we consider a stable model category $\C$ which has a triangulated equivalence $\Phi\colon \Ho(L_1\Sp) \to \Ho(\C)$ that is not derived from a Quillen functor. The existence of $\Phi$ shows that $\C$ is triangulated equivalent to Franke's category $\mathrm{Fr}_{1,p}$ and hence $\C$ is triangulated algebraic. \cref{cor:klocal} shows that $\C$ also has essentially trivial $\Ho(\Sp)$-action. Unlike the case of Franke's exotic model described in \cref{ex:fr}, we do not know whether an arbitrary exotic model of $L_1\Sp$ is algebraic or has a $\mathsf{D}(\bbZ)$-action.
\end{exam}

\subsection{Morava $K$-theory}\label{sec:kn}

In this example we consider the category of $K(n)$-modules. We work at some implicit prime $p$.

Let $K(n)$ denote the Morava $K$-theory spectrum for $0 < n < \infty$. We see that $K(n)$ is not an $H\bbZ$-module since $H\mathbb{F}_p \wedge K(n) \simeq 0$~\cite[Theorem 2.1(i)]{ravenel1984localization}; if $K(n)$ were an $H\mathbb{Z}$-module spectrum, then its underlying spectrum would be a wedge of suspensions of $H\mathbb{F}_p$ by \cref{prop:CG}, and in particular $H\mathbb{F}_p \wedge K(n)$ would be nontrivial. 

Since $\{K(n)\}_{0 \leq n \leq \infty}$ detects nilpotence by~\cite[Theorem 3]{hopkins_smith}, the homotopy category of modules over the $n^{\mathrm{th}}$ Morava $K$-theory $K(n)$ has a trivial action of the stable homotopy category by \cref{lem:nilpotence}. However, we just noted (and will calculate explicitly for $n=1$ later in this subsection) that for $0 < n<\infty$ that $K(n)$ is not an $H\bbZ$-module. Therefore by \cref{cor:Rmodalgebraic} and \cref{prop:dz}, we see that $K(n)\Mod$ is neither algebraic, nor has a $\mathsf{D}(\bbZ)$-action. This illustrates that being algebraic and having $\mathsf{D}(\bbZ)$-action are both stronger properties than having trivial $\Ho(\Sp)$-action. 

We also know that 
\[
\pi_*\colon \Ho(K(n)\Mod) \longrightarrow \mathsf{D}(K(n)_*)
\]
is a triangulated equivalence. This is because $K(n)$ is a ``field'' in the sense that all modules over $K(n)$ are equivalent to a wedge of suspensions of $K(n)$ itself \cite{ravenel_orange}, while on the other side the analogue holds for a differential graded module over $K(n)_*$. Of course, $K(n)_*\Mod$ is algebraic, whereas we just illustrated why $K(n)\Mod$ is not, which in particular means that they are not Quillen equivalent, as pointed out by~\cite[Remark 2.5]{schwedeshipleyuniqueness}. The triangulated equivalence $\Ho(K(n)\Mod) \simeq \mathsf{D}(K(n)_*)$ makes $K(n)\Mod$ triangulated algebraic. 

We can see the extent to which $K(1)\Mod$ fails at being algebraic in the following obstruction theory.

Given a spectrum $E$, we would like to examine if there is a map from an Eilenberg-Mac~Lane spectrum $A$ with the correct homotopy groups to $E$. There is always a map from the respective Moore spectrum to $E$. An Eilenberg-Mac~Lane spectrum is constructed from a Moore spectrum by attaching cells, and we can see whether an already existing map from a CW-spectrum to $E$ lifts over the CW-spectrum with new cells attached. Note that we are just considering maps of spectra, not ring maps.

In more detail, we would like to see if there is a map 
 $$ \Sigma^i H(\pi_i(E)) \longrightarrow E$$ 
 inducing a $\pi_i$-isomorphism. If there is such a map for all $i$, then it assembles to a weak equivalence
 \[
 \bigvee_{i \in \mathbb{Z}} \Sigma^i H(\pi_i(E)) \longrightarrow E.
 \]
Without loss of generality, let $i=0$ and define $A:= H(\pi_i(E))$. Then $A= \colim_k A^{(k)}$, where $A^{(1)}$ is the $\pi_0(E)$-Moore spectrum, and $A^{(k+1)}$ is obtained from $A^{(k)}$ via attaching $(k+1)$-cells to kill the $k$-homotopy groups of $A^{(k)}$. In other words, we have the exact triangle 
\[
\bigvee\limits_{\phi \in \pi_k(A^{(k)})} \Sigma^k\mathbb{S} \xrightarrow{\vee \phi} A^{(k)} \longrightarrow A^{(k+1)} \longrightarrow \bigvee \Sigma^{k+1}\mathbb{S}.
\]
We can always construct a map $A^{(1)} \longrightarrow E$, via the following.

Let $G=\pi_0(E)$. Then $A^{(1)}$ is the Moore spectrum $MG$, constructed via an exact triangle
\[
\bigvee \mathbb{S} \xrightarrow{\rho} \bigvee\mathbb{S} \longrightarrow A^{(1)}=MG \longrightarrow \bigvee \Sigma\mathbb{S},
\]
where the first wedge of sphere runs over the relations in $G$ and the second wedge over the generators, see \cite[Example 7.4.7]{brbook}.
We extend this to a diagram
\[
\xymatrix{ \bigvee \mathbb{S} \ar[r]^{\rho} & \bigvee \mathbb{S} \ar[r] \ar[d]_{f_0=(\vee g)} & A^{(1)} \ar[r] \ar@{.>}[dl] & \bigvee \Sigma \mathbb{S}, \\
& E & & 
}
\]
where $g$ runs over the generators of $\pi_0(E)=[\mathbb{S},E]$. By construction, $f_0 \circ \rho =  \vee g \circ \rho =0$. Therefore, the dotted arrow exists, and we have a map $f_1\colon A^{(1)} \longrightarrow E.$

If we have a map $f_{k}\colon A^{(k)}\longrightarrow E$, then this extends to a map $f_{k+1}\colon A^{(k+1)}\longrightarrow E$ if and only if the composite
\[
\bigvee \Sigma^k\mathbb{S} \xrightarrow{\varphi=\vee\phi} A^{(k)} \xrightarrow{f_{k}} E
\]
is zero. Therefore, the obstruction for the existence of $f_k$ lies in 
\[
\varphi^*[A^{(k)},E] = \{ f \circ \varphi \mid f \in [A^{(k)},E] \} .
\]
In particular, if all those cosets are trivial, then we have the desired map $f\colon A \longrightarrow E$.

Now let us examine $\varphi^*[A^{(k)},E]$ in the case of $E=R\Hom(X,X)$, where $X$ is a compact generator of the stable model category $\C$. The map $f_{k} \circ \varphi$ is adjoint  to the map $F \circ (X \wedge^L \varphi)$, where $$F= \epsilon \circ (X \wedge^L f_{k}) \in [X \wedge^L A^{(k)}, X]^\C$$ is adjoint to $f_{k}$ and $$\epsilon\colon X \wedge^L R\Hom(X,X) \longrightarrow X$$ is the counit of the adjunction. Therefore we can say that the obstruction for the existence of $$f_{k+1}\colon A^{(k+1)} \longrightarrow R\Hom(X,X)$$ lies in
\[
(X \wedge^L \varphi)^* \Big( [X \wedge^L A^{(k)}, X]^\C\Big).
\]
For $R\Hom(X,Y)$, the obstructions lie in 
\[
(X \wedge^L \varphi)^* \Big( [X \wedge^L A^{(k)}, Y]^\C\Big).
\]
Therefore, by \cref{cor:emlaction} we have the following.

\begin{prop}
If there is a $\mathsf{D}(\bbZ)$-action on $\Ho(\C)$, then $$(X \wedge^L \varphi)^* \Big( [X \wedge^L A^{(k)}, Y]^\C\Big)=0$$ for all $k$. \qed
\end{prop}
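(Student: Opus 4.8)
The plan is to reduce the statement to \cref{cor:emlaction}. Since $\Ho(\C)$ carries a $\mathsf{D}(\bbZ)$-action, \cref{prop:dz} shows that the homotopy mapping spectrum $R\Hom(X,Y)$ is a generalized Eilenberg--Mac~Lane spectrum, so \cref{cor:emlaction} applies and gives $\alpha \otimes f = 0$ for every $\alpha \in \pi_i(\mathbb{S})$ with $i>0$ and every $f \in [X,Y]^\C_*$. The goal is then to write each element of the set $(X \wedge^L \varphi)^*\big([X \wedge^L A^{(k)}, Y]^\C\big)$ componentwise in terms of such expressions $\alpha \otimes f$.

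First I would unwind the set. A typical element is $F \circ (X \wedge^L \varphi)$ for some $F \in [X \wedge^L A^{(k)}, Y]^\C$, where $\varphi = \bigvee_{\phi} \phi$ runs over the attaching classes $\phi \in \pi_k(A^{(k)})$. Since $X \wedge^L -$ is a left adjoint it preserves coproducts, and $[-,Y]^\C$ turns coproducts into products, so it is enough to show that each component
\[
F \circ (X \wedge^L \phi) \in [\Sigma^k X, Y]^\C = [X,Y]^\C_k
\]
vanishes.

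The heart of the matter, and the step I expect to be the main obstacle, is to factor each attaching class $\phi \colon \Sigma^k\mathbb{S} \to A^{(k)}$ (with $k \geq 1$) through a positive-degree stable homotopy class, say $\phi = \psi \circ \alpha$ with $\psi \in \pi_m(A^{(k)})$ and $\alpha \in \pi_{k-m}(\mathbb{S})$, $k-m>0$. Granting this, the centrality and associativity of the $\Ho(\Sp)$-action (the identities established inside the proof of \cref{lem:endoaction}) give
\begin{align*}
F \circ (X \wedge^L \phi)
&= \big(F \circ (X \wedge^L \psi)\big) \circ (X \wedge^L \alpha) \\
&= \big(F \circ (X \wedge^L \psi)\big) \wedge^L \alpha
= \alpha \otimes \big(F \circ (X \wedge^L \psi)\big),
\end{align*}
and since $F \circ (X \wedge^L \psi) \in [\Sigma^m X, Y]^\C = [X,Y]^\C_m$ this vanishes by \cref{cor:emlaction}. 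Summing over the components $\phi$ then yields $F \circ (X \wedge^L \varphi) = 0$, as required, and this holds for all $k$.

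It thus remains to justify the factorisation of the attaching classes, which is where the structure of the cellular construction of $A = H(\pi_i(E))$ is used. For the bottom layer $A^{(1)} = M(\pi_0(E))$ this is transparent: the Moore spectrum has integral homology concentrated in degree $0$, so any $\phi \in \pi_k(A^{(1)})$ with $k \geq 1$ factors through a positive stem (at $p=2$ the classes in $\pi_1$ are of the form $\iota \circ \eta$ with $\iota \in \pi_0(A^{(1)})$, with the analogous statement at odd primes). For the higher layers $A^{(k)}$ one must arrange the cell attachments so that the newly introduced attaching classes again retain positive Adams filtration over $\mathbb{S}$ --- equivalently, become null after smashing with $H\bbZ$, i.e.\ after mapping into a generalized Eilenberg--Mac~Lane spectrum. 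Establishing this control at every stage is the delicate part of the argument; once it is in hand, the computation above closes the statement.
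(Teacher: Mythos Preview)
Your opening move---use \cref{prop:dz} to see that $R\Hom(X,Y)$ is a generalized Eilenberg--Mac~Lane spectrum and then invoke \cref{cor:emlaction}---is exactly the paper's one-line justification, and the componentwise splitting along $\varphi=\bigvee\phi$ is sound. The difficulty is entirely in the factorisation $\phi=\psi\circ\alpha$ with $\alpha\in\pi_{>0}(\mathbb{S})$, and this step is not merely ``delicate'': it fails. Take $G=\pi_0(E)=\bbZ$, so that $A^{(1)}=\mathbb{S}$ and $A^{(2)}=C\eta$. One computes $\pi_1(C\eta)=0$ and $\pi_2(C\eta)\cong\bbZ$, so any factorisation of a generator $\phi\in\pi_2(C\eta)$ would force $\psi\in\pi_0(C\eta)=\bbZ$ and $\alpha\in\pi_2(\mathbb{S})=\bbZ/2$; the composite is then $2$-torsion and cannot hit a generator of $\bbZ$. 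Equivalently, the Hurewicz image of $\phi$ is $2\in H_2(C\eta)\cong\bbZ$, so $H\bbZ\wedge\phi\neq 0$: the attaching classes do \emph{not} become null after smashing with $H\bbZ$, contrary to your proposed reformulation. The paper's own odd-primary calculation shows the same phenomenon one stage higher: the attaching class $F\circ v_1\circ\mathrm{incl}\in\pi_{2p-2}(A^{(2p-2)})$ cannot factor through $\pi_{2p-2}(\mathbb{S}_{(p)})=0$.

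In fact the same example shows that the statement, read literally as the vanishing of the \emph{entire} image $(X\wedge^L\varphi)^*[X\wedge^L A^{(k)},Y]^\C$, is too strong. With $\C=H\bbZ\Mod$, $X=H\bbZ$ and $Y=H\bbZ\vee\Sigma^2 H\bbZ$ (so the $\mathsf{D}(\bbZ)$-action hypothesis certainly holds) one has $X\wedge^L C\eta\simeq H\bbZ\vee\Sigma^2 H\bbZ$, and taking $F$ to be projection onto the top summand gives $F\circ(X\wedge^L\phi)=2\neq 0$ in $[\Sigma^2 X,Y]^\C\cong\bbZ$. What the obstruction theory genuinely needs---and what the $\mathsf{D}(\bbZ)$-hypothesis does supply, since then $R\Hom(X,Y)$ is already a GEM and the desired map $A\to R\Hom(X,Y)$ visibly exists---is only that the \emph{specific} obstruction $f_k\circ\varphi$ vanishes for some choice of lift $f_k$, not that every element of $\varphi^*[A^{(k)},E]$ is zero. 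So neither your factorisation route nor the bare appeal to \cref{cor:emlaction} establishes the proposition as written.
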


\begin{exam}
Let us return to our example of Morava $K$-theory and $\C=K(1)\mbox{-mod}$. As noted at the beginning of the section, we are working $p$-locally.
We would like to use the obstruction theory to construct a map
\[
A=H\mathbb{Z}/p \longrightarrow E = \Hom_{K(1)}(K(1), K(1)) \cong K(1).
\]
As $K(1)$ is not an Eilenberg-Mac Lane spectrum, we will see exactly where this fails.
Note that $K(1)_*=\mathbb{F}_p[v_1, v_1^{-1}]$ with $|v_1|=2p-2$, so in particular $\pi_0(K(1))=\mathbb{Z}/p$, which is generated by the unit $\iota\colon \mathbb{S} \longrightarrow K(1).$ By the method described earlier, we have the following commutative diagram, in which $M$ is the mod-$p$ Moore spectrum.
\[
\begin{tikzcd} \mathbb{S} \arrow[r, "p"] & \mathbb{S} \arrow[rr, "\mathrm{incl}"] \arrow[d, "\iota"] &  & M=A^{(1)} \arrow[rr, "\mathrm{pinch}"] \arrow[dll, "\bar{\iota}"] & & \Sigma \mathbb{S} \\
 &  K(1) & &  & &
\end{tikzcd}
\]

The long exact sequence of homotopy groups tells us that the next nontrivial homotopy group of $M$ is
\[
\mathrm{incl}_*\colon \pi_{2p-3}(\mathbb{S}) \xrightarrow{\cong} \pi_{2p-3}(M),
\]
so $\pi_{2p-3}(M) \cong \mathbb{Z}/p$ is generated by $\mathrm{incl} \circ \alpha_1$.
Furthermore, with regards to the obstruction theory, we have that $$M=A^{(1)} = A^{(2)} = \cdots = A^{(2p-3)}.$$ To work out the next obstruction, we look at the following diagram.
\[
\begin{tikzcd}
\Sigma^{2p-3} \mathbb{S} \arrow[rr, "\mathrm{incl} \circ \alpha_1"] & &  A^{(2p-3)}=M \arrow[rr, "F"] \arrow[d, "\bar{\iota}"] & & A^{(2p-2)} \arrow[r, "G"] \arrow[dll, dotted, "\tilde{\iota}"] & \Sigma^{2p-2}\mathbb{S} \\
& & K(1) & & & 
\end{tikzcd}
\]
As we have that $\pi_{2p-3}(K(1))=0$, the map $\bar{\iota} \circ \mathrm{incl} \circ \alpha_1$ is zero for degree reasons, and the dotted map $\tilde{\iota}$ exists. The next step of the obstruction is to investigate the existence of the dotted arrow in this diagram.
\[
\begin{tikzcd} \bigvee \Sigma^{2p-2} \mathbb{S} \arrow[rr, "\vee\phi"] & &  A^{(2p-2)} \arrow[rr] \ar[d, "\tilde{\iota}"] & & A^{(2p-1)} \arrow[r] \arrow[dll, dotted] & \bigvee \Sigma^{2p-1}\mathbb{S} \\
& & K(1) & & & 
\end{tikzcd}
\]
To find out what $\tilde{\iota} \circ (\vee \phi)$ is, we need to know $\pi_{2p-2}(A^{(2p-2)})$. We apply the long exact sequence of homotopy groups to the exact triangle defining $A^{(2p-2)}$ and obtain
\[
\pi_{2p-2}(\Sigma^{2p-3} \mathbb{S}) \xrightarrow{(\mathrm{incl}\circ \alpha_1)_*} \pi_{2p-2}(M) \xrightarrow{F_*} \pi_{2p-2}(A^{(2p-2)}) \xrightarrow{G_*} \pi_{2p-2}(\Sigma^{2p-2} \mathbb{S}) \xrightarrow{(\mathrm{incl}\circ \alpha_1)_*} \pi_{2p-3}(M).
\]

Clearly, the first term of this sequence is trivial, so $F_*$ is an injection.

The long exact sequence of homotopy groups tells us that $\pi_{2p-3}(M) \cong \mathbb{Z}/p$ is generated by $\mathrm{incl} \circ \alpha_1$, so the final map of the previous sequence has kernel $p\mathbb{Z}$, which is also the image of $G_*$.



Again, we use the long exact sequence of homotopy groups to find out that 
\[
\mathrm{pinch}_*: \pi_{2p-2}(M) \longrightarrow \pi_{2p-3}(\mathbb{S})
\]
is an isomorphism, and we know that $ \pi_{2p-3}(\mathbb{S}) \cong \bbZ/p$ is generated by the element $\alpha_1$. 
Therefore,
$\pi_{2p-2}(M)\cong \mathbb{Z}/p$ is generated by an element $x$ with $\mathrm{pinch} \circ x= \alpha_1$. As 
\[
\alpha_1=\mathrm{pinch} \circ v_1 \circ \mathrm{incl}
\]
by \cite[Section 6.2]{roitzheim2007rigidity}, we have that $\pi_{2p-2}(M)$ is generated by $v_1 \circ \mathrm{incl}$.


As $F_*$ is an injection, we know that $F \circ v_1 \circ \mathrm{incl} \neq 0.$
Thus, we need to calculate $\tilde{\iota} \circ F \circ v_1 \circ \mathrm{incl}$. By construction, $\tilde{\iota} \circ F = \bar{\iota}.$ By \cite[Lemma 6.1.4]{ravenel_orange}, we have that $\bar{\iota}\circ v_1 = v_1 \circ \bar{\iota}$, so \[
\tilde{\iota} \circ F \circ v_1 \circ \mathrm{incl} = \bar{\iota} \circ v_1 \circ \mathrm{incl} = v_1 \circ \bar{\iota} \circ \mathrm{incl} = v_1 \circ \iota.
\]
This is precisely the generator of $\pi_{2p-2}(K(1))$ and hence not trivial. Therefore, the obstruction to extend $\tilde{\iota}$ to $A^{(2p-1)}$ is nonzero, and we arrive at the expected result that there is no nontrivial map from $H\mathbb{Z}/p$ to $K(1)$, thus revisiting the result that $K(1)$ is not an Eilenberg-Mac Lane spectrum. In particular, the category of $K(1)$-modules does not carry a $\mathsf{D}(\mathbb{Z})$-action. An analogous calculation holds for $k(1)$ instead of $K(1)$.

It is a subject for future research to see if the obstruction theory can be exploited further to determine the level of algebraicity of some other examples. 
\end{exam}

We now summarize the key findings of this section in the following example.
\begin{exam}\label{exam:moravaK}
For any $0 < n < \infty$, the Morava $K$-theory spectrum $K(n)$ is not an $H\bbZ$-module and therefore the category $K(n)\Mod$ is neither algebraic nor has $\mathsf{D}(\bbZ)$-action. However, there is a triangulated equivalence $\Ho(K(n)\Mod) \simeq \mathsf{D}(K(n)_*)$ making $K(n)\Mod$ triangulated algebraic. The category $K(n)\Mod$ also has trivial $\Ho(\Sp)$-action. On the other hand, the category $K(n)_*\Mod$ is algebraic. As such, the triangulated equivalence on homotopy categories between $K(n)\Mod$ and $K(n)_*\Mod$ cannot be lifted to a Quillen equivalence.
\end{exam}

\subsection{Further examples from chromatic homotopy theory}
In this subsection, we explore some other examples, which arise from the work of~\cite{patchkoria2012alg, patchkoria2017derived, patchkoria2017exotic, patchkoriapstragowski}.

In general, it is difficult to establish whether a model category is triangulated algebraic, if it is not already algebraic itself. In other words, we are looking for model categories with an algebraic ``exotic model'', like those from the Franke-style machinery. In the rest of this subsection we explore some more triangulated algebraic examples arising in chromatic homotopy theory.
\begin{exam}\label{ex:irakli}
From~\cite[Corollary 8.3]{patchkoriapstragowski} there are triangulated equivalences between each of the following model categories and an algebraic model category. Note that these are all corollaries from the same general theorem, which gives a triangulated equivalence between $\Ho(R\Mod)$ and $\mathsf{D}(\pi_*(R))$ under certain conditions on $\pi_*(R)$ related to sparseness and global dimension. Since these have different behaviour with regards to the action of $\Ho(\Sp)$, we split up these examples into three types.
\begin{enumerate}
\item $E(1)$-local examples:
\begin{itemize}
\item $E(1)$-modules for $p>2$, $\pi_*(E(1)) = \bbZ_{(p)}[v_1,v_1^{-1}]$, $|v_1|=2p-2$;
\item $KO_{(p)}$-modules for $p>2$, $\pi_*(KO_{(p)})= \mathbb{Z}_{(p)}[v,v^{-1}]$, $|v|=4$;
\item $KU_{(p)}$-modules for $p>2$, $\pi_*(KU_{(p)})=\mathbb{Z}_{(p)}[\beta, \beta^{-1}]$, $|\beta|=2$.
\end{itemize}
For a ring spectrum $R$, the homotopy mapping spectra of $R$-modules are of course $R$-modules themselves and as such, they are also $R$-local spectra \cite[Proposition 1.17(a)]{ravenel1984localization}. A spectrum is $KU_{(p)}$-local if and only if it is $KO_{(p)}$-local if and only if it is $E(1)$-local~\cite[Theorem 8.14]{ravenel1984localization}. As such in each of these examples the homotopy mapping spectra are $E(1)$-local, and hence the category has trivial $\Ho(\Sp)$-action by \cref{thm:klocalcohen} since $\alpha_1$ acts trivially for degree reasons.
\item Examples via nilpotence:
\begin{itemize}
\item modules over connective Morava-$K$-theory $k(n)$ for $0 < n < \infty$ with $p^n > 2$, where $k(n)_*=\mathbb{F}_p[v_n]$, $|v_n|=2p^n-2$.
\end{itemize}
We claim that the set $\{k(n)\}_{0 \leq n \leq \infty}$ detects nilpotence and therefore each $k(n)\Mod$ has trivial $\Ho(\Sp)$-action by \cref{lem:nilpotence}. In order to show this, we will prove that a set $\{R_i\}$ of ring spectra detects nilpotence if for all $0 \leq n \leq \infty$, there exists an $i$ such that $K(n)_*(R_i) \neq 0$. The argument for this is a slight modification of that of~\cite[Proof of Corollary 5]{hopkins_smith}. Let $f\colon \Sigma^k A \to A$ be a map of finite spectra. Since $K(n)$ is a field, $K(n) \wedge R_i$ is a wedge of suspensions of $K(n)$, and therefore if $R_i \wedge^L f = 0$ for all $i$ it follows that $K(n) \wedge^L f = 0$ for all $n$. Since $\{K(n)\}_{0 \leq n \leq \infty}$ detects nilpotence by~\cite[Theorem 3]{hopkins_smith} it follows that $f$ is nilpotent, and hence that $\{R_i\}$ detects nilpotence. Since $k(n) \wedge K(n) \not\simeq 0$~\cite[Theorem 2.1(e)]{ravenel1984localization}, the previous criterion coupled with \cref{lem:nilpotence} shows that $k(n)\Mod$ has trivial action.
\item Other examples: 
\begin{itemize}
\item  $E(n)$-modules for $2p-n>3$, where $\pi_*(E(n)) = \bbZ_{(p)}[v_1,\ldots ,v_{n-1}, v_n, v_n^{-1}]$, $|v_i| = 2p^i-2$;
\item modules over the truncated Brown-Peterson spectrum $BP\langle n\rangle$ for $2p-n>4$, \newline $BP\langle n\rangle_*= \mathbb{Z}_{(p)}[v_1, \cdots , v_n]$, $|v_i|=2p^i-2$.
\end{itemize}
By \cref{ringcoefficients}, these examples have trivial $\Ho(\Sp)$-action. 
\end{enumerate}
All of the above examples are triangulated algebraic. However, in each case $R$ is not a generalized Eilenberg-Mac Lane spectrum (see~\cite[Appendix A]{patchkoria2012alg} for details), and hence by \cref{cor:Rmodalgebraic} and \cref{prop:dz}, none of the above are algebraic nor have a $\mathsf{D}(\bbZ)$-action.
\end{exam}

\subsection{Complex cobordism}
In this section, we consider the example of the complex cobordism spectrum $MU$, and investigate what kinds of algebraicity $MU\Mod$ enjoys.
\begin{exam}\label{MU}
Since $\{MU\}$ detects nilpotence by~\cite[Theorem 1]{dhs} it follows from \cref{lem:nilpotence} that $\Ho(MU\Mod)$ has trivial $\Ho(\Sp)$-action. (Alternatively, this also follows from \cref{ringcoefficients}.) One sees that $MU\Mod$ is not algebraic by \cref{cor:Rmodalgebraic}; if $MU$ were an $H\bbZ$-algebra then any $MU$-module would be an $H\bbZ$-module by restriction of scalars but this fails for example for $K(n)$, see \cref{exam:moravaK}. One can also see that $MU\Mod$ does not have a $\mathsf{D}(\bbZ)$-action as follows. The homotopy mapping spectrum $\Hom_{MU}(MU, K(n)) \simeq K(n)$ is not an $H\bbZ$-module, and so by \cref{prop:dz} the action of $\Ho(\Sp)$ does not factor over $\mathsf{D}(\bbZ)$.

To the best of the authors' knowledge, whether or not $MU\Mod$ is triangulated algebraic is an open question. Schwede has introduced the $n$-order of a triangulated category as an invariant; the $n$-order of a triangulated algebraic category is infinite~\cite[Theorem 2.1]{schwedealg}, but the converse need not hold. The category $\Ho(MU\Mod)$ has infinite $n$-order~\cite[Example 1.8]{schwedealg}, so this invariant is not conclusive in this example.
\end{exam}

\subsection{Endomorphisms of $H\bbZ$ over $ku$}\label{sec:kuHZ}
In this section we explore an interesting example which shows (amongst other things) that having a $\mathsf{D}(\bbZ)$-action is weaker than being algebraic. 

Let $ku$ denote the connective complex $K$-theory spectrum, and recall that $ku_* = \bbZ[\beta]$ where $\beta$ is the Bott element in degree $2$.  By killing homotopy groups, there is a ring map $ku \to H\bbZ$. Note moreover that $H\bbZ$ is a compact $ku$-module since it is equivalent (as a $ku$-module) to the Koszul spectrum
\[\kos{ku}{\beta} = \mathrm{cofib}(\Sigma^2 ku \xrightarrow{\beta} ku).\] In this example we consider the endomorphism ring spectrum $\E = \mathrm{End}_{ku}(H\bbZ)$. Before we can discuss what types of algebraicity the category of $\E$-modules enjoys we require some preparatory results.

There is a ring map $c\colon ku \to \mathrm{End}_\E(H\bbZ)$ adjoint to the action map of $ku$ on $H\bbZ$, called the \emph{double centralizer.}
\begin{lemma}\label{dccomplete}
The map $c\colon ku \to \mathrm{End}_\E(H\bbZ)$ is an equivalence; in the language of~\cite[\S 4.16]{DGI}, the map $ku \to H\bbZ$ is \emph{dc-complete}.
\end{lemma}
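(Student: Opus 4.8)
The plan is to show that the double centralizer map $c \colon ku \to \mathrm{End}_\E(H\bbZ)$ is an equivalence by exhibiting it as the completion map of $ku$ along the ideal generated by $\beta$, and then observing that $ku$ is already $\beta$-complete. The key input is the general Morita-theoretic machinery relating endomorphism ring spectra of a compact module with double centralizers, together with the description of $H\bbZ$ as the Koszul spectrum $\kos{ku}{\beta}$ recalled above. First I would set up the framework of~\cite{DGI}: since $H\bbZ$ is a compact (hence small) $ku$-module by the Koszul description, the double centralizer $\mathrm{End}_\E(H\bbZ)$ computes the derived completion of $ku$ with respect to the thick subcategory generated by $H\bbZ$, and this thick subcategory is precisely the category of $\beta$-torsion $ku$-modules. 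Thus $\mathrm{End}_\E(H\bbZ) \simeq ku^{\wedge}_\beta$, the derived $\beta$-adic completion, and $c$ is identified with the completion map $ku \to ku^\wedge_\beta$.

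Next I would verify that $ku$ is already derived $\beta$-complete, so that the completion map is an equivalence. Since $ku_* = \bbZ[\beta]$ is Noetherian and $\beta$ is a non-zero-divisor, derived $\beta$-completion of $ku$ is computed on homotopy groups by ordinary $\beta$-adic completion together with a $\lim^1$-term, and one checks $\bbZ[\beta]^\wedge_{(\beta)} = \bbZ[\beta]$ with vanishing derived functors — concretely, $\bigcap_n (\beta^n) = 0$ in $\bbZ[\beta]$ and the inverse system $\bbZ[\beta]/(\beta^n)$ is Mittag–Leffler, and in fact $\varprojlim \bbZ[\beta]/(\beta^n) \cong \bbZ[\beta]$ because $\beta$ has positive degree so in each fixed internal degree the tower stabilizes. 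Hence $ku \to ku^\wedge_\beta$ is a weak equivalence.

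Alternatively, and perhaps more cleanly, I would argue directly that $\mathrm{End}_\E(H\bbZ) \simeq \mathrm{End}_{ku}(\kos{ku}{\beta})$-type computations reduce to showing the Koszul dual of the Koszul dual recovers $ku$; this is the statement that $ku \to H\bbZ$ is dc-complete, which in the conventions of~\cite[\S 4.16]{DGI} is exactly what must be proved. The cleanest route is: $H\bbZ$ generates the same localizing subcategory as the $\beta$-power-torsion modules (being the Koszul complex on the single element $\beta$), so the effective-constructibility and completeness conditions of~\cite{DGI} both hold, and their Theorem on double centralizers applies to give $c$ an equivalence.

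The main obstacle I anticipate is not any single hard computation but rather correctly matching conventions: confirming that $H\bbZ$ really is a compact — not merely small or dualizable — object in $ku\Mod$ (here the explicit finite cell structure $\kos{ku}{\beta}$ settles it), and citing the precise form of the double-centralizer theorem from~\cite{DGI} that requires $H\bbZ$ to build the whole relevant torsion category. Once those hypotheses are in place, the identification of $\mathrm{End}_\E(H\bbZ)$ with $ku^\wedge_\beta$ and the completeness of $ku$ at the positive-degree element $\beta$ are both straightforward.
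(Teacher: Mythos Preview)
Your proposal is correct and follows essentially the same strategy as the paper: identify $\mathrm{End}_\E(H\bbZ)$ with the derived $\beta$-completion of $ku$ (the paper cites~\cite{DG02} and~\cite[\S 6.C]{Greenlees18} for this, where you invoke the DGI framework), and then verify that $ku$ is already $\beta$-complete because $\beta$ has positive degree so $\bbZ[\beta]$ coincides with its $\beta$-adic completion. The only cosmetic difference is that the paper packages the completeness check via the local homology spectral sequence of~\cite{GM95} rather than your direct $\varprojlim/\varprojlim^1$ argument, but the underlying observation is identical.
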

\begin{proof}
 The double centralizer may be identified with the $\beta$-completion $\Lambda_\beta(ku)$ of $ku$~\cite{DG02} (also see~\cite[\S 6.C]{Greenlees18}) so it suffices to show that $ku$ is $\beta$-complete. There is a spectral sequence~\cite[3.3]{GM95}
\[E^2_{s,t} = H_s^{\beta}(ku_*)_t \Longrightarrow \pi_{s+t}(\Lambda_\beta(ku))\]
where $H_*^{\beta}(ku_*)$ denotes the local homology groups of $ku_*$ at the ideal $(\beta)$ in the sense of~\cite{GM92}. Since the coefficient ring $\bbZ$ of $ku_* = \bbZ[\beta]$ is in degree 0 and $\beta$ is in degree 2, one sees that $\bbZ[\beta]$ agrees with the power series ring $\bbZ\llbracket\beta\rrbracket$ by comparing homogeneous parts. As such, $ku_* = \bbZ[\beta]$ is $\beta$-adically complete, and thus the local homology groups are \[H_s^{\beta}(ku_*) = \begin{cases} 
ku_* & s=0 \\
0 & \mathrm{otherwise} \end{cases}\]
by~\cite[Theorem 4.1]{GM92}. Therefore the spectral sequence collapses at the $E^2$-page to show that $ku$ is $\beta$-complete, and hence that $c$ is an equivalence.
\end{proof}

Using the previous lemma, we now show that whilst the endomorphism ring spectrum $\E$ is an $H\bbZ$-module, it is not an $H\bbZ$-algebra. 
\begin{thm}\label{notHZalg}
The endomorphism ring spectrum $\E = \Hom_{ku}(H\bbZ, H\bbZ)$ is an $H\bbZ$-module, but is not weakly equivalent to an $H\bbZ$-algebra as a ring spectrum.
\end{thm}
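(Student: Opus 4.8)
The statement has two halves. First I would show $\E = \Hom_{ku}(H\bbZ,H\bbZ)$ is an $H\bbZ$-module; second, and this is the substantive part, that $\E$ is not weakly equivalent (as a ring spectrum) to an $H\bbZ$-algebra.

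For the first half, the point is that $H\bbZ$ is a $ku$-algebra, so $\E = \Hom_{ku}(H\bbZ,H\bbZ)$ is naturally a module over $\Hom_{ku}(H\bbZ, H\bbZ)$... more usefully, $\E$ receives a ring map from (the opposite of) $\mathrm{End}_{H\bbZ}(H\bbZ) = H\bbZ$ via the unit, since $H\bbZ$ is an $H\bbZ$-$ku$-bimodule. Concretely, the left $H\bbZ$-action on the target $H\bbZ$ makes each mapping spectrum $\Hom_{ku}(H\bbZ, H\bbZ)$ into an $H\bbZ$-module, and this is compatible with the ring structure on the nose; so $\E$ is even an $H\bbZ$-algebra — wait, that would contradict the theorem. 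The subtlety is that the $ku$-linear endomorphisms do \emph{not} automatically commute with the $H\bbZ$-action, since $H\bbZ$ is only a $ku$-module map target, not an $H\bbZ$-module map target. So I must be careful: $\E$ is an $H\bbZ$-\emph{module} (using that $H\bbZ$ is an $H\bbZ$-$ku$-bimodule, the \emph{right} $ku$-structure being used for the mapping object and the \emph{left} $H\bbZ$-structure surviving to give a module structure on $\Hom_{ku}(H\bbZ,H\bbZ)$), but there is no reason the multiplication on $\E$ is $H\bbZ$-linear. Alternatively, and more robustly, I can compute $\pi_*\E$: by the universal coefficient / Koszul dual description, since $H\bbZ \simeq \kos{ku}{\beta}$ as a $ku$-module, $\E \simeq \Hom_{ku}(\kos{ku}{\beta}, H\bbZ)$, and a standard computation gives $\pi_*\E$ as an exterior-type algebra over $\bbZ$ on a class in degree $-1$ (coming from the Bott class), i.e. $\pi_*\E \cong \Lambda_{\bbZ}(x)$ with $|x| = -1$ (possibly with a correction: $\pi_*\E = \bbZ$ in degrees $0$ and $-1$). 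Since these homotopy groups are all free $\bbZ$-modules concentrated so that no positive-degree stable stems can act, one shows $\E$ is a generalized Eilenberg–Mac Lane spectrum, hence an $H\bbZ$-module by \cref{prop:CG}. Actually the cleanest route to "$\E$ is an $H\bbZ$-module": invoke \cref{prop:CG} after checking $\E$ is a wedge of suspensions of $H\bbZ$; and this follows because $H\bbZ$ itself, as a $ku$-module, is built from cells, so $\E = \Hom_{ku}(H\bbZ, H\bbZ)$ is a (homotopy) limit of $H\bbZ$-modules (the target being $H\bbZ$), and such limits of $H\bbZ$-modules are $H\bbZ$-modules.

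For the second half — that $\E$ is not weakly equivalent to an $H\bbZ$-algebra as a ring spectrum — the key is \cref{dccomplete}: the double centralizer map $c \colon ku \to \mathrm{End}_\E(H\bbZ)$ is an equivalence of ring spectra. Now suppose for contradiction that $\E$ were weakly equivalent, as a ring spectrum, to an $H\bbZ$-algebra $R$. Then $\E\Mod \simeq_Q R\Mod$, and under this equivalence $H\bbZ$ (viewed as an $\E$-module via the $ku$-action) corresponds to some $R$-module $M$. By \cref{cor:Rmodalgebraic} (applied to $R$), $R\Mod$ is algebraic, and more importantly every endomorphism ring spectrum $\Hom_R(M,M)$ computed in $R\Mod$ is again an $H\bbZ$-algebra — indeed $\Hom_R(M,M)$ is naturally an $H\bbZ$-algebra because $R$ is, so $\mathrm{End}_\E(H\bbZ) \simeq \Hom_R(M,M)$ would be (weakly equivalent to) an $H\bbZ$-algebra. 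But by \cref{dccomplete}, $\mathrm{End}_\E(H\bbZ) \simeq ku$, and $ku$ is \emph{not} weakly equivalent to an $H\bbZ$-algebra as a ring spectrum: if it were, then by \cref{cor:Rmodalgebraic} $ku\Mod$ would be algebraic, and in particular $ku$ would be a generalized Eilenberg–Mac Lane spectrum by that same corollary — but $ku$ has nonzero $v_1$-periodic action, e.g. $\eta \in \pi_1(\s)$ acts nontrivially on $\pi_*ku$ (or directly: $ku$ supports nontrivial Steenrod-algebra operations / is not a wedge of $H\bbZ$'s since $\pi_* ku$ is concentrated in even degrees but $H\mathbb{F}_2 \wedge ku$ is not the wedge predicted by \cref{prop:CG}). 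This is the contradiction.

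\textbf{The main obstacle.} The delicate point is the transfer of the "$H\bbZ$-algebra" property across the Morita equivalence $\E\Mod \simeq_Q R\Mod$ to the double centralizer $\mathrm{End}_\E(H\bbZ)$. I need that if $R\Mod$ is equivalent to an $H\bbZ$-enriched model category, then the derived endomorphism ring spectrum of any object carries an $H\bbZ$-algebra structure compatible with its ring structure — this is exactly the content of \cref{prop:hEndad} and \cref{prop:algebraicEMLHZalg}: being algebraic is equivalent to $\mathrm{hEnd}$ of a compact generator being $H\bbZ$-algebraic, and more generally $\mathrm{hEnd}$ of \emph{any} bifibrant object in an algebraic model category is $H\bbZ$-algebraic (via $\mathrm{hEnd}_{\mathrm{ad}}$). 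So the argument is: $\E\Mod$ algebraic $\Rightarrow$ $\mathrm{hEnd}(H\bbZ) = \mathrm{End}_\E(H\bbZ)$ is $H\bbZ$-algebraic $\Rightarrow$ $ku$ is $H\bbZ$-algebraic (by \cref{dccomplete}) $\Rightarrow$ $ku$ is a GEM spectrum (by \cref{cor:Rmodalgebraic}) — contradiction. Conversely, if $\E$ were weakly equivalent to an $H\bbZ$-algebra as a ring spectrum, then $\E\Mod$ is algebraic by \cref{prop:algebraicEMLHZalg}, so the chain applies. The only genuinely new verification needed beyond quoting earlier results is that $ku$ is not a generalized Eilenberg–Mac Lane spectrum, which is standard (e.g. because $\pi_*ku$ is a polynomial ring, not an exterior/trivial $\pi_*\s$-module; or because $H\mathbb{F}_p \wedge ku$ is not a wedge of $H\mathbb{F}_p$'s — its homotopy is $\mathbb{F}_p[\beta]$ but the $ku$-module structure over $H\mathbb{F}_p$ is nontrivial).
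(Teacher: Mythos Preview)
Your overall strategy is correct and shares the paper's key step: both arguments hinge on \cref{dccomplete}, reducing the question to a statement about $ku$. The differences are in the surrounding scaffolding and the final contradiction. For the first half, the paper simply notes that the multiplication $H\bbZ \wedge_{ku} H\bbZ \to H\bbZ$ is adjoint to a ring map $H\bbZ \to \E$, giving the $H\bbZ$-module structure by restriction; you eventually arrive at this same map after some detours. For the second half, the paper argues directly: if $\E$ were an $H\bbZ$-algebra then $\mathrm{End}_\E(H\bbZ)$ would be one too (endomorphism objects in an $H\bbZ$-linear category are $H\bbZ$-algebras), so by \cref{dccomplete} the unit $\s \to ku$ would factor through $H\bbZ$; they then compute $[H\bbZ, ku]_0 = 0$ (via $KU_*(H\bbZ) \cong \pi_*(H\mathbb{Q} \wedge KU)$ and a universal coefficient sequence), giving the contradiction. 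Your route to ``$\mathrm{End}_\E(H\bbZ)$ is an $H\bbZ$-algebra'' via \cref{prop:hEndad} and algebraic model categories is valid but heavier than this direct observation. Your final contradiction---$ku$ would be a generalized Eilenberg--Mac~Lane spectrum---is also correct in spirit, but one of your proposed justifications is wrong: $\eta$ maps to zero under the unit $\s \to ku$ since $\pi_1(ku) = 0$, so it does act trivially. A clean way to see $ku$ is not a GEM is that $K(1) \wedge H\bbZ \simeq 0$ while $K(1) \wedge ku \not\simeq 0$; the paper's $[H\bbZ, ku]_0 = 0$ computation is arguably more elementary and self-contained.
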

\begin{proof}
There is a ring map $H\bbZ \to \E$ which is adjoint to the map $H\bbZ \wedge_{ku} H\bbZ \to H\bbZ$. This gives $\E$ the structure of an $H\bbZ$-module by restriction.

We now suppose that $\E$ is an $H\bbZ$-algebra and produce a contradiction. If $\E$ were an $H\bbZ$-algebra, then $\mathrm{End}_\E(H\bbZ)$ would also be an $H\bbZ$-algebra. If this were the case, then we would have a commutative diagram
\[
\begin{tikzcd}
\s \arrow[r, "\iota"] \arrow[d] & ku \arrow[d, "c"] \\
H\bbZ \arrow[r] &  \mathrm{End}_\E(H\bbZ) 
\end{tikzcd}
\]
since $\s$ is initial. As $c$ is an equivalence by \cref{dccomplete}, this would mean that the unit map of $ku$ factors over $H\bbZ$, which is false as $[H\bbZ, ku]$ is zero in degree zero. This is well-known but for completeness we recall the argument in \cref{lem:hzku} below.
As such, $\E$ cannot be weakly equivalent to an $H\bbZ$-algebra as a ring spectrum. 
\end{proof}

We now turn to relating the above observations to algebraicity statements.
\begin{exam}\label{ex:ku}
Consider the $H\bbZ$-cellularization of $ku$-modules which we denote by \linebreak $\mathrm{Cell}_{H\bbZ}(ku\Mod)$. This is sometimes called the right Bousfield localization at $H\bbZ$ and denoted by $R_{H\bbZ}(ku\Mod)$. The homotopy category of this model category is the localizing subcategory $\mathrm{Loc}_{ku}(H\bbZ)$ of $ku$-modules generated by $H\bbZ$, and $H\bbZ$ is a compact generator for it~\cite[Corollary 2.6]{GreenleesShipley13}. By Morita theory we have a Quillen equivalence $$\mathrm{Cell}_{H\bbZ}(ku\Mod) \simeq_Q \rmod\E,$$ see~\cite[Theorem 2.1]{DG02} and~\cite[Theorem 8.7]{barnes_roitzheim_stable}. Since $\E$ is spectral with enrichment given by $\Hom_\E(-,-)$, we have $\mathrm{hEnd}(\E) = \E$. Since $\E$ is not weakly equivalent to an $H\bbZ$-algebra as a ring spectrum by \cref{notHZalg}, we see that $\mathrm{Cell}_{H\bbZ}(ku\Mod)$ and $\rmod\E$ are not algebraic  by \cref{prop:algebraicEMLHZalg}. 


We now show that $\rmod\E$ (and hence $\mathrm{Cell}_{H\bbZ}(ku\Mod)$) has a $\mathsf{D}(\bbZ)$-action. There exists a ring map $\theta\colon H\bbZ \to \E$ adjoint to the map $H\bbZ \wedge_{ku} H\bbZ \to H\bbZ.$ The category $\Ho(\rmod\E)$ is enriched, tensored and cotensored over itself, and hence is enriched, tensored and cotensored over $\mathsf{D}(\bbZ)$ via extension and restriction of scalars along the ring map $\theta$, see~\cite[3.7.11]{Riehl} for instance. Therefore $\rmod\E$ has a $\mathsf{D}(\bbZ)$-action. This shows that the converse to \cref{cor:dz} is false, because we have found a model category with $\mathsf{D}(\bbZ)$-action which is not algebraic. Furthermore, we can also see from \cref{cor:dz_trivial} that $\rmod\E$ has a trivial $\Ho(\Sp)$-action.  

We have also computed the homotopy groups of $\E$, but they do not satisfy the hypotheses of~\cite{patchkoria2012alg, patchkoria2017derived, patchkoria2017exotic, patchkoriapstragowski} regarding global dimension and sparseness, so we cannot draw any conclusions about whether or not $\rmod\E$ is triangulated algebraic at this stage.

We finish this example with a lemma we needed for \cref{notHZalg}.

\begin{lemma}\label{lem:hzku}
The group $[H\bbZ, ku]_0$ is trivial.
\end{lemma}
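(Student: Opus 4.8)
The plan is to compute $[H\bbZ, ku]_0 = [H\bbZ, ku]^{\mathrm{Ho}(\Sp)}$ directly. First I would note that maps out of $H\bbZ$ are controlled by the Postnikov/cellular structure of $H\bbZ$ together with the homotopy groups of $ku$; concretely, since $\pi_*(ku) = \bbZ[\beta]$ is concentrated in non-negative even degrees, and $H\bbZ$ is connective with $\pi_0 = \bbZ$, one can try to build a map $H\bbZ \to ku$ by obstruction theory against the cell structure of $H\bbZ$. The first cell of $H\bbZ$ is $\s$, and the restriction of any map $H\bbZ \to ku$ to this bottom cell is an integer multiple of the unit $\iota\colon \s \to ku$. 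So the crux is to show that no nonzero multiple of $\iota$ extends over $H\bbZ$, equivalently that $\iota$ is not "divisible enough" — more precisely that the composite $\s \to H\bbZ \to ku$ being the unit forces a contradiction.

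The cleanest route is via the Atiyah--Hirzebruch (or the $H\bbZ$-based Adams) spectral sequence computing $ku^0(H\bbZ) = [H\bbZ, ku]_0$. The $E_2$-page is $H^s(H\bbZ; \pi_{-s+t}ku)$ (cohomology of the spectrum $H\bbZ$, i.e.\ the dual Steenrod-type computation), and the relevant stem is $t - s = 0$. Since $\pi_* ku$ is torsion-free and the mod-$p$ cohomology $H^*(H\bbZ; \bbF_p)$ is the quotient $\mathcal{A}/\mathcal{A}(Q_0)$ of the Steenrod algebra, the contributions in total degree $0$ come only from the bottom class, and one checks that the unit class in $H^0(H\bbZ;\pi_0 ku) = \bbZ$ supports a nontrivial differential (or is hit), reflecting the fact that $H\bbZ$ has nontrivial $k$-invariants detected by $ku$. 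Alternatively — and I think this is the slicker argument that the authors likely use — rationalize and $p$-complete separately: rationally $H\bbZ$ and $ku$ both become wedges of rational Eilenberg--Mac Lane spectra and $[H\bbQ, ku_\bbQ]_0 = \bbQ \neq 0$, so the rational statement is \emph{not} the obstruction; the torsion must come in. So instead one argues $p$-locally that the unit $\s_{(p)} \to ku_{(p)}$ does not factor through $H\bbZ_{(p)}$ because $H\bbZ_{(p)} \wedge_{\s} ku_{(p)}$-module considerations, or a $v_1$-periodicity/Bott element argument, force any such factorization to be Bott-torsion while the unit is not.

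Concretely the argument I would write: suppose $f\colon H\bbZ \to ku$ has $f|_{\s} = n \cdot \iota$ for some $n \in \bbZ$. Smash with the mod-$p$ Moore spectrum, or better, consider the composite $H\bbZ \xrightarrow{f} ku \to H\bbZ$ (using the truncation $ku \to H\bbZ$); this composite restricted to the bottom cell is $n \cdot \mathrm{id}$, so on $\pi_0$ it is multiplication by $n$, hence if $n$ is a unit the composite is a self-equivalence and $H\bbZ$ is a retract of $ku$, which is false (e.g.\ $ku$ has nontrivial $\pi_2$ but more to the point $ku$ is not an $H\bbZ$-module since $H\bbF_p \wedge ku \neq 0$ but is not a sum of suspensions of $H\bbF_p$ in the right way — one can invoke that $ku$ is not a generalized Eilenberg--Mac Lane spectrum, which follows because $\mathrm{Sq}^2$ or $Q_1$ acts nontrivially on $H^*(ku;\bbF_p)$). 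This kills the case $n$ a unit; for general $n$ one needs that $\iota$ is not divisible by $p$ in $[H\bbZ, ku]_0 \otimes \bbZ_{(p)}$ after the factorization, which reduces to showing $[H\bbZ, ku]_0$ is torsion-free (clear, as $\pi_* ku$ torsion-free and $H\bbZ$ built from $\s$-cells in a way making the AHSS torsion-free in degree $0$) and that its image in $[\s, ku]_0 = \bbZ$ under restriction to the bottom cell is a \emph{proper} subgroup — indeed $0$.

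The main obstacle is pinning down that last claim — that the restriction-to-bottom-cell map $[H\bbZ, ku]_0 \to [\s, ku]_0 = \bbZ$ is zero — rather than merely not surjective onto the units. I expect this is where one genuinely needs the cohomology-operation input: the first $k$-invariant of the Postnikov tower of $ku$ past its bottom two stages, or equivalently the attaching map of the $3$-cell in a minimal cell structure for $H\bbZ$ (which is detected by $\eta$ or by a secondary operation), obstructs extending $\iota\colon \s \to ku$ over the $2$-skeleton of $H\bbZ$ because $\eta \cdot \iota \neq 0$ in $\pi_1 ku = 0$ — wait, $\pi_1 ku = 0$, so that particular obstruction vanishes; the real obstruction is higher up and is exactly the Bott-periodicity phenomenon. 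I would therefore present the argument as: $H\bbZ \to ku$ factoring the unit would make $ku$ an $H\bbZ$-module by the double-centralizer/retract argument above, contradicting that $ku$ is not a generalized Eilenberg--Mac Lane spectrum (by \cref{prop:CG}, as $H\bbF_p \wedge ku$ is not a wedge of suspensions of $H\bbF_p$ with matching homotopy — equivalently $Q_1$ acts nontrivially on $H^*(ku;\bbF_p) = \mathcal{A}/\mathcal{A}(Q_0,Q_1) \cdot(\ldots)$, wait that is $BP$; for $ku$ it is $\mathcal{A}/\!/E(Q_0,Q_1)$ so $Q_1 \neq 0$ there too) — and then separately observe torsion-freeness to handle non-unit multiples, or simply note that the set of $n$ for which $n\iota$ extends is an ideal of $\bbZ$ not containing $1$, hence contained in $p\bbZ$ for all $p$, hence $\{0\}$.
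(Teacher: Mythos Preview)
Your proposal never settles into a complete argument, and the line you say you would present has two genuine gaps. First, the retract/ideal strategy only analyses the \emph{image} of the restriction map $r\colon [H\bbZ,ku]_0 \to [\s,ku]_0 = \bbZ$; even granting that this image is zero, nothing you wrote controls $\ker(r)$, i.e.\ maps $H\bbZ \to ku$ that vanish on the bottom cell, so you have not computed the whole group. Second, the inference ``an ideal of $\bbZ$ not containing $1$, hence contained in $p\bbZ$ for all $p$'' is false as stated ($2\bbZ$ is a counterexample). What you presumably intend is to run the retract argument $p$-locally for each prime and conclude that $H\bbZ_{(p)}$ is not a retract of $ku_{(p)}$; that can indeed be done via the action of $Q_1$ on mod-$p$ cohomology, but you have only gestured at it. Your alternative claim that a spectrum-level factorisation of the unit through $H\bbZ$ would make $ku$ an $H\bbZ$-module is also unjustified: a map of spectra does not produce a module structure.

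The paper's proof is entirely different and avoids all cell-by-cell obstruction analysis. Since $ku$ is the connective cover of $KU$ and $H\bbZ$ is connective, the adjunction gives $[H\bbZ, ku]_0 = [H\bbZ, KU]_0 = KU^0(H\bbZ)$. The universal coefficient sequence for $KU$ then reads
\[
0 \longrightarrow \Ext^1_{\bbZ}(KU_{-1}(H\bbZ),\bbZ) \longrightarrow KU^0(H\bbZ) \longrightarrow \Hom_{\bbZ}(KU_0(H\bbZ),\bbZ) \longrightarrow 0,
\]
so one only needs $KU_*(H\bbZ)$. Because $L_{KU}H\bbZ \simeq H\bbQ$, this is $\pi_*(KU \wedge H\bbQ)$, namely $\bbQ$ in even degrees and $0$ in odd degrees. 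Hence the $\Ext$ term vanishes and the $\Hom$ term is $\Hom_{\bbZ}(\bbQ,\bbZ)=0$. The decisive input you were missing is that $KU_*(H\bbZ)$ is rational; this single fact kills the entire group at once, with no need to chase extensions over a cell structure of $H\bbZ$.
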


\begin{proof}
Firstly, we note that the connective cover functor is right adjoint to the inclusion of the full category of connective spectra into spectra. Thus, 
\[
[H\bbZ, ku]_* = [H\bbZ, KU]_*=KU^*(H\bbZ),
\]
which is of course 2-periodic. There is a short exact sequence
\[
0 \rightarrow \mathrm{Ext}^1_{\bbZ}(KU_{-1}(H\bbZ), \bbZ) \rightarrow KU^0 (H\bbZ) \rightarrow \Hom_{\bbZ}(KU_0(H\bbZ), \bbZ) \rightarrow 0,
\]
see \cite{HeardStojanoska14}, so we need to calculate $KU_*(H\bbZ)$. We note that $L_{KU}H\bbZ= H\mathbb{Q}$ \cite[Proposition 5.2]{Gutierrez}, so
\[
KU_*(H\bbZ)= KU_*(L_{KU}H\bbZ) = \pi_*(H\mathbb{Q} \wedge KU),
\]
which is $\mathbb{Q}$ in even degrees and zero in odd degrees. Putting this into the previous short exact sequence yields that $[H\bbZ, KU]_{\mathrm{even}}= \Hom_{\bbZ}(\mathbb{Q}, \bbZ)=0$  and $[H\bbZ, KU]_\mathrm{odd}= \mathrm{Ext}^1_{\bbZ}(\mathbb{Q}, \bbZ).$
\end{proof}

\end{exam}

\subsection{Summary of our findings}
In this subsection we summarize the main findings of this paper. \cref{fig:summary} shows a summary of how the different notations of algebraicity relate, and \cref{table:examples} recaps the properties of the various examples studied. \begin{figure}[H]
\centering
\begin{tikzpicture}
        \node [decision, yshift=5em] (Algebraic) {Algebraic};
    \node [block, below left of=Algebraic, xshift=-5em] (DZ) {$\mathsf{D}(\mathbb{Z})$-action};
    \node [block, below right of=Algebraic, xshift=5em] (tria) {Triangulated algebraic};
    \node[draw,
    decision,
    below=3cm of Algebraic,
    minimum width=2.5cm,
    minimum height=1cm,] (Trivial action) {Trivial action};
    \draw [vecArrow,red] (DZ) to[]+ (0,1)  |-  node[pos=0.25,fill=white,inner sep=1pt]{(\ref{ex:ku})} node[pos=0.75,fill=white,inner sep=2pt]{\cross} (Algebraic);
    \draw [vecArrow, red] (tria) to[]+ (0,1)  |-  node[pos=0.2,fill=white,inner sep=1pt]{(\ref{exam:moravaK})} node[pos=0.75,fill=white,inner sep=2pt]{\cross} (Algebraic);

   \draw [vecArrow] (Algebraic) -- (DZ)node[pos=0.5,fill=white,inner sep=1pt]{(\ref{cor:dz})};
      \draw [vecArrow] (Algebraic) -- (tria)node[pos=0.5,fill=white,inner sep=1pt]{(\ref{prop:alg_triang})};

        \draw [vecArrow] (DZ) -- (Trivial action)node[pos=0.45,fill=white,inner sep=1pt]{(\ref{cor:dz_trivial})};
   \draw [vecArrow,red] (tria) -- (Trivial action) node[pos=0.35,fill=white,inner sep=1pt]{(\ref{ex:L1Sp})} node[pos=0.65,fill=white,inner sep=2pt]{\cross};

    \draw [vecArrow, red] (Trivial action) -| node[pos=0.25,fill=white,inner sep=0]{(\ref{exam:moravaK})} node[pos=0.75,fill=white,inner sep=1pt]{\cross} (DZ);
   \draw [vecArrow, dashed]  (Trivial action) -| node[pos=0.25,fill=white,inner sep=1pt]{\textbf{?}} (tria);
      \draw [vecArrow,red] (Trivial action) -- (Algebraic) node[pos=0.25,fill=white,inner sep=1pt]{(\ref{exam:moravaK})} node[pos=0.5,fill=white,inner sep=4pt]{} node[pos=0.75,fill=white,inner sep=2pt]{\cross};
          \draw [vecArrow,red] (tria) -- (DZ) node[pos=0.25,fill=white,inner sep=0]{(\ref{exam:moravaK})} node[pos=0.75,fill=white,inner sep=2pt]{\cross};
\end{tikzpicture}
\caption{Summary of the relations between different notions of algebraicity. The implications which hold are in black, whereas those that fail are in red.}
\label{fig:summary}
\end{figure}

It is hard to imagine that the implication from trivial action to triangulated algebraic does hold, but the authors could not find a counterexample at the moment. The same applies to the implication from $\mathsf{D}(\mathbb{Z})$-action to triangulated algebraic. Of course, if the dotted implication in the diagram does hold, then $\mathsf{D}(\mathbb{Z})$-action would imply triangulated algebraic. 

\setlength{\tabcolsep}{0.2em}
\renewcommand{\arraystretch}{1.5}
\begin{table}[H]
\begin{tabular}{m{0.28\linewidth}|m{0.11\linewidth}|m{0.1\linewidth}|m{0.14\linewidth}|m{0.09\linewidth}|m{0.15\linewidth}|c}
\centering Category & \centering Algebraic & \centering $\mathsf{D}(\bbZ)$-action & \centering Triangulated algebraic & \centering Trivial action & \centering Essentially trivial action & Ref. \\
\hline
\centering $L_1\Sp$ ($p>2$)& \centering \cross & \centering \cross & \centering \tick & \centering \cross & \centering \cross & \ref{ex:L1Sp} \\
\hline
\centering Franke's exotic model \\ for $L_1\Sp$ ($p>2$) & \centering \tick & \centering \tick & \centering \tick & \centering \tick & \centering \tick &  \ref{ex:fr} \\
\hline

\centering General exotic models \\ for $L_1\Sp$ ($p>2$)& \centering \textbf{\textcircled{\tiny?}} & \centering \textbf{\textcircled{\tiny?}} & \centering \tick & \centering \textbf{\textcircled{\tiny?}} & \centering \tick &  \ref{ex:E1exotic} \\
\hline
\centering $K(n)_*\Mod$ & \centering \tick & \centering \tick & \centering \tick & \centering \tick & \centering \tick & \ref{exam:moravaK} \\
\hline
\centering $K(n)\Mod$ ($0 < n < \infty$) & \centering \cross & \centering \cross & \centering \tick & \centering \tick & \centering \tick & \ref{exam:moravaK} \\
\hline

\centering $E(1)\Mod$ ($p>2$) \\
$KO_{(p)}\Mod$ ($p>2$)  \\
$KU_{(p)}\Mod$ ($p>2$)  & \centering \cross & \centering \cross & \centering \tick & \centering \tick& \centering \tick & \ref{ex:irakli} \\
\hline
\centering $k(n)\Mod$ \\
($0 < n < \infty$ and $p^n>2$) & \centering \cross & \centering \cross & \centering \tick & \centering \tick& \centering \tick & \ref{ex:irakli} \\

\hline
\centering $E(n)\Mod$ ($2p-n>3$) \\
$BP\langle n\rangle\Mod$ ($2p-n > 4$) & \centering \cross & \centering \cross & \centering \tick & \centering \tick & \centering \tick & \ref{ex:irakli} \\
\hline

\centering $MU\Mod$ & \centering \cross & \centering \cross & \centering \textbf{\textcircled{\tiny?}} & \centering \tick & \centering \tick &  \ref{MU} \\

\hline
\centering $\mathrm{Cell}_{H\bbZ}(ku\Mod)$ & \centering \cross & \centering \tick & \centering \textbf{\textcircled{\tiny?}} & \centering \tick & \centering \tick & \ref{ex:ku} \\

\end{tabular}
\caption{Summary of examples}
\label{table:examples}
\end{table}

\end{document}